\providecommand{\tabularnewline}{\\}
\numberwithin{equation}{section}
\numberwithin{figure}{section}
\theoremstyle{definition}
      \newtheorem{defn}{\protect\definitionname}
      \newtheorem{defn}{\protect\definitionname}[chapter]
\theoremstyle{plain}
	    \newtheorem{thm}{\protect\theoremname}
      \newtheorem{thm}{\protect\theoremname}[chapter]
\theoremstyle{definition}
      \newtheorem{problem}{\protect\problemname}
      \newtheorem{problem}{\protect\problemname}[chapter]
\theoremstyle{plain}
      \newtheorem{conjecture}{\protect\conjecturename}
      \newtheorem{conjecture}{\protect\conjecturename}[chapter]
\def\ps@pprintTitle{%
 \let\@oddhead\@empty
 \let\@evenhead\@empty
 \def\@oddfoot{}%
 \let\@evenfoot\@oddfoot}
\providecommand{\conjecturename}{Conjecture}
\providecommand{\definitionname}{Definition}
\providecommand{\problemname}{Problem}
\providecommand{\theoremname}{Theorem}
\begin{document}

\begin{frontmatter}{}

\title{Polynomials with symmetric zeros}

\author{R. S. Vieira}

\address{São Paulo State University (Unesp), Faculty of Science and Technology,
Department of Mathematics and Computer Science, Presidente Prudente,
SP, Brazil.}

\ead{email: rs.vieira@unesp.br }
\begin{abstract}
Polynomials whose zeros are symmetric either to the real line or to
the unit circle are very important in mathematics and physics. We
can classify them into three main classes: the self-conjugate polynomials,
whose zeros are symmetric to the real line; the self-inversive polynomials,
whose zeros are symmetric to the unit circle; and the self-reciprocal
polynomials, whose zeros are symmetric by an inversion with respect
to the unit circle followed by a reflection in the real line. Real
self-reciprocal polynomials are simultaneously self-conjugate and
self-inversive so that their zeros are symmetric to both the real
line and the unit circle. In this survey, we present a short review
of these polynomials, focusing on the distribution of their zeros. 
\end{abstract}
\begin{keyword}
Self-inversive polynomials, self-reciprocal polynomials, Pisot and
Salem polynomials, Möbius transformations, knot theory, Bethe equations.
\end{keyword}

\end{frontmatter}{}

\section{Introduction}

In this work, we consider the theory of self-conjugate (SC), self-reciprocal
(SR) and self-inversive (SI) polynomials. These are polynomials whose
zeros are symmetric either to the real line $\mathbb{R}$ or to the
unit circle $\mathbb{S}=\left\{ z\in\mathbb{C}:|z|=1\right\} $. The
basic properties of these polynomials can be found in the books of
Marden \cite{Marden1966}, Milovanovi\'c \& al. \cite{Milovanovic1994},
Sheil-Small \cite{Sheil2002}. Although these polynomials are very
important in both mathematics and physics, it seems that there is
no specific review about them; in this work we present a bird's eye
view to this theory, focusing on the zeros of such polynomials. Other
aspects of the theory (e.g., irreducibility, norms, analytical properties
etc.) are not covered here due to the short space, nonetheless, the
interested reader can check many of the references presented in the
bibliography to this end.

\section{Self-conjugate, Self-reciprocal and Self-inversive polynomials\label{SectionBasics}}

We begin with some definitions:
\begin{defn}
Let $p(z)=p_{0}+p_{1}z+\cdots+p_{n-1}z^{n-1}+p_{n}z^{n}$ be a polynomial
of degree $n$ with complex coefficients. We shall introduce three
polynomials --- namely, the \emph{conjugate polynomial} $\overline{p}(z)$,
the \emph{reciprocal polynomial} $p^{\ast}(z)$ and the \emph{inversive
polynomial} $p^{\dagger}(z)$ ---, that are respectively defined
in terms of $p(z)$ as follows:
\begin{align}
\overline{p}(z) & =\overline{p_{0}}+\overline{p_{1}}z+\cdots+\overline{p_{n-1}}z^{n-1}+\overline{p_{n}}z^{n},\nonumber \\
p^{\ast}(z) & =p_{n}+p_{n-1}z+\cdots+p_{1}z^{n-1}+p_{0}z^{n},\nonumber \\
p^{\dagger}(z) & =\overline{p_{n}}+\overline{p_{n-1}}z+\cdots+\overline{p_{1}}z^{n-1}+\overline{p_{0}}z^{n},\label{self}
\end{align}
 where the bar means complex conjugation. Notice that the conjugate,
reciprocal and inversive polynomials can also be defined without making
reference to the coefficients of $p(z)$: 
\begin{equation}
\overline{p}(z)=\overline{p(\overline{z})},\qquad p^{\ast}(z)=z^{n}p\left(1/z\right),\qquad p^{\dagger}(z)=z^{n}\overline{p\left(1/\overline{z}\right)}.\label{self2}
\end{equation}
\end{defn}
From these relations we plainly see that, if $\zeta_{1},\ldots,\zeta_{n}$
are the zeros of a complex polynomial $p(z)$ of degree $n$, then,
the zeros of $\overline{p}(z)$ are $\overline{\zeta_{1}},\ldots,\overline{\zeta_{n}}$,
the zeros of $p^{\ast}(z)$ are $1/\zeta_{1},\ldots,1/\zeta_{n}$
and, finally, the zeros of $p^{\dagger}(z)$ are $1/\overline{\zeta_{1}},\ldots,1/\overline{\zeta_{n}}$.
Thus, if $p(z)$ has $k$ zeros on $\mathbb{R}$, $l$ zeros on the
upper half-plane $\mathbb{C}^{+}=\left\{ z\in\mathbb{C}:\mathrm{Im}(z)>0\right\} $
and $m$ zeros in the lower half-plane $\mathbb{C}^{-}=\left\{ z\in\mathbb{C}:\mathrm{Im}(z)<0\right\} $
so that $k+l+m=n$, then $\overline{p}(z)$ will have the same number
$k$ of zeros on $\mathbb{R}$, $l$ zeros in $\mathbb{C}^{-}$ and
$m$ zeros in $\mathbb{C}^{+}$. Similarly, if $p(z)$ has $k$ zeros
on $\mathbb{S}$, $l$ zeros inside $\mathbb{S}$ and $m$ zeros outside
$\mathbb{S}$, so that $k+l+m=n$, then both $p^{\ast}(z)$ as $p^{\dagger}(z)$
will have the same number $k$ of zeros on $\mathbb{S}$, $l$ zeros
outside $\mathbb{S}$ and $m$ zeros inside $\mathbb{S}$.

These properties encourage us to introduce the following classes of
polynomials:
\begin{defn}
\label{selfies} A complex polynomial $p(z)$ is called\footnote{The reader should be aware that there is no standard in naming these
polynomials. For instance, what we call here self-inversive polynomials
are sometimes called self-reciprocal polynomials. What we mean positive
self-reciprocal polynomials are usually just called self-reciprocal
or yet palindrome polynomials (because their coefficients are the
same whether they are read from forwards or backwards), as well as,
negative self-reciprocal polynomials are usually called skew-reciprocal,
anti-reciprocal or yet anti-palindrome polynomials.} \emph{self-conjugate} (SC), \emph{self-reciprocal} (SR) or \emph{self-inversive}
(SI) if, for any zero $\zeta$ of $p(z)$, the complex-conjugate $\overline{\zeta}$,
the reciprocal $1/\zeta$ or the reciprocal of the complex-conjugate
$1/\overline{\zeta}$ is also a zero of $p(z)$, respectively.
\end{defn}
Thus, the zeros of any SC polynomial are all symmetric to the real
line $\mathbb{R}$, while the zeros of the any SI polynomial are symmetric
to the unit circle $\mathbb{S}$. The zeros of any SR polynomial are
obtained by an inversion with respect to the unit circle followed
by a reflection in the real line. From this we can establish the following:
\begin{thm}
If $p(z)$ is an SC polynomial of odd degree, then it necessarily
has at least one zero on $\mathbb{R}$. Similarly, if $p(z)$ is an
SR or SI polynomial of odd degree, then it necessarily has at least
one zero on $\mathbb{S}$.
\end{thm}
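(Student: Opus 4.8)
The plan is to treat all three cases by a single parity argument, observing that each class corresponds to an involution acting on the multiset of zeros of $p$ whose fixed points are exactly the points of the relevant symmetry locus. The whole theorem then reduces to counting orbits modulo the parity of $n$.

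First I would make the involution explicit in each case. For an SC polynomial the symmetry is $\zeta\mapsto\overline{\zeta}$, whose fixed points are precisely the real numbers, since $\overline{\zeta}=\zeta$ iff $\zeta\in\mathbb{R}$. For an SI polynomial it is $\zeta\mapsto 1/\overline{\zeta}$, and here $1/\overline{\zeta}=\zeta$ iff $|\zeta|^2=1$, i.e. iff $\zeta\in\mathbb{S}$. For an SR polynomial it is $\zeta\mapsto 1/\zeta$, and $1/\zeta=\zeta$ iff $\zeta^2=1$, i.e. iff $\zeta=\pm1$, which in particular lie on $\mathbb{S}$. In the SI and SR cases I would first record that $0$ cannot be a zero (otherwise $1/\overline{\zeta}$, resp.\ $1/\zeta$, would be undefined), so these maps are genuine involutions of the finite zero set.

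Next comes the key point. By the symmetry of the zeros recorded just before the theorem --- and, crucially, \emph{with multiplicities}, which is the content of the functional identities $\overline{p}=c\,p$, $p^{\dagger}=c\,p$ and $p^{\ast}=c\,p$ that follow from Definition \ref{selfies} --- each involution $\sigma$ permutes the zeros of $p$ and preserves their multiplicities. I would then partition the $n$ zeros, counted with multiplicity, into the orbits of $\sigma$. Any zero $\zeta$ lying off the fixed locus satisfies $\sigma(\zeta)\neq\zeta$ and hence sits in a genuine two-element orbit $\{\zeta,\sigma(\zeta)\}$ whose two members carry equal multiplicity; summing over such orbits shows that the total multiplicity of the zeros lying off the fixed locus is even.

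Finally I would bring in the parity of the degree: since $n$ is odd and the zeros off the fixed locus contribute an even number, the zeros on the fixed locus must contribute an odd number, in particular at least one. Reading this back through the three dictionaries gives a zero on $\mathbb{R}$ for SC, a zero on $\mathbb{S}$ for SI, and a zero at $+1$ or $-1$ (hence on $\mathbb{S}$) for SR. The one step that genuinely needs care, and which I expect to be the main obstacle, is the multiplicity-preservation statement: a purely set-theoretic reading of ``symmetric zeros'' is not sufficient, as the example $(z-i)^2(z+i)$ shows --- its zero set is symmetric under conjugation and its degree is odd, yet it has no real zero. It is exactly the multiset version, equivalently the functional relation $\sigma$ preserves multiplicities, that makes the parity count valid.
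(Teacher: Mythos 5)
Your argument is correct and is essentially the paper's own proof: the zeros off the fixed locus of the relevant involution ($\zeta\mapsto\overline{\zeta}$, $\zeta\mapsto 1/\overline{\zeta}$ or $\zeta\mapsto 1/\zeta$) pair up, so odd degree forces at least one fixed zero, which lies on $\mathbb{R}$ in the SC case and on $\mathbb{S}$ (indeed at $\pm1$ in the SR case) otherwise. The extra care you take is a genuine improvement rather than a digression: the paper's one-line proof tacitly uses the multiset version of the symmetry, and your example $(z-i)^{2}(z+i)$ rightly shows that a purely set-theoretic reading of the definition would make the statement false, so the multiplicity-preservation step you isolate is exactly what must be (and can be, via the functional relations $p=\omega\overline{p}$, $p=\omega p^{\ast}$, $p=\omega p^{\dagger}$ established in the following theorem) supplied.
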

\begin{proof}
From Definition \ref{selfies} it follows that the number of non-real
zeros of an SC polynomial $p(z)$ can only occur in (conjugate) pairs;
thus, if $p(z)$ has odd degree, then at least one zero of it must
be real. Similarly, the zeros of $p^{\dagger}(z)$ or $p^{\ast}(z)$
that have modulus different from $1$ can only occur in (inversive
or reciprocal) pairs as well; thus, if $p(z)$ has odd degree then
at least one zero of it must lie on $\mathbb{S}$.
\end{proof}
\begin{thm}
The necessary and sufficient condition for a complex polynomial $p(z)$
to be SC, SR or SI is that there exists a complex number $\omega$
of modulus $1$ so that one of the following relations respectively
holds:
\begin{equation}
p(z)=\omega\overline{p}(z),\qquad p(z)=\omega p^{\ast}(z),\qquad p(z)=\omega p^{\dagger}(z).\label{self3}
\end{equation}
\end{thm}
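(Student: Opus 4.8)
The three equivalences have identical structure, so the plan is to prove them in parallel, carrying out the self-conjugate case in full and indicating the obvious substitutions for the other two. The guiding idea is that each statement reduces to the elementary fact that two polynomials of the same degree with the same zeros, counted with multiplicity, can differ only by a nonzero multiplicative constant. I would read the properties SC, SR and SI as invariance of the full multiset of zeros $\{\zeta_1,\ldots,\zeta_n\}$ under $\zeta\mapsto\overline{\zeta}$, $\zeta\mapsto 1/\zeta$ and $\zeta\mapsto 1/\overline{\zeta}$ respectively, and I would freely use the description, given just after the three transforms were introduced, of the zeros of $\overline{p}$, $p^{\ast}$ and $p^{\dagger}$ as $\{\overline{\zeta_j}\}$, $\{1/\zeta_j\}$ and $\{1/\overline{\zeta_j}\}$.

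For sufficiency I would argue that if $p=\omega\overline{p}$ (resp. $p=\omega p^{\ast}$, $p=\omega p^{\dagger}$) with $\omega\neq 0$, then $p$ and its transform share the same zeros with multiplicity; since those zeros are precisely the conjugates (resp. reciprocals, reciprocal-conjugates) of the zeros of $p$, the required multiset symmetry is immediate, so $p$ is SC (resp. SR, SI). For necessity I would run this backwards: the assumed symmetry says that $p$ and its transform have the same zero multiset, and since both are polynomials of degree exactly $n$ --- here one uses that $\overline{p_n}\neq 0$ always, and that an SR or SI polynomial cannot vanish at the origin, forcing $p_0\neq 0$ so that $p^{\ast}$ and $p^{\dagger}$ also keep degree $n$ --- they must be proportional, giving $p=c\,\overline{p}$ (resp. $p=c\,p^{\ast}$, $p=c\,p^{\dagger}$) for some $c\neq 0$.

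It then remains to show that this constant has modulus $1$, which I expect to obtain for free by applying the defining relation twice at the level of coefficients. In the SC case $p_k=c\,\overline{p_k}$ for every $k$; conjugating gives $\overline{p_k}=\overline{c}\,p_k$, and resubstituting yields $p_k=|c|^2 p_k$, whence $|c|^2=1$. The SI case is identical with $p_k=c\,\overline{p_{n-k}}$, again producing $|c|=1$, while the SR case gives $p_k=c\,p_{n-k}$ and hence $c^2=1$, i.e. $c=\pm 1$. Setting $\omega=c$ then completes each of the three equivalences.

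The only real obstacle I anticipate is the treatment of multiplicities. Symmetry of the zeros as a mere set is genuinely insufficient: for instance $(z-i)(z+i)^2$ has a conjugation-invariant zero set yet is not a constant multiple of its conjugate, so the theorem can hold only under the multiset reading of the definitions. Once that reading is fixed, the ``same zeros $\Rightarrow$ proportional'' passage is automatic, and the remaining bookkeeping --- verifying that the transformed polynomial still has degree $n$, which is exactly where the hypothesis $p_0\neq 0$ for SR and SI polynomials enters --- is routine.
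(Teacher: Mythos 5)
Your proof is correct and rests on essentially the same idea as the paper's: the hypothesis forces $p$ and its transform to be degree-$n$ polynomials with the same zero multiset (using $p_0\neq 0$ in the SR and SI cases), hence proportional, which is exactly what the paper's explicit factorization $p(z)=p_n\prod_k\bigl(z-\overline{\zeta_k}\bigr)$ encodes. The only real divergence is in how $|\omega|=1$ is obtained --- the paper reads $\omega$ off as $p_n/\overline{p_n}$, $p_n/p_0$ or $p_n/\overline{p_0}$ and bounds its modulus via the pairing of the zeros, whereas you iterate the coefficient relation to get $|c|^2=1$ (and, in the SR case, the sharper and in fact always-true conclusion $c=\pm1$) --- and your explicit insistence on reading the definitions at the level of multisets, justified by the counterexample $(z-i)(z+i)^2$, is a point of care that the paper's proof passes over silently.
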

\begin{proof}
It is clear in view of (\ref{self}) and (\ref{self2}) that these
conditions are sufficient. We need to show, therefore, that these
conditions are also necessary. Let us suppose first that $p(z)$ is
SC. Then, for any zero $\zeta$ of $p(z)$ the complex-conjugate number
$\overline{\zeta}$ is also a zero of it. Thus we can write, 
\begin{equation}
p(z)=p_{n}\prod_{k=1}^{n}\left(z-\overline{\zeta_{k}}\right)=p_{n}\prod_{k=1}^{n}\overline{\left(\overline{z}-\zeta_{k}\right)}=\left(p_{n}/\overline{p_{n}}\right)\overline{p(\overline{z})}=\omega\bar{p}(z),
\end{equation}
with $\omega=p_{n}/\overline{p_{n}}$ so that $|\omega|=\left|p_{n}/\overline{p_{n}}\right|=1$.
Now, let us suppose that $p(z)$ is SR. Then, for any zero $\zeta$
of $p(z)$ the reciprocal number $1/\zeta$ is also a zero of it;
thus, 
\begin{equation}
p(z)=p_{n}\prod_{k=1}^{n}\left(z-\frac{1}{\zeta_{k}}\right)=\frac{\left(-1\right)^{n}z^{n}p_{n}}{\zeta_{1}\cdots\zeta_{n}}\prod_{k=1}^{n}\left(\frac{1}{z}-\zeta_{k}\right)=\frac{\left(-1\right)^{n}z^{n}}{\zeta_{1}\cdots\zeta_{n}}p\left(\frac{1}{z}\right)=\omega p^{\ast}(z),
\end{equation}
with $\omega=\left(-1\right)^{n}/\left(\zeta_{1}\ldots\zeta_{n}\right)=p_{n}/p_{0}$;
now, for any zero $\zeta$ of $p(z)$ (which is necessarily different
from zero if $p(z)$ is SR), there will be another zero whose value
is $1/\zeta$ so that $\left|\zeta_{1}\ldots\zeta_{n}\right|=1$,
which implies $|\omega|=1$. The proof for SI polynomials is analogous
and will be concealed; it follows that $\omega=p_{n}/\overline{p_{0}}$
in this case. 
\end{proof}
Now, from (\ref{self}), (\ref{self2}) and (\ref{self3}) we can
conclude that the coefficients of an SC, an SR and an SI polynomial
of degree $n$ satisfy, respectively, the following relations:
\begin{equation}
p_{k}=\omega\overline{p_{k}},\qquad p_{k}=\omega p_{n-k},\qquad p_{k}=\omega\overline{p_{n-k}},\qquad|\omega|=1,\qquad0\leqslant k\leqslant n.
\end{equation}

We highlight that any real polynomial is SC --- in fact, many theorems
which are valid for real polynomials are also valid for, or can be
easily extended to, SC polynomials. 

There also exist polynomials whose zeros are symmetric with respect
to \emph{both} the real line $\mathbb{R}$ and the unit circle $\mathbb{S}$.
A polynomial $p(z)$ with this double symmetry is, at the same time,
SC and SI (and, hence, SR as well). This is only possible if all the
coefficients of $p(z)$ are real, which implies that $\omega=\pm1$.
This suggests the following additional definitions: 
\begin{defn}
A real self-reciprocal polynomial $p(z)$ that satisfies the relation
$p(z)=\omega z^{n}p(1/z)$ will be called a\emph{ positive self-reciprocal
}(PSR)\emph{ }polynomial if $\omega=1$ and a\emph{ negative self-reciprocal}
(NSR) polynomial\emph{ }if $\omega=-1$.
\end{defn}
Thus, the coefficients of any PSR polynomial $p(z)=p_{0}+\cdots+p_{n}z^{n}$
of degree $n$ satisfy the relations $p_{k}=p_{n-k}$ for $0\leqslant k\leqslant n$,
while the coefficients of any NSR polynomial $p(z)$ of degree $n$
satisfy the relations $p_{k}=-p_{n-k}$ for $0\leqslant k\leqslant n$;
this last condition implies that the middle coefficient of an NSR
polynomial of even degree is always zero. 

Some elementary properties of PSR and NSR polynomials are the following:
first, notice that, if $\zeta$ is a zero of any PSR or NSR polynomial
$p(z)$ of degree $n\geqslant4$, then the three complex numbers $1/\zeta$,
$\overline{\zeta}$ and $1/\overline{\zeta}$ are also zeros of $p(z)$.
In particular, the number of zeros of such polynomials which are neither
on $\mathbb{S}$ or $\mathbb{R}$ is always a multiple of $4$. Besides,
any NSR polynomial has $z=1$ as a zero and $p(z)/(z-1)$ is PSR;
further, if $p(z)$ has even degree then $z=-1$ is also a zero of
it and $p(z)/(z^{2}-1)$ is a PSR polynomial of even degree. In a
similar way, any PSR polynomial $p(z)$ of odd degree has $z=-1$
as a zero and $p(z)/(z+1)$ is also PSR. The product of two PSR, or
two NSR, polynomials is PSR, while the product of a PSR polynomial
with an NSR polynomial is NSR. These statements follow directly from
the definitions of such polynomials.

We also mention that any PSR polynomial of even degree (say, $n=2m$)
can be written in the following form: 
\begin{equation}
p(z)=z^{m}\left[p_{0}\left(z^{m}+\frac{1}{z^{m}}\right)+p_{1}\left(z^{m-1}+\frac{1}{z^{m-1}}\right)+\cdots+p_{m-1}\left(z+\frac{1}{z}\right)\right]+p_{m},
\end{equation}
an expression that is obtained by using the relations $p_{k}=p_{2m-k}$,
$0\leqslant k\leqslant2m$ and gathering the terms of $p(z)$ with
the same coefficients. Furthermore, the expression $Z_{s}(z)=\left(z^{s}+z^{-s}\right)$
for any integer $s$ can be written as a polynomial of degree $s$
in the new variable $x=z+1/z$ (the proof follows easily by induction
over $s$); thus, we can write $p(z)=z^{m}q\left(x\right)$, where
$q(x)=q_{0}+\cdots+q_{m}x^{m}$ is such that the coefficients $q_{0},\ldots,q_{m}$
are certain functions of $p_{0},\ldots,p_{m}$. From this we can state
the following: 
\begin{thm}
Let $p(z)$ be a PSR polynomial of even degree $n=2m$. For each pair
$\zeta$ and $1/\zeta$ of self-reciprocal zeros of $p(z)$ that lie
on $\mathbb{S}$, there is a corresponding zero $\xi$ of the polynomial
$q(x)$, as defined above, in the interval $\left[-2,2\right]$ of
the real line.
\end{thm}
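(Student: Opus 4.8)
The plan is to exploit the factorization $p(z)=z^{m}q(x)$ with $x=z+1/z$ that was established immediately above the statement, and to follow a single zero $\zeta\in\mathbb{S}$ of $p(z)$ through the substitution $z\mapsto x$. Since $p$ is already expressed in terms of the variable $x=z+1/z$, the entire content of the theorem should reduce to understanding the image of $\mathbb{S}$ under this map.

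First I would observe that a point $\zeta$ lies on $\mathbb{S}$ precisely when $1/\zeta=\overline{\zeta}$. Hence for such a $\zeta$ the associated value $\xi=\zeta+1/\zeta=\zeta+\overline{\zeta}=2\,\mathrm{Re}(\zeta)$ is automatically real; and since $|\mathrm{Re}(\zeta)|\leqslant|\zeta|=1$, it lies in $[-2,2]$. Writing $\zeta=e^{i\theta}$ makes this transparent, as $\xi=2\cos\theta$. This is the step that pins $\xi$ to the real interval $[-2,2]$, and it is really the only place where the hypothesis $\zeta\in\mathbb{S}$ is used; I regard it as the conceptual crux of the argument, even though it is short.

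Next I would show that $\xi$ is in fact a zero of $q$. Evaluating the identity $p(z)=z^{m}q(x)$ at $z=\zeta$ gives $0=p(\zeta)=\zeta^{m}q(\xi)$. Because $\zeta\in\mathbb{S}$ we have $\zeta\neq0$, so $\zeta^{m}\neq0$, and therefore $q(\xi)=0$, as desired.

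Finally I would address the phrase \emph{for each pair}. Since both $\zeta$ and $1/\zeta$ produce the same value $\xi=\zeta+1/\zeta$, the reciprocal pair $\{\zeta,1/\zeta\}$ collapses to a single $\xi$; conversely, the equation $z+1/z=\xi$ is the quadratic $z^{2}-\xi z+1=0$, whose two roots multiply to $1$, so distinct pairs yield distinct values of $\xi$, and each $\xi\in[-2,2]$ comes from exactly one reciprocal pair on $\mathbb{S}$. I do not anticipate a serious obstacle here; the only points demanding a little care are the two-to-one nature of $z\mapsto z+1/z$ away from the endpoints and the degenerate cases $\xi=\pm2$, which correspond to the single zeros $\zeta=\pm1$, where the two members of the pair coincide.
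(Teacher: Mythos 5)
Your proposal is correct and follows essentially the same route as the paper's own proof: write $\zeta=e^{i\theta}$ so that $\xi=\zeta+1/\zeta=2\cos\theta\in[-2,2]$, use the identity $p(z)=z^{m}q(z+1/z)$ to conclude $q(\xi)=0$, and observe that $\zeta$ and $1/\zeta$ map to the same $\xi$. The extra remarks on injectivity of the pair-to-$\xi$ correspondence and the degenerate cases $\xi=\pm2$ go slightly beyond what the paper records but do not change the argument.
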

\begin{proof}
For each zero $\zeta$ of $p(x)$ that lie on $S$, write $\zeta=\mathrm{e}^{i\theta}$
for some $\theta\in\mathbb{R}$. Besides, as $q(x)=q(z+1/z)=p(z)/z^{m}$,
it follows that $\xi=\zeta+1/\zeta=2\cos\theta$ will be a zero of
$q(x)$. This shows us that $\xi$ is limited to the interval $\left[-2,2\right]$
of the real line. Finally, notice that the reciprocal zero $1/\zeta$
of $p(z)$ is mapped to the same zero $\xi$ of $q(x)$.
\end{proof}
Finally, remembering that the Chebyshev polynomials of first kind,
$T_{n}(z)$, are defined by the formula $T_{n}\left[\tfrac{1}{2}\left(z+z^{-1}\right)\right]=\tfrac{1}{2}\left(z^{n}+z^{-n}\right)$
for $z\in\mathbb{C}$, it follows as well that $q(x)$, and hence
any PSR polynomial, can be written as a linear combination of Chebyshev
polynomials: 
\begin{equation}
q(x)=2\left[p_{0}T_{m}(x)+p_{1}T_{m-1}(x)+\cdots+p_{m-1}T_{1}(x)+\tfrac{1}{2}p_{m}T_{0}(x)\right].
\end{equation}

\section{How these polynomials are related to each other?\label{SectionRelationship} }

In this section, we shall analyze how SC, SR and SI polynomials are
related to each other. Let us begin with the relationship between
the SR and SI polynomials, which is actually very simple: indeed,
from (\ref{self}), (\ref{self2}) and (\ref{self3}) we can see that
each one is nothing but the conjugate polynomial of the other, that
is,
\begin{equation}
p^{\dagger}(z)=\overline{p^{\ast}}(z)=\overline{p^{\ast}(\overline{z})},\qquad\text{and}\qquad p^{\ast}(z)=\overline{p^{\dagger}}(z)=\overline{p^{\dagger}(\overline{z})}.
\end{equation}
Thus, if $p(z)$ is an SR (SI) polynomial, then $\overline{p}(z)$
will be SI (SR) polynomial. Because of this simple relationship, several
theorems which are valid for SI polynomials are also valid for SR
polynomials and vice versa.

The relationship between SC and SI polynomials is not so easy to perceive.
A way of revealing their connection is to make use of a suitable pair
of \emph{Möbius transformations}, that maps the unit circle onto the
real line and vice versa, which is often called \emph{Cayley transformations}
, defined through the formulas: 
\begin{equation}
M(z)=\left(z-i\right)/\left(z+i\right),\qquad\text{and}\qquad W(z)=-i\left(z+1\right)/\left(z-1\right).
\end{equation}
This approach was developed in \cite{Vieira2018}, where some algorithms
for counting the number of zeros that a complex polynomial has on
the unit circle were also formulated. 

It is a easy matter to verify that $M(z)$ maps $\mathbb{R}$ onto
$\mathbb{S}$ while $W(z)$ maps $\mathbb{S}$ onto $\mathbb{R}$.
Besides, $M(z)$ maps the upper (lower) half-plane to the interior
(exterior) of $\mathbb{S}$, while $W(z)$ maps the interior (exterior)
of $\mathbb{S}$ onto the upper (lower) half-plane. Notice that $W(z)$
can be thought as the inverse of $M(z)$ in the Riemann sphere $\mathbb{C}_{\infty}=\mathbb{C}\cup\{\infty\}$,
if we further assume that $M(-i)=\infty$, $M(\infty)=1$, $W(1)=\infty$
and $W(\infty)=-i$. 

Given a polynomial $p(z)$ of degree $n$, we define two \emph{Möbius-transformed
polynomials}, namely, 
\begin{equation}
Q(z)=(z+i)^{n}p(M(z)),\qquad\text{and}\qquad T(z)=(z-1)^{n}p(W(z)).
\end{equation}

The following theorem shows us how the zeros of $Q(z)$ and $T(z)$
are related with the zeros of $p(z)$:
\begin{thm}
\label{TheoremZeros}Let $\zeta_{1},\ldots,\zeta_{n}$ denote the
zeros of $p(z)$ and $\eta_{1},\ldots,\eta_{n}$ the respective zeros
of $Q(z)$. Provided $p(1)\neq0$, we have that $\eta_{1}=W(\zeta_{1}),\ldots,\eta_{n}=W(\zeta_{n})$.
Similarly, if $\tau_{1},\ldots\tau_{n}$ are the zeros of $T(z)$,
then we have $\tau_{1}=M(\zeta_{1}),\ldots,\tau_{n}=M(\zeta_{n})$,
provided that $p(-i)\neq0$. 
\end{thm}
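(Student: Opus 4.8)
The plan is to prove the assertion for $Q(z)$ by an explicit factorization and then obtain the assertion for $T(z)$ verbatim, with the roles of $M$ and $W$ interchanged. I would begin from the factored form $p(z)=p_{n}\prod_{k=1}^{n}(z-\zeta_{k})$ supplied by the fundamental theorem of algebra, substitute the argument $M(z)$, and clear denominators with the prefactor $(z+i)^{n}$. The conceptual content is simply that $W$ is the inverse of $M$ on the Riemann sphere $\mathbb{C}_{\infty}$, so that for a finite $z\neq-i$ one has $Q(z)=0\Longleftrightarrow p(M(z))=0\Longleftrightarrow M(z)=\zeta_{k}\Longleftrightarrow z=W(\zeta_{k})$; the factorization below is what turns this into a clean statement about polynomials and degrees.

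The key computation is
\[
Q(z)=(z+i)^{n}p(M(z))=(z+i)^{n}p_{n}\prod_{k=1}^{n}\left(\frac{z-i}{z+i}-\zeta_{k}\right)=p_{n}\prod_{k=1}^{n}\left[(1-\zeta_{k})z-i(1+\zeta_{k})\right],
\]
where each of the $n$ factors carries exactly one power of $(z+i)^{-1}$ that is absorbed by $(z+i)^{n}$, so $Q$ is genuinely a polynomial. Each bracket has degree at most $1$ in $z$; provided $\zeta_{k}\neq1$ it has degree exactly $1$, with unique root $i(1+\zeta_{k})/(1-\zeta_{k})$, and a one-line simplification shows this equals $W(\zeta_{k})=-i(\zeta_{k}+1)/(\zeta_{k}-1)$. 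Hence the zeros of $Q$ are precisely $W(\zeta_{1}),\ldots,W(\zeta_{n})$, which proves the claim once the degree is accounted for.

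The single point requiring care is the hypothesis $p(1)\neq0$. Reading off the leading coefficient of $Q$ from the factorization above gives $p_{n}\prod_{k=1}^{n}(1-\zeta_{k})=p(1)$, so the condition $p(1)\neq0$ is exactly what guarantees $\deg Q=n$; equivalently, it guarantees that no $\zeta_{k}$ equals $1$, so that every image $W(\zeta_{k})$ is finite rather than escaping to $\infty$ (recall $W(1)=\infty$). Were the hypothesis dropped, the offending factor would collapse to the constant $-2i$ and the corresponding zero would be lost at infinity. I would also remark that $z=-i$, the pole of $M$, is never a spurious zero: the same factorization yields $Q(-i)=p_{n}(-2i)^{n}\neq0$.

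Finally, the statement for $T(z)=(z-1)^{n}p(W(z))$ follows by running the identical argument with the substitution $z\mapsto W(z)$. The analogous factorization gives $T(z)=p_{n}\prod_{k=1}^{n}\left[-(i+\zeta_{k})z+(\zeta_{k}-i)\right]$, whose $k$-th root is $(\zeta_{k}-i)/(\zeta_{k}+i)=M(\zeta_{k})$; here the hypothesis $p(-i)\neq0$ plays the symmetric role, ensuring each leading coefficient $-(i+\zeta_{k})$ is nonzero (no $\zeta_{k}=-i$, the point that $M$ sends to $\infty$), so that $\deg T=n$ and all $n$ zeros $M(\zeta_{k})$ are finite. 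I do not expect any serious obstacle beyond this bookkeeping of degrees at the two exceptional points, since once the denominators are cleared the result is immediate from $W=M^{-1}$.
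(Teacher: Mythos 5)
Your proof is correct, but it takes a different route from the paper's. The paper inverts the defining relation: substituting $z=W(w)$ into $Q(z)=(z+i)^{n}p(M(z))$ and using $W(w)+i=-2i/(w-1)$, it obtains $p(w)=c\,(w-1)^{n}Q(W(w))$ for a nonzero constant $c$, evaluates this at each zero $\zeta_{k}$ of $p$, and concludes (since $\zeta_{k}\neq1$) that $Q(W(\zeta_{k}))=0$. You instead start from the full factorization $p(z)=p_{n}\prod_{k}(z-\zeta_{k})$ and clear denominators to exhibit $Q$ explicitly as a product of linear factors, $Q(z)=p_{n}\prod_{k}\bigl[(1-\zeta_{k})z-i(1+\zeta_{k})\bigr]$. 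Your version actually delivers more: the paper's argument only shows that each $W(\zeta_{k})$ \emph{is} a zero of $Q$, and it leaves the degree of $Q$ (and hence the fact that these are \emph{all} its zeros, with the right multiplicities) to the discussion following the theorem, whereas your factorization gives the complete list of zeros at once, identifies the leading coefficient of $Q$ as $p(1)$ --- which is exactly why the hypothesis $p(1)\neq0$ is the right one --- and shows as a bonus that $Q(-i)=p_{n}(-2i)^{n}\neq0$. The paper's inversion argument is shorter and emphasizes the conceptual point that $W=M^{-1}$ on the Riemann sphere; your computation makes the degree bookkeeping at the exceptional points $z=1$ and $z=-i$ fully explicit. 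Both are valid, and your treatment of the $T(z)$ case is the correct mirror image.
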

\begin{proof}
In fact, inverting the expression for $Q(z)$ and evaluating it in
any zero $\zeta_{k}$ of $p(z)$ we get that $p(\zeta_{k})=(-i/2)^{n}(\zeta_{k}-1)^{n}Q(W(\zeta_{k}))=0$
for $0\leqslant k\leqslant n$. Provided that $z=1$ is not a zero
of $p(z)$ we get that $\eta_{k}=W(\zeta_{k})$ is a zero of $Q(z)$.
The proof for the zeros of $T(z)$ is analogous.
\end{proof}
This result also shows that $Q(z)$ and $T(z)$ have the same degree
as $p(z)$ whenever $p(1)\neq0$ or $p(-i)\neq0$, respectively. In
fact, if $p(z)$ has a zero at $z=1$ of multiplicity $m$ then $Q(z)$
will be a polynomial of degree $n-m$, the same being true for $T(z)$
if $p(z)$ has a zero of multiplicity $m$ at $z=-i$. This can be
explained by the fact that the points $z=1$ and $z=-i$ are mapped
to infinity by $W(z)$ and $M(z)$, respectively.$ $

The following theorem shows that the set of SI polynomials are isomorphic
to the set of SC polynomials:
\begin{thm}
\label{Theorem-SI-SC} Let $p(z)$ be an SI polynomial. Then, the
transformed polynomial $Q(z)=(z+i)^{n}p(M(z))$ is an SC polynomial.
Similarly, if $p(z)$ is an SC polynomial, then $T(z)=(z-1)^{n}p(W(z))$
will be an SI polynomial. 
\end{thm}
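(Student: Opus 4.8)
The plan is to use Theorem~\ref{TheoremZeros}, which tells us exactly how the zeros transform under the Cayley maps, together with the characterization of SC and SI polynomials in terms of their zero symmetries (Definition~\ref{selfies}). The key observation is that the M\"obius transformations $M$ and $W$ are conjugate to each other in a way that intertwines the two symmetries: reflection across $\mathbb{S}$ (the map $z\mapsto 1/\overline{z}$) corresponds under $W$ to reflection across $\mathbb{R}$ (the map $z\mapsto\overline{z}$). So the whole proof reduces to checking this one compatibility identity between the maps and the two reflections.

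First I would establish the central lemma relating the reflections. Writing $\sigma(z)=1/\overline{z}$ for inversion in the unit circle and $\kappa(z)=\overline{z}$ for reflection in the real line, I would verify by direct computation that $W(\sigma(z))=\kappa(W(z))$, equivalently $W(1/\overline{z})=\overline{W(z)}$. This is a short calculation with $W(z)=-i(z+1)/(z-1)$: one computes $\overline{W(z)}=i(\overline{z}+1)/(\overline{z}-1)$ and $W(1/\overline{z})=-i(1/\overline{z}+1)/(1/\overline{z}-1)=-i(1+\overline{z})/(1-\overline{z})$, and observes these agree. An analogous identity $M(\kappa(z))=\sigma(M(z))$ holds for the other direction.

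Next I would combine this with Theorem~\ref{TheoremZeros}. Suppose $p(z)$ is SI with zeros $\zeta_1,\ldots,\zeta_n$, so that the set of zeros is invariant under $\zeta\mapsto 1/\overline{\zeta}$. By Theorem~\ref{TheoremZeros}, the zeros of $Q(z)$ are $\eta_k=W(\zeta_k)$. I must show this set is invariant under $\eta\mapsto\overline{\eta}$. Given any $\eta_k=W(\zeta_k)$, its conjugate is $\overline{\eta_k}=\overline{W(\zeta_k)}=W(1/\overline{\zeta_k})$ by the lemma; since $1/\overline{\zeta_k}$ is again a zero of $p$ (by the SI property), $W(1/\overline{\zeta_k})$ is again among the $\eta_j$. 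Hence the zero set of $Q$ is closed under complex conjugation, so $Q$ is SC. The argument for the SC-to-SI direction is symmetric, using $M(\kappa(z))=\sigma(M(z))$ and the zero correspondence $\tau_k=M(\zeta_k)$ for $T$.

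The main obstacle will be the degenerate cases hidden in Theorem~\ref{TheoremZeros}, which requires $p(1)\neq0$ (resp.\ $p(-i)\neq0$) for the zero correspondence to preserve degree; when $p$ vanishes at these exceptional points the transformed polynomial drops in degree because $W$ (resp.\ $M$) sends that point to infinity. I would handle this by noting that for an SI polynomial the zero at $z=1$, if present, lies on $\mathbb{S}$ and is its own image under $\sigma$, so a pole of $W$ there simply removes a real fixed point of the symmetry and does not break the conjugation-invariance of the remaining zeros; the reduced-degree $Q$ is still SC on its own zero set. I would also remark that the map at the level of polynomials (not just zeros) is genuinely an isomorphism, so one could alternatively give a coefficient-free proof by substituting the defining functional equation $p(z)=\omega\,p^{\dagger}(z)$ into $Q(z)=(z+i)^n p(M(z))$ and simplifying to the form $Q(z)=\omega'\,\overline{Q}(z)$; that route avoids the exceptional-point bookkeeping entirely and may in fact be the cleaner presentation, but it trades the transparent geometric picture for a longer algebraic manipulation.
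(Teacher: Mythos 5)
Your proof is correct and follows essentially the same route as the paper's: both invoke Theorem~\ref{TheoremZeros} for the zero correspondence and reduce the claim to the direct computations $W(1/\overline{z})=\overline{W(z)}$ and $M(\overline{z})=1/\overline{M(z)}$. Your extra attention to the degenerate cases $p(1)=0$ and $p(-i)=0$ goes beyond the paper, which tacitly assumes the hypotheses of Theorem~\ref{TheoremZeros} are met.
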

\begin{proof}
Let $\zeta$ and $1/\bar{\zeta}$ be two inversive zeros an SI polynomial
$p(z)$. Then, according to Theorem \ref{TheoremZeros}, the corresponding
zeros of $Q(z)$ will be: 
\begin{equation}
W(\zeta)=-i\frac{\zeta+1}{\zeta-1}=\eta\qquad\text{and}\qquad W\left(\frac{1}{\overline{\zeta}}\right)=-i\frac{1/\overline{\zeta}+1}{1/\overline{\zeta}-1}=i\frac{\overline{\zeta}+1}{\overline{\zeta}-1}=\overline{W(\zeta)}=\overline{\eta}.
\end{equation}
Thus, any pair of zeros of $p(z)$ that are symmetric to the unit
circle are mapped in zeros of $Q(z)$ that are symmetric to the real
line; because $p(z)$ is SI, it follows that $Q(z)$ is SC. Conversely,
let $\zeta$ and $\overline{\zeta}$ be two zeros of an SC polynomial
$p(z)$; then the corresponding zeros of $T(z)$ will be:
\begin{equation}
M(\zeta)=\frac{\zeta-i}{\zeta+i}=\tau\qquad\text{and}\qquad M\left(\overline{\zeta}\right)=\frac{\overline{\zeta}-i}{\overline{\zeta}+i}=-\frac{1/\overline{\zeta}+i}{1/\overline{\zeta}-i}=\frac{1}{\overline{M\left(\zeta\right)}}=\frac{1}{\overline{\tau}}.
\end{equation}
Thus, any pair of zeros of $p(z)$ that are symmetric to the real
line are mapped in zeros of $T(z)$ that are symmetric to the unit
circle. Because $p(z)$ is SC, it follows that $T(z)$ is SI.
\end{proof}
We can also verify that any SI polynomial with $\omega=1$ is mapped
to a real polynomial through $M(z)$, as well as, any real polynomial
is mapped to an SI polynomial with $\omega=1$ through $W(z)$. Thus,
the set of SI polynomials with $\omega=1$ is isomorphic to the set
of real polynomials. Besides, an SI polynomial with $\omega\neq1$
can be transformed into another one with $\omega=1$ by performing
a suitable uniform rotation of its zeros. It can also be shown that
the action of the Möbius transformation over a PSR polynomial leads
to a real polynomial that has only even powers. See \cite{Vieira2018}
for more. 

\section{Zeros location theorems \label{SectionTheorems}}

In this section, we shall discuss some theorems regarding the distribution
of the zeros of SC, SR and SI polynomials on the complex plane. Some
general theorems relying on the number of zeros that an arbitrary
complex polynomial has inside, on, or outside $\mathbb{S}$ are also
discussed. To save space, we shall not present the proofs of these
theorems, which can be found in the original works. Other related
theorems can be found in Marden's book \cite{Marden1966}.

\subsection{Polynomials that do not necessarily have symmetric zeros}

The following theorems are classics (see \cite{Marden1966} for the
proofs):
\begin{thm}
\textbf{\emph{(Rouché)}}. Let $q(z)$ and $r(z)$ be polynomials such
that $|q(z)|<|r(z)|$ along all points of $\mathbb{S}$. Then, the
polynomial $p(z)=q(z)+r(z)$ has the same number of zeros inside $\mathbb{S}$
as the polynomial $r(z)$, counted with multiplicity.
\end{thm}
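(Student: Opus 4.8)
The plan is to prove this via the \emph{argument principle}, which expresses the number of zeros of an analytic function inside a contour as a winding number. First I would observe that the strict inequality $|q(z)|<|r(z)|$ on $\mathbb{S}$ has two immediate consequences: it forces $r(z)\neq0$ everywhere on $\mathbb{S}$, and since $|p(z)|=|q(z)+r(z)|\geqslant|r(z)|-|q(z)|>0$, it forces $p(z)\neq0$ on $\mathbb{S}$ as well. Thus neither $r$ nor $p$ has a zero on the contour, so the argument principle applies to both.

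Next I would write $N_{f}$ for the number of zeros (with multiplicity) of a polynomial $f$ inside $\mathbb{S}$ and record that the argument principle gives
\begin{equation}
N_{f}=\frac{1}{2\pi i}\oint_{\mathbb{S}}\frac{f'(z)}{f(z)}\,dz=\frac{1}{2\pi}\,\Delta_{\mathbb{S}}\arg f,
\end{equation}
where $\Delta_{\mathbb{S}}\arg f$ denotes the total change of the argument of $f$ as $z$ traverses $\mathbb{S}$ once in the positive sense, i.e. the winding number of the image curve $f(\mathbb{S})$ about the origin. The goal then reduces to showing $N_{p}=N_{r}$, equivalently that $p(\mathbb{S})$ and $r(\mathbb{S})$ wind the same number of times around $0$.

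The decisive step is the factorization $p(z)=r(z)\,g(z)$ with $g(z)=1+q(z)/r(z)$, valid on $\mathbb{S}$ because $r$ does not vanish there. Taking arguments additively yields $\Delta_{\mathbb{S}}\arg p=\Delta_{\mathbb{S}}\arg r+\Delta_{\mathbb{S}}\arg g$, so it suffices to prove $\Delta_{\mathbb{S}}\arg g=0$. But $|q(z)/r(z)|<1$ on $\mathbb{S}$ means $g(z)$ lies in the open disk $|w-1|<1$, which is contained in the half-plane $\mathrm{Re}(w)>0$ and in particular avoids the origin; a closed curve confined to a disk not enclosing $0$ cannot wind around $0$, so its argument returns to its starting value and $\Delta_{\mathbb{S}}\arg g=0$. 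Combining the relations gives $N_{p}=N_{r}$, as required.

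I expect the main obstacle to be the rigorous bookkeeping of the winding number --- specifically, that the change of argument of a product splits as the sum of the changes, and that a closed curve trapped in a disk missing the origin has zero winding number. An alternative route that sidesteps the factorization is a homotopy argument: set $p_{t}(z)=r(z)+t\,q(z)$ for $t\in[0,1]$; the bound $|t\,q(z)|\leqslant|q(z)|<|r(z)|$ keeps $p_{t}$ zero-free on $\mathbb{S}$ for every $t$, so $N(t)=\tfrac{1}{2\pi i}\oint_{\mathbb{S}}p_{t}'(z)/p_{t}(z)\,dz$ is an integer-valued continuous function of $t$, hence constant, and comparing $t=0$ with $t=1$ gives $N_{r}=N_{p}$. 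In either approach the analytic content is the same; the care lies in handling the argument and winding number cleanly.
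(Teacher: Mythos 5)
Your proof is correct: both the argument-principle/factorization route and the homotopy variant you sketch are complete, and the points you isolate as needing care --- that $|q|<|r|$ on $\mathbb{S}$ forces both $r$ and $p=q+r$ to be zero-free on the contour, that the change of argument of a product is the sum of the changes, and that $g(z)=1+q(z)/r(z)$ maps $\mathbb{S}$ into the disk $\left|w-1\right|<1$, which misses the origin and hence contributes zero winding --- are exactly the right ones. The paper itself offers no proof of this statement (it is listed among the ``classics'' with the proof deferred to Marden's book), and your argument is the standard one given there, so your write-up stands as a valid self-contained proof consistent with the intended reference.
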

Thus, if a complex polynomial $p(z)=p_{0}+\cdots+p_{k}z^{k}+\cdots+p_{n}z^{n}$
of degree $n$ is such that $\left|p_{k}\right|>\left|p_{0}+\cdots+p_{k-1}+p_{k+1}+\cdots+p_{n}\right|$,
then $p(z)$ will have exactly $k$ zeros inside $\mathbb{S}$, counted
with multiplicity.
\begin{thm}
\textbf{\emph{(Gauss \& Lucas)}} The zeros of the derivative $p^{\prime}(z)$
of a polynomial $p(z)$ lie all within the convex hull of the zeros
of the $p(z)$.
\end{thm}
Thereby, if a polynomial $p(z)$ has all its zeros on $\mathbb{S}$,
then all the zeros of $p^{\prime}(z)$ will lie in or on $\mathbb{S}$.
In particular, the zeros of $p^{\prime}(z)$ will lie on $\mathbb{S}$
if, and only if, they are multiple zeros of $p(z)$. 
\begin{thm}
\textbf{\emph{(Cohn)}} \label{TheoremCohn1}A necessary and sufficient
condition for all the zeros of a complex polynomial $p(z)$ to lie
on $\mathbb{S}$ is that $p(z)$ is SI and that its derivative \textup{$p^{\prime}(z)$}
does not have any zero outside\textup{ $\mathbb{S}$. }
\end{thm}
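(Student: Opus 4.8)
The plan is to prove the two implications separately, using the Gauss--Lucas theorem for the necessity and a Rouch\'e-type counting argument for the sufficiency. For necessity, suppose every zero $\zeta$ of $p(z)$ satisfies $|\zeta|=1$. Then $1/\overline{\zeta}=\zeta$, so the zero set is invariant under the map $\zeta\mapsto 1/\overline{\zeta}$ and $p(z)$ is SI by Definition \ref{selfies}. Moreover, since all zeros lie on $\mathbb{S}$, their convex hull is contained in the closed unit disk; by the Gauss--Lucas theorem every zero of $p'(z)$ lies in that convex hull, hence in or on $\mathbb{S}$, so $p'(z)$ has no zero outside $\mathbb{S}$. (This is precisely the consequence of Gauss--Lucas already recorded above.)

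For sufficiency I would first isolate the key algebraic fact. Writing $p(z)=\sum_{k=0}^{n}p_{k}z^{k}$, consider the auxiliary polynomial $A(z)=2zp'(z)-np(z)$, whose coefficients are $a_{k}=(2k-n)p_{k}$. Using the SI coefficient relation $p_{k}=\omega\overline{p_{n-k}}$ one finds $\overline{a_{n-k}}=(n-2k)\overline{p_{n-k}}=(n-2k)\overline{\omega}\,p_{k}$, whence $A^{\dagger}(z)=-\overline{\omega}\,A(z)$, that is, $A(z)=-\omega A^{\dagger}(z)$: the polynomial $A$ is itself self-inversive. Consequently, on $\mathbb{S}$, where $1/\overline{z}=z$, the relations $p=\omega p^{\dagger}$ and $A=-\omega A^{\dagger}$ combine to show that $A(z)/p(z)$ is purely imaginary; equivalently $\mathrm{Re}\bigl(zp'(z)/p(z)\bigr)=n/2$ and
\[
4|p'(z)|^{2}=n^{2}|p(z)|^{2}+|A(z)|^{2},\qquad z\in\mathbb{S}.
\]
In particular $|p'(z)|\ge\tfrac{n}{2}|p(z)|>0$ at every point of $\mathbb{S}$ where $p$ does not vanish.

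Assume first that $p$ has no zeros on $\mathbb{S}$, and set $F(z)=2zp'(z)$ and $G(z)=np(z)$. On $\mathbb{S}$ both are nonzero, and $F/G=2zp'/(np)$ has real part $1$, so $F/G\notin(-\infty,0]$; hence $|F-G|<|F|+|G|$ and the symmetric form of Rouch\'e's theorem gives that $F$ and $G$ have the same number of zeros inside $\mathbb{S}$. Since $G$ has $N_{\mathrm{in}}(p)$ zeros there and $F$ has exactly one more than $p'$ (the zero contributed by the factor $z$ at the origin), we obtain $N_{\mathrm{in}}(p')=N_{\mathrm{in}}(p)-1$. Because $p$ is SI its off-circle zeros pair up, so $N_{\mathrm{in}}(p)=N_{\mathrm{out}}(p)=:j$ and $n=2j$; as $\deg p'=n-1$ and $p'$ cannot vanish on $\mathbb{S}$, this forces $p'$ to have $j$ zeros outside $\mathbb{S}$. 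If $j\ge1$ this contradicts the hypothesis, so $j=0$ and all zeros of $p$ lie on $\mathbb{S}$.

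The one gap is that the Rouch\'e step requires $G=np\neq0$ on $\mathbb{S}$, and I expect this to be the main obstacle: when $p$ has zeros on $\mathbb{S}$ the contour passes through them. I would resolve it by indenting $\mathbb{S}$ with small circular arcs around each boundary zero (or by a limiting argument on slightly deformed contours), verifying that the indentations only produce zeros of $p'$ lying on $\mathbb{S}$ — namely the multiple boundary zeros of $p$ with multiplicity lowered by one — and leave the inside count $N_{\mathrm{in}}(p')=j-1$ unchanged. A short bookkeeping of $\deg p'=n-1$ then gives $N_{\mathrm{out}}(p')\ge j\ge1$ whenever $p$ retains a pair of zeros off $\mathbb{S}$, contradicting the hypothesis and completing the proof.
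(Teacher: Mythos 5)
The paper does not actually prove this theorem --- Section 4 explicitly omits all proofs and refers to Cohn's original paper and to Bonsall \& Marden --- so there is no in-text argument to compare against; I can only assess your proposal on its own terms. Your necessity direction is correct, and your key lemma is also correct and well executed: $A(z)=2zp'(z)-np(z)$ is indeed SI with parameter $-\omega$, which gives $\mathrm{Re}\bigl(zp'(z)/p(z)\bigr)=n/2$ and $4|p'(z)|^{2}=n^{2}|p(z)|^{2}+|A(z)|^{2}$ on $\mathbb{S}$, and the symmetric Rouch\'e step is valid when $p$ has no zeros on $\mathbb{S}$.

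The genuine problem is the boundary-zero case, which you correctly flag but whose proposed repair does not work as stated. Near a zero $\zeta\in\mathbb{S}$ of $p$ of multiplicity $m$ one has $zp'(z)/p(z)=m\zeta/(z-\zeta)+O(1)$, and along the inward radius $z=\zeta(1-t)$, $t>0$, this tends to $-\infty$ through negative real values; so an \emph{inward} indentation --- which is what your claim that the inside count stays at $N_{\mathrm{in}}(p')=j-1$ amounts to --- places points on the contour where $F/G\in(-\infty,0]$ and the symmetric Rouch\'e hypothesis fails. You must indent \emph{outward}: on the outer half of a small circle about $\zeta$ the argument of $m\zeta/(z-\zeta)$ stays in $(-\pi/2-O(\epsilon),\pi/2+O(\epsilon))$, so the Rouch\'e condition holds, but then the enclosed counts change. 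Writing $k$ for the number of boundary zeros of $p$ with multiplicity, $d$ for the number of distinct ones, and $j=N_{\mathrm{in}}(p)=N_{\mathrm{out}}(p)$, the identity $4|p'|^{2}\geqslant n^{2}|p|^{2}$ shows $p'$ vanishes on $\mathbb{S}$ only at boundary zeros of $p$, so $N_{\mathbb{S}}(p')=k-d$, and Rouch\'e on the outward-indented contour gives $j+k=1+N_{\mathrm{in}}(p')+(k-d)$, i.e. $N_{\mathrm{in}}(p')=j+d-1$ (not $j-1$); the bookkeeping $N_{\mathrm{out}}(p')=(n-1)-(j+d-1)-(k-d)=j$ then still closes the argument. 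So the route is salvageable, but only with the corrected indentation and count. You should also know the classical proof, which avoids contours entirely: differentiating $p=\omega p^{\dagger}$ yields $np(z)-zp'(z)=\omega\,(p')^{\dagger}(z)$, and if $p'(z)=c\prod_{k}(z-t_{k})$ with all $|t_{k}|\leqslant1$ then $(p')^{\dagger}(z)=\overline{c}\prod_{k}(1-\overline{t_{k}}z)$ satisfies $|(p')^{\dagger}(z)|\leqslant|p'(z)|<|zp'(z)|$ for $|z|>1$, whence $np(z)\neq0$ outside $\mathbb{S}$ and, by the SI pairing, $p$ has no zeros off $\mathbb{S}$ at all --- with no case distinction for boundary zeros.
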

Cohn introduced his theorem in \cite{Cohn1922}. Bonsall \& Marden
presented a simpler proof of Conh's theorem  in \cite{Bonsall1952}
(see also \cite{Ancochea1953}) and applied it to SI polynomials ---
in fact, this was probably the first paper to use the expression ``self-inversive''.
Other important result of Cohn is the following: all the zeros of
a complex polynomial $p(z)=p_{n}z^{n}+\cdots+p_{0}$ will lie on $\mathbb{S}$
if, and only if, $|p_{n}|=|p_{0}|$ and all the zeros of $p(z)$ do
not lie outside $\mathbb{S}$. 

Restricting ourselves to polynomials with real coefficients, Eneström
\& Kakeya \cite{Enestrom1893,Enestrom1920,Kakeya1912} independently
presented the following theorem:
\begin{thm}
\textbf{\emph{(Eneström \& Kakeya) \label{TheoremEK}}} Let $p(z)$
be a polynomial of degree $n$ with real coefficients. If its coefficients
are such that $0<p_{0}\leqslant p_{1}\leqslant\cdots\leqslant p_{n-1}\leqslant p_{n}$,
then all the zeros of $p(z)$ lie in or on $\mathbb{S}$. Likewise,
if the coefficients of $p(z)$ are such that $0<p_{n}\leqslant p_{n-1}\leqslant\cdots\leqslant p_{1}\leqslant p_{0}$,
then all the zeros of $p(z)$ lie on or out $\mathbb{S}$.
\end{thm}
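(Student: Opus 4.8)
The plan is to prove the first statement by the classical device of multiplying $p(z)$ by the factor $(1-z)$, which converts the monotonicity hypothesis into a telescoping cancellation, and then to deduce the second statement from the first by passing to the reciprocal polynomial $p^{\ast}(z)$. First I would expand and collect powers of $z$ to obtain
\begin{equation}
(1-z)p(z)=p_{0}+\sum_{k=1}^{n}\left(p_{k}-p_{k-1}\right)z^{k}-p_{n}z^{n+1}.
\end{equation}
Now suppose, seeking a contradiction, that $p(z)$ has a zero $\zeta$ with $|\zeta|>1$. Then $(1-\zeta)p(\zeta)=0$ as well, so that $p_{n}\zeta^{n+1}=p_{0}+\sum_{k=1}^{n}(p_{k}-p_{k-1})\zeta^{k}$. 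Taking moduli and invoking the triangle inequality together with the hypotheses $p_{0}>0$ and $p_{k}-p_{k-1}\geqslant 0$, I get
\begin{equation}
p_{n}|\zeta|^{n+1}\leqslant p_{0}+\sum_{k=1}^{n}\left(p_{k}-p_{k-1}\right)|\zeta|^{k}.
\end{equation}

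The key step is the observation that, because $|\zeta|>1$, one has $|\zeta|^{k}\leqslant|\zeta|^{n}$ for every $0\leqslant k\leqslant n$. Bounding each power on the right-hand side by $|\zeta|^{n}$ and using that the coefficients telescope, namely $p_{0}+\sum_{k=1}^{n}(p_{k}-p_{k-1})=p_{n}$, the right-hand side is at most $p_{n}|\zeta|^{n}$. Hence $p_{n}|\zeta|^{n+1}\leqslant p_{n}|\zeta|^{n}$, and since $p_{n}>0$ this forces $|\zeta|\leqslant 1$, contradicting $|\zeta|>1$. Therefore every zero of $p(z)$ satisfies $|\zeta|\leqslant 1$, i.e. lies in or on $\mathbb{S}$.

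For the second statement I would apply the case just proved to the reciprocal polynomial $p^{\ast}(z)=p_{n}+p_{n-1}z+\cdots+p_{0}z^{n}$. The hypothesis $0<p_{n}\leqslant p_{n-1}\leqslant\cdots\leqslant p_{0}$ says precisely that the coefficients of $p^{\ast}(z)$, read from the constant term upwards, form a nondecreasing positive sequence, so the first part gives that all zeros of $p^{\ast}(z)$ lie in or on $\mathbb{S}$. Since the zeros of $p^{\ast}(z)$ are the reciprocals $1/\zeta_{1},\ldots,1/\zeta_{n}$ of the zeros of $p(z)$, the condition $|1/\zeta_{k}|\leqslant 1$ is equivalent to $|\zeta_{k}|\geqslant 1$; thus all zeros of $p(z)$ lie on or outside $\mathbb{S}$, as claimed. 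I do not anticipate a serious obstacle: the only points needing care are the strict inequality $p_{0}>0$, which keeps the leading bound effective, and the telescoping identity, both elementary; the one genuinely nonobvious move is the initial multiplication by $(1-z)$.
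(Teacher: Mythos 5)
The paper does not actually prove this theorem --- Section 4 states explicitly that proofs are omitted and are to be found in the original works (Enestr\"om, Kakeya) or in Marden's book --- so there is no in-paper argument to compare yours against. Judged on its own, your proof is correct and is the classical one: the expansion $(1-z)p(z)=p_{0}+\sum_{k=1}^{n}(p_{k}-p_{k-1})z^{k}-p_{n}z^{n+1}$ is right, the triangle inequality applies because every coefficient on the right is nonnegative under the hypothesis, the bound $|\zeta|^{k}\leqslant|\zeta|^{n}$ for $|\zeta|>1$ is valid (including the $k=0$ term), and the telescoping sum collapses to $p_{n}$, giving $p_{n}|\zeta|^{n+1}\leqslant p_{n}|\zeta|^{n}$ and hence the contradiction. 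The deduction of the second statement from the first via $p^{\ast}(z)$ is also sound and dovetails with the paper's own setup in Section 2, where it is noted that the zeros of $p^{\ast}(z)$ are the reciprocals $1/\zeta_{1},\ldots,1/\zeta_{n}$ of the zeros of $p(z)$; since $p_{0}\geqslant p_{n}>0$ in that case, $p^{\ast}(z)$ genuinely has degree $n$ and no zero of $p(z)$ is at the origin, so the correspondence is exact. One cosmetic remark: the hypothesis you single out as ``keeping the bound effective'' is really $p_{n}>0$ (needed to divide at the end), which of course follows from $p_{0}>0$ and the monotonicity chain.
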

The following theorems are relatively more recent. The distribution
of the zeros of a complex polynomial regarding the unit circle $\mathbb{S}$
was presented by Marden in \cite{Marden1966} and slightly enhanced
by Jury in \cite{Jury1964}:
\begin{thm}
\textbf{\emph{(Marden \& Jury)}} \label{TheoremMarden}Let $p(z)$
be a complex polynomial of degree $n$ and $p^{\ast}(z)$ its reciprocal.
Construct the sequence of polynomials $P_{j}(z)=\sum_{k=0}^{n-j}P_{j,k}z^{k}$
such that $P_{0}(z)=p(z)$ and $ $$P_{j+1}(z)=\overline{p_{j,0}}P_{j}(z)-\overline{p_{j,n-j}}P_{j}^{\ast}(z)$
for $0\leqslant j\leqslant n-1$ so that we have the relations $p_{j+1,k}=\overline{p_{j,0}}p_{j,k}-p_{j,n-j}\overline{p_{j,n-j-k}}$.
Let $\delta_{j}$ denote the constant terms of the polynomials $P_{j}(z)$,
i.e., $\delta_{j}=p_{j,0}$ and $\varDelta_{k}=\delta_{1}\cdots\delta_{k}$.
Thus, if $N$ of the products $\varDelta_{k}$ are negative and $n-N$
of the products $\varDelta_{k}$ are positive so that none of them
are zero, then $p(z)$ has $N$ zeros inside $\mathbb{S}$, $n-N$
zeros outside $\mathbb{S}$ and no zero on $\mathbb{S}$. On the other
hand, if $\varDelta_{k}\neq0$ for some $k<n$ but $P_{k+1}(z)=0$,
then $p(z)$ has either $n-k$ zeros on $\mathbb{S}$ or $n-k$ zeros
symmetric to $\mathbb{S}$. It has additionally $N$ zeros inside
$\mathbb{S}$ and $k-N$ zeros outside $\mathbb{S}$.
\end{thm}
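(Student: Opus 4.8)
The plan is to prove the Marden--Jury theorem by tracking how the Schur--Cohn recursion $P_{j+1}(z)=\overline{p_{j,0}}\,P_{j}(z)-\overline{p_{j,n-j}}\,P_{j}^{\ast}(z)$ transforms the count of zeros inside and outside $\mathbb{S}$ at each step. The underlying principle is that each iteration strips off one zero while preserving, in a controlled way, the inside/outside balance of the remaining zeros. Concretely, I would first establish the single-step lemma: if $\delta_{j}=p_{j,0}>0$ (equivalently $|p_{j,0}|>|p_{j,n-j}|$), then $P_{j}(z)$ and $P_{j+1}(z)$ have the same number of zeros inside $\mathbb{S}$, whereas if $\delta_{j}<0$ (i.e.\ $|p_{j,0}|<|p_{j,n-j}|$) the number of zeros of $P_{j+1}(z)$ inside $\mathbb{S}$ equals the number of zeros of $P_{j}(z)$ \emph{outside} $\mathbb{S}$, with the degree dropping by one in each case.

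The key tool for this lemma is Rouch\'e's theorem, already available in the excerpt. On $\mathbb{S}$ one has $|P_{j}^{\ast}(z)|=|P_{j}(z)|$ (since $|z|=1$ forces $|z^{n-j}\overline{P_{j}(1/\bar z)}|=|P_{j}(z)|$), so along the unit circle the two terms $\overline{p_{j,0}}P_{j}(z)$ and $\overline{p_{j,n-j}}P_{j}^{\ast}(z)$ have moduli $|p_{j,0}|\,|P_{j}(z)|$ and $|p_{j,n-j}|\,|P_{j}(z)|$ respectively. When $|p_{j,0}|>|p_{j,n-j}|$ the first term dominates on $\mathbb{S}$, so by Rouch\'e $P_{j+1}(z)$ has the same number of interior zeros as $\overline{p_{j,0}}P_{j}(z)$, namely those of $P_{j}(z)$; when $|p_{j,0}|<|p_{j,n-j}|$ the second term dominates and $P_{j+1}(z)$ inherits the interior zeros of $P_{j}^{\ast}(z)$, which by the reciprocal relation recorded early in the excerpt are exactly the reciprocals of the zeros of $P_{j}(z)$ and hence count the \emph{exterior} zeros of $P_{j}(z)$. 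The sign of $\delta_{j}$ thus encodes which of the two cases occurs, and the product $\varDelta_{k}=\delta_{1}\cdots\delta_{k}$ accumulates these sign flips.

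From the lemma the first assertion follows by induction. After $n$ steps the polynomial $P_{n}(z)$ is a nonzero constant (degree $0$) provided no $\delta_{j}$ vanishes, so $p(z)$ has no zero on $\mathbb{S}$; each step with $\delta_{j}<0$ contributes one sign change to the sequence $\varDelta_{1},\ldots,\varDelta_{n}$, and one checks that the total number $N$ of negative $\varDelta_{k}$ equals the number of zeros of $p(z)$ inside $\mathbb{S}$, the remaining $n-N$ lying outside. The boundary case where $\varDelta_{k}\neq0$ but $P_{k+1}(z)\equiv 0$ signals $|p_{k,0}|=|p_{k,n-k}|$, which is precisely the self-inversive condition of the degenerate factor: the reasoning via Theorem~2 (the $p(z)=\omega p^{\dagger}(z)$ characterisation) shows the $n-k$ remaining zeros are then either on $\mathbb{S}$ or occur in inversive pairs symmetric to $\mathbb{S}$, while the already-processed factor accounts for the $N$ interior and $k-N$ exterior zeros.

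I expect the main obstacle to be the boundary/degenerate case rather than the generic Rouch\'e argument. When $P_{k+1}(z)$ vanishes identically, the strict inequality needed for Rouch\'e fails, $|P_{k}(z)|$ and $|P_{k}^{\ast}(z)|$ coincide on $\mathbb{S}$, and one cannot simply count interior zeros by dominance; instead one must argue that equality of the constant and leading moduli forces $P_{k}(z)$ to be self-inversive up to a unimodular factor, and then invoke the pairing of zeros about $\mathbb{S}$ to conclude that the surviving $n-k$ zeros split as ``all on $\mathbb{S}$'' or ``in symmetric pairs.'' Handling the multiplicities correctly and ruling out spurious cancellations in this limiting regime is the delicate part; the clean generic case, by contrast, is an almost mechanical induction once the one-step Rouch\'e lemma is in hand.
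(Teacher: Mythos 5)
A preliminary remark: the paper does not actually prove this theorem --- Section 4 states explicitly that proofs are omitted and refers the reader to Marden's book and Jury's paper --- so there is no in-paper argument to compare yours against. On its own merits, your outline is the standard Schur--Cohn/Marden route (one Rouch\'e step per iteration, with the sign of the newly created constant term deciding whether the interior count is preserved or swapped with the exterior count), and that is indeed how the result is proved in the original sources. Two bookkeeping points: the quantity governing the $j$-th step is $\delta_{j+1}=p_{j+1,0}=|p_{j,0}|^{2}-|p_{j,n-j}|^{2}$, not $\delta_{j}$; and the identity $|{\cdot}|=|P_{j}(z)|$ on $\mathbb{S}$ that you invoke holds for the \emph{conjugated} reciprocal $P_{j}^{\dagger}(z)=z^{n-j}\overline{P_{j}(1/\bar z)}$, not for $P_{j}^{\ast}(z)$ as written (on $|z|=1$ one gets $|P_{j}^{\ast}(z)|=|P_{j}(\bar z)|$, which differs from $|P_{j}(z)|$ for complex coefficients). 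Your justification shows you mean $P_{j}^{\dagger}$, which is also what the theorem's coefficient relation requires, so this is a notational slip inherited from the statement rather than an error of substance.

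Two steps are genuinely incomplete rather than merely compressed. First, Rouch\'e needs strict inequality on all of $\mathbb{S}$, which fails wherever $P_{j}$ vanishes on $\mathbb{S}$; you must establish \emph{before} invoking Rouch\'e that no $P_{j}$ has a circle zero in the generic case. The mechanism is that any zero of $P_{j}$ on $\mathbb{S}$ is automatically a zero of $P_{j+1}$ (because $P_{j}^{\dagger}$ vanishes there too), so it would survive to the nonzero constant $P_{n}$ --- you assert the conclusion but not this argument, and the logical order matters. Relatedly, ``one checks that $N$ equals the number of interior zeros'' is the actual content of the first assertion: it is an induction in which a step with $\delta_{j+1}<0$ simultaneously flips the sign of every subsequent product $\varDelta_{k}$ and swaps interior with exterior counts while the degree drops by one, and the arithmetic ($1+(N-1)=N$ in the swap case) needs to be written out. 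Second, the degenerate case is not resolved. You correctly observe that $P_{k+1}\equiv0$ forces $P_{k}=\omega P_{k}^{\dagger}$ with $|\omega|=1$, so $P_{k}$ is SI and its own zeros are symmetric to $\mathbb{S}$; but the zeros of $P_{k}$ are not zeros of $p$ --- only the circle zeros are passed down the recursion unchanged, the off-circle zeros being scrambled at each step --- so transferring the symmetry statement from $P_{k}$ back to the $n-k$ remaining zeros of $p$ requires a further argument, which is precisely the delicate part of Marden's treatment and the part you defer. As it stands, the proposal is a correct plan for the generic case and a statement of intent for the singular one.
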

A simple necessary and sufficient condition for all the zeros of a
complex polynomial to lie on $\mathbb{S}$ was presented by Chen in
\cite{Chen1995}: 
\begin{thm}
\textbf{\emph{(Chen)\label{TheoremChen}}} A necessary and sufficient
condition for all the zeros of a complex polynomial $p(z)$ of degree
$n$ to lie on $\mathbb{S}$ is that there exists a polynomial $q(z)$
of degree $n-m$ whose zeros are all in or on $\mathbb{S}$ and such
that $p(z)=z^{m}q(z)+\omega q^{\dagger}(z)$ for some complex number
$\omega$ of modulus $1$.
\end{thm}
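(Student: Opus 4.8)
The plan is to treat the two implications separately, but both hinge on the fact that the stated construction $p(z)=z^{m}q(z)+\omega q^{\dagger}(z)$ automatically yields a self-inversive polynomial. First I would record this by a direct computation using the operational definitions $q^{\dagger}(z)=z^{n-m}\overline{q(1/\overline{z})}$ (dagger taken in degree $n-m$) and $p^{\dagger}(z)=z^{n}\overline{p(1/\overline{z})}$ (dagger in degree $n$). Since $(q^{\dagger})^{\dagger}=q$, a short substitution gives
\begin{equation}
p^{\dagger}(z)=\overline{\omega}\,z^{m}q(z)+q^{\dagger}(z)=\overline{\omega}\,p(z),\nonumber
\end{equation}
so that $p=\omega p^{\dagger}$ and $p$ is SI. This both establishes the symmetry of the zeros of $p$ about $\mathbb{S}$ (which will be used below) and confirms that the representation can only ever produce SI polynomials.

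For the sufficiency direction, assume $q$ has all its zeros $\zeta_{1},\dots,\zeta_{n-m}$ in or on $\mathbb{S}$, and write $q(z)=c\prod_{j}(z-\zeta_{j})$, so that $q^{\dagger}(z)=\overline{c}\prod_{j}(1-\overline{\zeta_{j}}z)$. The key object is the ratio
\begin{equation}
\frac{z^{m}q(z)}{q^{\dagger}(z)}=z^{m}\,\frac{c}{\overline{c}}\prod_{j}\frac{z-\zeta_{j}}{1-\overline{\zeta_{j}}z},\nonumber
\end{equation}
whose factors $\frac{z-\zeta_{j}}{1-\overline{\zeta_{j}}z}$ are Blaschke factors: since $|\zeta_{j}|\leqslant1$ each has $\left|\frac{z-\zeta_{j}}{1-\overline{\zeta_{j}}z}\right|\leqslant1$ for $|z|\leqslant1$, while $q^{\dagger}$ has no zeros in the open disk (its zeros are the $1/\overline{\zeta_{j}}$, of modulus $\geqslant1$). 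On every circle $|z|=\rho<1$ this gives the \emph{strict} inequality $|z^{m}q(z)|<|q^{\dagger}(z)|=|\omega q^{\dagger}(z)|$, the strictness coming from $|z|^{m}=\rho^{m}<1$ (or, when $m=0$, from any zero with $|\zeta_{j}|<1$). Rouché's theorem then forces $p$ to have the same number of zeros as $\omega q^{\dagger}$ inside $|z|<\rho$, namely none; letting $\rho\to1^{-}$ shows $p$ has no zeros in the open unit disk (in particular, for $m\geqslant1$, $p(0)=\omega\overline{c}\neq0$). Because $p$ is SI, its zeros are symmetric with respect to $\mathbb{S}$, so the number inside equals the number outside; having none inside forces none outside, and all $n$ zeros lie on $\mathbb{S}$.

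For the necessity direction, suppose every zero of $p$ lies on $\mathbb{S}$ and $n\geqslant1$. I would peel off a single unimodular zero: pick any zero $a$ of $p$, so $|a|=1$, and set $q(z)=p(z)/(z-a)$, a polynomial of degree $n-1$ (so $m=1$) whose zeros are the remaining zeros of $p$, all on $\mathbb{S}$. Since $a=1/\overline{a}$ is self-paired under $z\mapsto1/\overline{z}$, the quotient $q$ still has all its zeros on $\mathbb{S}$ and is therefore itself SI, say $q^{\dagger}=\overline{\mu}\,q$ with $|\mu|=1$. Then $zq(z)+\omega q^{\dagger}(z)=(z+\omega\overline{\mu})q(z)$, and choosing $\omega=-a\mu$ (so that $|\omega|=1$ and $\omega\overline{\mu}=-a$) gives exactly $p(z)=(z-a)q(z)$, the required representation; the case $n=0$ is trivial.

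The genuinely delicate point is the sufficiency argument, and specifically the fact that $|z^{m}q(z)|$ and $|q^{\dagger}(z)|$ are \emph{equal} on $\mathbb{S}$ itself --- this is precisely the self-inversive balance --- so Rouché's theorem cannot be applied directly on the unit circle. The device that overcomes this is to retreat to the interior circles $|z|=\rho<1$, where the factor $|z|^{m}$ (or a zero lying strictly inside) breaks the tie and restores a strict inequality, and then to recover the statement on $\mathbb{S}$ from the self-inversive symmetry of the zeros rather than from Rouché. The only remaining care is the bookkeeping for the boundary cases $m=0$ with $q$ having all its zeros already on $\mathbb{S}$, where $p$ degenerates to a nonzero scalar multiple of $q$ and the conclusion is immediate.
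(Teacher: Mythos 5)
Your proof is correct, and it is worth noting that the paper itself offers no proof of this theorem (Section 4 explicitly defers all proofs to the original sources), so your argument stands on its own. The three pillars all check out: the identity $p^{\dagger}(z)=\overline{\omega}\,p(z)$ follows exactly as you compute, using $(q^{\dagger})^{\dagger}=q$ with the dagger taken in degree $n-m$; the Blaschke-factor bound gives the strict inequality $|z^{m}q(z)|<|q^{\dagger}(z)|$ on $|z|=\rho<1$ (with the degenerate case $m=0$ and all zeros of $q$ unimodular correctly quarantined, where $p=(1+\omega\mu)q$ and the scalar is nonzero precisely because $p$ is assumed to have degree $n$); and the passage from ``no zeros in the open disk'' to ``all zeros on $\mathbb{S}$'' via the multiplicity-preserving pairing $\zeta\leftrightarrow1/\overline{\zeta}$ coming from $p=\omega p^{\dagger}$ is sound. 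The necessity direction, peeling off a single unimodular root with $m=1$ and $\omega=-a\mu$, is also fine, since a polynomial with all zeros on $\mathbb{S}$ is automatically SI.

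One simplification you may want to adopt, which is essentially Chen's original route: the retreat to interior circles and Rouch\'e can be avoided entirely. The identity $|1-\overline{\zeta_{j}}z|^{2}-|z-\zeta_{j}|^{2}=(1-|z|^{2})(1-|\zeta_{j}|^{2})$ shows that $|q^{\dagger}(z)|\leqslant|q(z)|$ whenever $|z|\geqslant1$ and all $|\zeta_{j}|\leqslant1$; hence for $|z|>1$ one has $|\omega q^{\dagger}(z)|\leqslant|q(z)|<|z|^{m}|q(z)|=|z^{m}q(z)|$ when $m\geqslant1$ (or a strict inequality already in the first step when $m=0$ and some $\zeta_{j}$ is interior), so $p(z)\neq0$ outside $\mathbb{S}$ by the triangle inequality alone. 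The self-inversive symmetry then excludes interior zeros as in your argument. This replaces the Rouch\'e-plus-limit device with a one-line estimate, at no loss of generality.
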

We close this section by mentioning that there exist many other well-known
theorems regarding the distribution of the zeros of complex polynomials.
We can cite, for example, the famous \emph{rule of Descartes} (the
number of positive zeros of a real polynomial is limited from above
by the number of sign variations in the ordered sequence of its coefficients),
the \emph{Sturm Theorem} (the exact number of zeros that a real polynomial
has in a given interval $(a,b]$ of the real line is determined by
the formula $N=\mathrm{var}\left[S(b)\right]-\mathrm{var}\left[S(a)\right]$,
where $\mathrm{var}\left[S(\xi)\right]$ means the number of sign
variations of the Sturm sequence $S(x)$ evaluated at $x=\xi$) and
\emph{Kronecker Theorem} (if all the zeros of a monic polynomial with
integer coefficients lie on the unit circle, then all these zeros
are indeed roots of unity), see \cite{Marden1966} for more. There
are still other important theorems relying on matrix methods and quadratic
forms that were developed by several authors as Cohn, Schur, Hermite,
Sylvester, Hurwitz, Krein, among others, see \cite{KreinNaimark1981}.

\subsection{Real self-reciprocal polynomials}

Let us now consider real SR polynomials. The theorems below are usually
applied to PSR polynomials, but some of them can be extended to NSR
polynomials as well. 

An analogue of Eneström-Kakeya theorem for PSR polynomials was found
by Chen in \cite{Chen1995} and then, in a slightly stronger version,
by Chinen in \cite{Chinen2008}:
\begin{thm}
\textbf{\emph{(Chen \& Chinen)}} Let $p(z)$ be a PSR polynomial of
degree $n$ that is written in the form $p(z)=p_{0}+p_{1}z+\cdots+p_{k}z^{k}+p_{k}z^{n-k}+p_{k-1}z^{n-k+1}+\cdots+p_{0}z^{n}$
and such that $0<p_{k}<p_{k-1}<\cdots<p_{1}<p_{0}$. Then all the
zeros of $p(z)$ are on $\mathbb{S}$.
\end{thm}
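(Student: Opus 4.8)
The plan is to recognise that the prescribed coefficient pattern puts $p$ directly into the form required by Chen's criterion (Theorem~\ref{TheoremChen}) and to verify the hypothesis of that criterion with the Enestr\"{o}m--Kakeya theorem (Theorem~\ref{TheoremEK}). Concretely, I would split $p$ into its low block $p_{0}+p_{1}z+\cdots+p_{k}z^{k}$ and its high block $p_{k}z^{n-k}+\cdots+p_{0}z^{n}$, and introduce the auxiliary polynomial $q(z)=p_{k}+p_{k-1}z+\cdots+p_{1}z^{k-1}+p_{0}z^{k}$ of degree $k$, namely the low block with its coefficients reversed. A one-line computation gives $q^{\ast}(z)=z^{k}q(1/z)=p_{0}+p_{1}z+\cdots+p_{k}z^{k}$ (the low block) and $z^{n-k}q(z)=p_{k}z^{n-k}+\cdots+p_{0}z^{n}$ (the high block), so that
\begin{equation}
p(z)=z^{n-k}q(z)+q^{\ast}(z).
\end{equation}
Since $p$ is real we have $q^{\dagger}=q^{\ast}$, so this is exactly the representation $p(z)=z^{m}q(z)+\omega\,q^{\dagger}(z)$ of Chen's criterion with $m=n-k$, $\omega=1$, and $\deg q=k=n-m$, as required.

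The remaining point is to check that every zero of $q$ lies in or on $\mathbb{S}$. Read from the constant term to the leading term, the coefficients of $q$ form the sequence $p_{k},p_{k-1},\ldots,p_{1},p_{0}$, which by hypothesis satisfies $0<p_{k}<p_{k-1}<\cdots<p_{1}<p_{0}$; this is precisely the strictly positive increasing pattern to which the first half of the Enestr\"{o}m--Kakeya theorem (Theorem~\ref{TheoremEK}) applies, whence all zeros of $q$ lie in the closed unit disc. Feeding this into the sufficiency direction of Chen's criterion immediately yields that all zeros of $p$ lie on $\mathbb{S}$, which is the assertion.

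The conceptual crux is the observation that $q^{\ast}$ reconstructs the low block while $z^{n-k}q$ reconstructs the high block, so the palindromic $p$ is automatically of Chen's form with a ``one-sided'' $q$ built from half of its coefficients; after that the proof is bookkeeping, and I do not anticipate a genuine obstacle. Two routine points deserve care: first, the overlap case $n=2k$, where the two middle monomials coincide, needs no separate argument because the decomposition then produces the correct doubled coefficient $2p_{k}z^{k}$ on its own; second, one should note that $q$ really has degree $k$, which holds since $p_{0}>0$. As a self-contained fallback I would keep in reserve the substitution $x=z+1/z$, writing $p(z)=z^{n/2}q(x)$ and proving directly that the associated real polynomial has all roots in $[-2,2]$; I expect this route to be strictly harder, since the sign-counting it requires is far more delicate than the short Chen/Enestr\"{o}m--Kakeya argument above, and it reproves rather than invokes machinery already at hand.
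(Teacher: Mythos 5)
Your argument is correct: the decomposition $p(z)=z^{n-k}q(z)+q^{\ast}(z)$ with $q(z)=p_{k}+p_{k-1}z+\cdots+p_{0}z^{k}$ does reconstruct both blocks (including the doubled middle coefficient when $n=2k$), the Enestr\"{o}m--Kakeya theorem (Theorem \ref{TheoremEK}) places all zeros of $q$ in the closed unit disc, and the sufficiency half of Chen's criterion (Theorem \ref{TheoremChen}) then forces all zeros of $p$ onto $\mathbb{S}$. The survey itself omits the proof and defers to the original sources, but your route is essentially the one taken there \cite{Chen1995,Chinen2008}: the whole point of Chen's criterion in this context is precisely this pairing with Enestr\"{o}m--Kakeya, so no comparison beyond that is needed.
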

Going in the same direction, Choo found in \cite{Choo2011} the following
condition: 
\begin{thm}
\textbf{\emph{(Choo)}} Let $p(z)$ be a PSR polynomial of degree $n$
and such that its coefficients satisfy the following conditions: $np_{n}\geqslant\left(n-1\right)p_{n-1}\geqslant\cdots\geqslant\left(k+1\right)p_{k+1}>0$
and $\left(k+1\right)p_{k+1}\geqslant\sum_{j=0}^{k}\left|\left(j+1\right)p_{j+1}-jp_{j}\right|$
for $0\leqslant k\leqslant n-1$ Then, all the zeros of $p(z)$ are
on $\mathbb{S}$. 
\end{thm}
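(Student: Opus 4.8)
The plan is to reduce the entire statement to Cohn's theorem (Theorem~\ref{TheoremCohn1}). Since $p(z)$ is PSR it has real coefficients and satisfies $p(z)=z^{n}p(1/z)$; for real coefficients one has $p^{\dagger}(z)=z^{n}\overline{p(1/\overline z)}=z^{n}p(1/z)=p(z)$, so $p$ is automatically SI with $\omega=1$. Cohn's theorem then says that \emph{all} zeros of $p$ lie on $\mathbb{S}$ if and only if $p$ is SI and its derivative $p'(z)$ has no zero outside $\mathbb{S}$. As the SI property is free, the whole theorem collapses to the single assertion that $p'(z)\neq0$ whenever $|z|>1$.

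Next I would recast the hypotheses as conditions on the coefficients of $p'$. Writing $p'(z)=\sum_{j=0}^{n-1}a_{j}z^{j}$ with $a_{j}=(j+1)p_{j+1}$ and setting $a_{-1}:=0$, the telescoping identity $(j+1)p_{j+1}-jp_{j}=a_{j}-a_{j-1}$ turns the hypotheses into the monotonicity chain $a_{n-1}\geqslant a_{n-2}\geqslant\cdots\geqslant a_{0}>0$ together with $a_{k}\geqslant\sum_{j=0}^{k}|a_{j}-a_{j-1}|$ for each $k$. Thus the problem is exactly a zero-localization statement for $p'$ of Enestr\"{o}m--Kakeya type (cf.\ Theorem~\ref{TheoremEK}), the absolute-value condition being the genuine generalization: I must show this data forces every zero of $p'$ into the closed unit disk.

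The estimate is the heart of the matter. I would multiply by $(1-z)$ to telescope,
\[
(1-z)\,p'(z)=a_{0}+\sum_{j=1}^{n-1}(a_{j}-a_{j-1})z^{j}-a_{n-1}z^{n},
\]
and then, for $|z|=R\geqslant1$, use the triangle inequality to bound
\[
|(1-z)p'(z)|\geqslant a_{n-1}R^{n}-R^{\,n-1}\Bigl(a_{0}+\sum_{j=1}^{n-1}|a_{j}-a_{j-1}|\Bigr)\geqslant a_{n-1}R^{\,n-1}(R-1),
\]
where the last inequality invokes the hypothesis at $k=n-1$, namely $a_{n-1}\geqslant a_{0}+\sum_{j=1}^{n-1}|a_{j}-a_{j-1}|$. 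For $R>1$ the right-hand side is strictly positive, so $p'$ has no zero with $|z|>1$; and $z=1$ is not a zero either, since $p'(1)=\sum_{j}a_{j}>0$ by positivity of the $a_{j}$. Feeding this into Cohn's theorem places all zeros of $p$ on $\mathbb{S}$.

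The main obstacle is this localization estimate, not the reduction. Two points need care. First, one must dispose of the spurious factor $(1-z)$: multiplying by $(1-z)$ can only introduce a zero at $z=1$, so I must separately confirm $p'(1)\neq0$, which the positivity of the $a_{j}$ supplies. Second, I would want to pin down which hypotheses are truly used: the crude bound above collapses everything onto the single instance $k=n-1$, so it is worth checking whether Choo's per-$k$ conditions are genuinely required for a sharper localization (obtained by an Abel-summation estimate that retains each power $R^{j}$ and uses the inequality for every $k$) or are merely a convenient sufficient package guaranteeing the one inequality $a_{n-1}\geqslant a_{0}+\sum_{j=1}^{n-1}|a_{j}-a_{j-1}|$. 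In either case the strictness of the bound for $R>1$ guarantees no zero of $p'$ escapes $\mathbb{S}$, which is precisely what Cohn's criterion needs.
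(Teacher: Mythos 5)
The paper itself offers no proof of this theorem --- Section \ref{SectionTheorems} explicitly defers all proofs to the original sources --- so your argument has to be judged on its own terms, and it is correct; it is also, in substance, the route one expects (and the one Choo takes): reduce to Cohn's Theorem \ref{TheoremCohn1} and then prove an Enestr\"om--Kakeya-type localization (cf.\ Theorem \ref{TheoremEK}) for the derivative. The reduction is clean: a PSR polynomial has real coefficients, hence $p^{\dagger}=p^{\ast}=p$ and $p$ is SI, so the whole burden is to show $p'(z)\neq0$ for $|z|>1$. Your estimate does this: with $a_{j}=(j+1)p_{j+1}$ and $a_{-1}=0$, for $|z|=R>1$ the factor $1-z$ is nonzero and $|(1-z)p'(z)|\geqslant a_{n-1}R^{n}-R^{n-1}\sum_{j=0}^{n-1}|a_{j}-a_{j-1}|\geqslant a_{n-1}R^{n-1}(R-1)>0$. (Your separate check at $z=1$ is superfluous: $z=1$ is not outside $\mathbb{S}$, so Cohn's criterion never looks there.) On the question you raise at the end: if the hypothesis is read with ``for some $k$'' rather than ``for all $k$'', the single inequality you invoke at $k=n-1$ is not literally among the hypotheses, but it still follows from them --- for $j>k$ the monotonicity chain gives $|a_{j}-a_{j-1}|=a_{j}-a_{j-1}$, which telescopes to $a_{n-1}-a_{k}$, while the remaining block $\sum_{j=0}^{k}|a_{j}-a_{j-1}|$ is bounded by $a_{k}$, so $\sum_{j=0}^{n-1}|a_{j}-a_{j-1}|\leqslant a_{n-1}$ under either reading. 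So the per-$k$ conditions are indeed a package whose only role in this bound is to guarantee that one inequality, and your proof is complete as written.
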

Lakatos discussed the separation of the zeros on the unit circle of
PSR polynomials in \cite{Lakatos2002}; she also found several sufficient
conditions for their zeros to be all on $\mathbb{S}$. One of the
main theorems is the following:
\begin{thm}
\textbf{\emph{(Lakatos)}} \label{TheoremLakatos1}Let $p(z)$ be a
PSR polynomial of degree $n>2$. If $\left|p_{n}\right|\geqslant\sum_{k=1}^{n-1}\left|p_{n}-p_{k}\right|$,
then all the zeros of $p(z)$ lie on $\mathbb{S}$. Moreover, the
zeros of $p(z)$ are all simple, except when the equality takes place.
\end{thm}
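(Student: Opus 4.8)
The plan is to reduce everything to counting the real zeros of a trigonometric polynomial and then to compare $p$ against an explicit PSR polynomial all of whose zeros are known to lie on $\mathbb{S}$. First I would restrict attention to the unit circle: writing $z=e^{i\theta}$ and pairing the term $p_k z^k$ with $p_{n-k}z^{n-k}=p_k z^{n-k}$ (using $p_k=p_{n-k}$ and the fact that the coefficients of a PSR polynomial are real), one factors out $e^{in\theta/2}$ to obtain $p(e^{i\theta})=e^{in\theta/2}H(\theta)$, where $H(\theta)=\sum_{k=0}^{n}p_k\cos\big((k-\tfrac n2)\theta\big)$ is a \emph{real} function (the sine parts cancel by the palindromic symmetry). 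Since $e^{in\theta/2}$ never vanishes, the zeros of $p$ on $\mathbb{S}$ correspond exactly to the zeros of $H$ on $[0,2\pi)$, so it suffices to show that $H$ has $n$ zeros there.

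The comparison polynomial I would use is $R(z)=p_n(1+z+\cdots+z^n)=p_n(z^{n+1}-1)/(z-1)$, which is itself PSR and whose $n$ zeros are the $(n+1)$-th roots of unity other than $1$, all lying on $\mathbb{S}$. On the circle one gets $R(e^{i\theta})=p_n e^{in\theta/2}D_n(\theta)$ with the Dirichlet kernel $D_n(\theta)=\sin\big((n+1)\theta/2\big)/\sin(\theta/2)$, while the coefficients of $p-R$ are $p_k-p_n$ (vanishing for $k=0$ and $k=n$ since $p_0=p_n$). The same factorization then yields $H(\theta)=p_n D_n(\theta)+G(\theta)$, where $G$ is real and $|G(\theta)|\leqslant\sum_{k=1}^{n-1}|p_k-p_n|\leqslant|p_n|$ by hypothesis.

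The heart of the argument is a sign-alternation count. I would evaluate $H$ at the $n+1$ points $\theta_j=(2j+1)\pi/(n+1)$, $j=0,\ldots,n$, at which $\sin\big((n+1)\theta_j/2\big)=(-1)^j$, so that $D_n(\theta_j)=(-1)^j/\sin(\theta_j/2)$ has sign $(-1)^j$ and modulus $\geqslant 1$. Hence $|p_n D_n(\theta_j)|\geqslant|p_n|\geqslant|G(\theta_j)|$, and wherever this inequality is strict the sign of $H(\theta_j)$ equals that of $p_n(-1)^j$, which alternates with $j$. Strictness holds at every $\theta_j\neq\pi$, because there $|D_n(\theta_j)|=1/\sin(\theta_j/2)>1$; consequently $H$ changes sign across each of the $n$ intervals $(\theta_j,\theta_{j+1})$, and the intermediate value theorem produces $n$ distinct zeros of $H$ in $(0,2\pi)$. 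Since $\deg p=n$, these account for all zeros of $p$, forcing every one of them onto $\mathbb{S}$; and having exactly one zero in each of the $n$ disjoint intervals makes them all simple.

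The delicate point — and the reason the statement singles out the case of equality — is the sample point $\theta_j=\pi$, which occurs precisely when $n$ is even (at $j=n/2$) and where $|D_n|=1$ exactly. Under the strict hypothesis $|p_n|>\sum_{k=1}^{n-1}|p_k-p_n|$ the sign is still determined at this point and the count above is unaffected; under equality, however, this single point may fail to give a strict sign, so I expect this to be the main obstacle. There I would fall back on the symmetry of the zeros: the zeros of a real PSR polynomial that lie off $\mathbb{S}$ occur in reciprocal pairs (or conjugate–reciprocal quadruples), so their number is even. Since the sign-change argument still locates at least $n-1$ zeros on $\mathbb{S}$, the number of remaining zeros off $\mathbb{S}$ is at most $1$ and even, hence $0$, so all zeros still lie on $\mathbb{S}$ — but the borderline may collapse two of the interval zeros into a single double zero, which is exactly the failure of simplicity that the theorem permits when equality holds.
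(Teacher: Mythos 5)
Your argument is correct. Note that the paper itself does not prove this theorem --- Section 4 explicitly defers all proofs to the original works (here Lakatos, 2002) --- so there is no in-paper proof to compare against; but your reduction is sound and is essentially the classical route: factor out $e^{in\theta/2}$ to get the real cosine polynomial $H(\theta)$, compare with $p_n(1+z+\cdots+z^n)$ so that the dominant part is the Dirichlet kernel $\sin\bigl((n+1)\theta/2\bigr)/\sin(\theta/2)$, and count sign alternations at $\theta_j=(2j+1)\pi/(n+1)$. All the delicate points are handled properly: the sign at $\theta_j\neq\pi$ is strict because $1/\sin(\theta_j/2)>1$ there, the exceptional point $\theta=\pi$ (only present for $n$ even) can only yield $H(\pi)=0$ or the correct sign since $|G(\pi)|\leqslant|p_n|=|p_nD_n(\pi)|$, and in the degenerate subcase the parity of the number of off-circle zeros of a real PSR polynomial correctly forces the one unaccounted zero onto $\mathbb{S}$. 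The only cosmetic remark is that your proof never uses the hypothesis $n>2$, which is harmless since you prove slightly more than is claimed.
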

For PSR polynomials of odd degree, Lakatos \& Losonczi \cite{LakatosLosonczi2003}
found a stronger version of this result:
\begin{thm}
\textbf{\emph{(Lakatos \& Losonczi)}} \label{TheoremLakatosLosonczi1}Let
$p(z)$ be a PSR polynomial of odd degree, say $n=2m+1$. If $\left|p_{2m+1}\right|\geqslant\cos^{2}(\phi_{m})\sum_{k=1}^{2m}\left|p_{2m+1}-p_{k}\right|$,
where $\phi_{m}=\pi/\left[4\left(m+1\right)\right]$, then all the
zeros of $p(z)$ lie on $\mathbb{S}$. The zeros are simple except
when the equality is strict.
\end{thm}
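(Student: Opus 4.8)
The plan is to turn the problem into a statement about sign changes of a real trigonometric polynomial and then compare $p$ with a benchmark polynomial whose zeros are equally spaced on $\mathbb{S}$. First I would restrict to the unit circle by setting $z=\mathrm{e}^{i\theta}$ and defining $F(\theta)=\mathrm{e}^{-in\theta/2}p(\mathrm{e}^{i\theta})$. Because $p$ is PSR, pairing the terms $p_k$ and $p_{n-k}=p_k$ shows that $F$ is real-valued and even in $\theta$, and $F$ vanishes exactly where $p$ has a zero on $\mathbb{S}$. Since $p$ has real coefficients, its zeros on $\mathbb{S}$ occur in conjugate pairs $\mathrm{e}^{\pm i\theta}$, so it suffices to produce $m$ zeros of $F$ in the open interval $(0,\pi)$; together with the zero at $z=-1$ (that is, $\theta=\pi$), which every PSR polynomial of odd degree possesses, this accounts for all $n=2m+1$ zeros.

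Next I would introduce the benchmark $g(z)=p_n(1+z+\cdots+z^n)=p_n(z^{n+1}-1)/(z-1)$, whose zeros are the $(n+1)$-st roots of unity other than $1$, equally spaced at $\theta_j=\pi j/(m+1)$. Writing $G(\theta)=\mathrm{e}^{-in\theta/2}g(\mathrm{e}^{i\theta})=p_n\sin((m+1)\theta)/\sin(\theta/2)$, the difference $D=F-G$ comes only from the interior coefficients, since $p(z)-g(z)=\sum_{k=1}^{n-1}(p_k-p_n)z^k$, so that $|D(\theta)|\le\sum_{k=1}^{2m}|p_{2m+1}-p_k|$ uniformly in $\theta$ --- precisely the sum appearing in the hypothesis. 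I would then evaluate $F$ at the midpoints $\psi_j=(2j+1)\pi/(2(m+1))$, $j=0,\ldots,m$, which interlace the zeros $\theta_1,\ldots,\theta_m$ of $G$ in $(0,\pi)$. At each $\psi_j$ one has $\sin((m+1)\psi_j)=\pm1$, so $G(\psi_j)$ alternates in sign and $|G(\psi_j)|=|p_n|/|\sin(\psi_j/2)|$. If the hypothesis forces $|D|<|G(\psi_j)|$ at every $\psi_j$, then $F$ inherits the sign of $G$ at each midpoint, hence changes sign $m$ times and has the required $m$ zeros in $(0,\pi)$.

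The delicate point, and the main obstacle, is extracting exactly the constant $\cos^2(\phi_m)$. The crude estimate $\sin(\psi_j/2)\le\cos\phi_m$, with the extreme value attained at the midpoint $\psi_m=\pi-\pi/(2(m+1))$ nearest $\theta=\pi$, yields only $\min_j|G(\psi_j)|=|p_n|/\cos\phi_m$, which proves the theorem merely with the weaker constant $\cos(\phi_m)$. Gaining the extra factor of $\cos\phi_m$ requires exploiting the forced zero at $\theta=\pi$ more efficiently than a plain midpoint comparison does. A natural way to do so is to peel it off by writing $p(z)=(1+z)h(z)$ with $h$ a PSR polynomial of even degree $2m$, pass to the variable $x=z+1/z$ so that $h(z)=z^m q(x)$ with $\deg q=m$ and all zeros of $p$ lie on $\mathbb{S}$ iff all zeros of $q$ lie in $[-2,2]$, and then run the interlacing argument against the $m$ extremal (Chebyshev-spaced) nodes of $[-2,2]$. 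The half-angle identity $\cos^2\phi_m=\tfrac12(1+\cos(\pi/(2(m+1))))$ is what converts the single cosine of the naive bound into the squared cosine, and carefully tracking the coefficient identities through the division by $(1+z)$ and the change of variable is the step that demands genuine work.

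Finally I would dispatch the simplicity claim. Under strict inequality the sign alternations of $F$ are strict, so there is exactly one simple zero of $F$ in each of the $m$ gaps, and combined with the simple zero at $z=-1$ this gives $n$ distinct zeros on $\mathbb{S}$. When equality holds at a test point, a sign change can degenerate into a tangency, permitting a multiple zero; this boundary case is the only one excluded from the simplicity assertion.
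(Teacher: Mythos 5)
A preliminary remark: the paper is a survey and explicitly omits the proofs in Section~\ref{SectionTheorems} (``we shall not present the proofs of these theorems, which can be found in the original works''), so your proposal is judged on its own merits rather than against a proof in the text. Your framework is the right one and is essentially the classical route: $F(\theta)=\mathrm{e}^{-in\theta/2}p(\mathrm{e}^{i\theta})$ is indeed real and even because $p_k=p_{n-k}$, the comparison function $G(\theta)=p_n\sin((m+1)\theta)/\sin(\theta/2)$ is computed correctly, the midpoints $\psi_j=(2j+1)\pi/(2(m+1))$ interlace its zeros, and the counting ($m$ sign changes in $(0,\pi)$, their conjugates, plus the forced zero at $z=-1$) correctly accounts for all $n=2m+1$ zeros; the simplicity discussion is also fine.

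The genuine gap is the one you flag yourself and then do not close: the uniform estimate $|D(\theta)|\leqslant\Sigma:=\sum_{k=1}^{2m}|p_{2m+1}-p_k|$ together with $\min_j|G(\psi_j)|=|p_n|/\cos\phi_m$ (attained at $j=m$) proves the theorem only with the constant $\cos\phi_m$ in place of $\cos^2\phi_m$. Since $\cos^2\phi_m<\cos\phi_m<1$, what you have established is strictly weaker than the stated theorem (though stronger than Theorem~\ref{TheoremLakatos1} for odd degree), and the passage to $\cos^2\phi_m$ is the entire content of the Lakatos--Losonczi refinement. Your proposed repair (factor out $z+1$, pass to $x=z+1/z$, compare at Chebyshev nodes) is left as a sketch, and no choice of test points rescues the crude bound: even optimizing the node in the last lobe of $G$, the attainable threshold stays below $1/\cos^2\phi_m$ (for $m=1$ the optimum is about $1.089\,|p_n|$ against the required $1/\cos^{2}(\pi/8)\approx1.172$). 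The missing ingredient is a \emph{pointwise} rather than uniform bound on the perturbation: using $p_k=p_{n-k}$ one has $D(\theta)=2\sum_{k=1}^{m}(p_k-p_n)\cos\bigl(\bigl(m+\tfrac12-k\bigr)\theta\bigr)$, and at the critical node $\psi_m=\pi-2\phi_m$ the half-integer frequencies give $\bigl|\cos\bigl(\bigl(m+\tfrac12-k\bigr)\psi_m\bigr)\bigr|=\sin\bigl((2m+1-2k)\phi_m\bigr)\leqslant\cos 3\phi_m$, whence $|D(\psi_m)|\leqslant\cos(3\phi_m)\,\Sigma$; combined with $|G(\psi_m)|=|p_n|/\cos\phi_m$ this requires only $|p_n|\geqslant\cos\phi_m\cos 3\phi_m\,\Sigma$, and $\cos\phi_m\cos 3\phi_m=\tfrac12(\cos 2\phi_m+\cos 4\phi_m)\leqslant\tfrac12(1+\cos 2\phi_m)=\cos^2\phi_m$. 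This cancellation near $\theta=\pi$ is the analytic trace of the forced zero at $z=-1$ that you correctly suspected must be exploited; until the analogous estimate is verified at every $\psi_j$, the proof is incomplete.
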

Theorem \ref{TheoremLakatos1} was generalized further by Lakatos
\& Losonczi in \cite{LakatosLosonczi2007}:
\begin{thm}
\textbf{\emph{(Lakatos \& Losonczi)}} All zeros of a PSR polynomial
$p(z)$ of degree $n>2$ lie on $\mathbb{S}$ if the following conditions
hold: $\left|p_{n}+r\right|\geqslant\sum_{k=1}^{n-1}\left|p_{k}-p_{n}+r\right|$,
$p_{n}r\geqslant0$ and $\left|p_{n}\right|\geqslant\left|r\right|$,
for $r\in\mathbb{R}$.
\end{thm}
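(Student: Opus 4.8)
The plan is to model the argument on the $r=0$ case, which is exactly Theorem \ref{TheoremLakatos1}, and to reduce the whole statement to a sign-counting problem for a real trigonometric function. First I would restrict $p$ to the unit circle by setting $z=\mathrm{e}^{i\theta}$ and introducing $g(\theta)=\mathrm{e}^{-in\theta/2}p(\mathrm{e}^{i\theta})$. Using the palindromic relations $p_k=p_{n-k}$ one checks that $\overline{g(\theta)}=g(\theta)$, so $g$ is real-valued; consequently $p(\mathrm{e}^{i\theta})=0$ if and only if $g(\theta)=0$, and the zeros of $p$ on $\mathbb{S}$ in $[0,2\pi)$ are precisely the zeros of $g$ there, counted with multiplicity. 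Since $p$ has exactly $n$ zeros in the whole plane, it suffices to produce $n$ zeros of $g$ in $[0,2\pi)$: these then account for all the zeros of $p$, which therefore all lie on $\mathbb{S}$.

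Second, I would exploit the free parameter $r$ through a decomposition against a fixed reference. Because $p_0=p_n$, one may write $p(z)=(p_n+r)\,D(z)+R(z)$, where $D(z)=\sum_{k=0}^{n}z^{k}=(z^{n+1}-1)/(z-1)$ is the ``flat'' comparison polynomial and $R(z)$ collects the deviations, so that the middle coefficients of $R$ are exactly the quantities $p_k-p_n+r$ appearing in the hypothesis (the two boundary coefficients of $R$, of size $|r|$, being controlled separately). On the circle one has the Dirichlet-kernel identity $\mathrm{e}^{-in\theta/2}D(\mathrm{e}^{i\theta})=\sin\!\big((n+1)\theta/2\big)/\sin(\theta/2)$, a real function that attains alternating signs and large modulus near the points $\theta_j=2\pi j/(n+1)$. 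The hypotheses then serve as a domination estimate: on $\mathbb{S}$ we have $|R(\mathrm{e}^{i\theta})|\le\sum_{k=1}^{n-1}|p_k-p_n+r|+(\text{boundary terms})$, and the inequality $|p_n+r|\ge\sum_{k=1}^{n-1}|p_k-p_n+r|$ says precisely that the reference term dominates the remainder.

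Third, I would evaluate $g$ at a suitable family of $n$ nodes — the extrema of the Dirichlet kernel, where $\mathrm{e}^{-in\theta/2}(p_n+r)D(\mathrm{e}^{i\theta})$ is both large and of alternating sign — and invoke the domination estimate to conclude that $g$ inherits the same alternation at consecutive nodes. The intermediate value theorem then produces a zero of $g$ between each consecutive pair, yielding the required $n$ zeros and forcing all zeros of $p$ onto $\mathbb{S}$; the simplicity of the zeros away from equality follows by noting that a double zero would collapse one sign change, which the strict inequality forbids.

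The two conditions $p_n r\ge 0$ and $|p_n|\ge|r|$ enter exactly at the boundary-term bookkeeping: they guarantee that the end coefficients of $R$ (at $k=0$ and $k=n$) do not reverse the sign of the dominant term at the nodes, and that $p_n+r$ does not vanish, so that the comparison polynomial genuinely controls $p$ throughout. I expect the main obstacle to be precisely this boundary analysis — choosing the nodes so that the Dirichlet kernel's sign pattern survives the addition of $R$, and verifying that the two $|r|$-sized end contributions are absorbed by the slack in the domination inequality rather than destroying a sign change. Once the node selection is pinned down, the remainder is the same intermediate-value mechanism that settles the $r=0$ case.
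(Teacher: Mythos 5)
First, a caveat: the survey never proves this theorem --- Section \ref{SectionTheorems} explicitly defers all proofs to the original works --- so your proposal can only be judged on its own merits, not against a proof in this paper. Your opening move (passing to the real function $g(\theta)=\mathrm{e}^{-in\theta/2}p(\mathrm{e}^{i\theta})$ and hunting for $n$ sign changes at the Dirichlet nodes) is indeed the mechanism behind the $r=0$ case, Theorem \ref{TheoremLakatos1}. But the extension to general $r$ breaks down exactly at the place you defer as ``boundary-term bookkeeping''. There are two issues. The small one is a sign slip: with $D(z)=\sum_{k=0}^{n}z^{k}$, the remainder $R=p-(p_{n}+r)D$ has middle coefficients $p_{k}-p_{n}-r$, not $p_{k}-p_{n}+r$; the decomposition that matches the hypothesis is $p(z)=(p_{n}-r)D(z)+r\left(1+z^{n}\right)+\sum_{k=1}^{n-1}\left(p_{k}-p_{n}+r\right)z^{k}$, so $p_{n}+r$ is not the coefficient of the Dirichlet term at all. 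The fatal one is that the combined main term at the nodes $\theta_{j}=(2j+1)\pi/(n+1)$ equals $(-1)^{j}\left[(p_{n}-r)/s_{j}+2rs_{j}\right]$ with $s_{j}=\sin(\theta_{j}/2)$, and by AM--GM its magnitude is bounded below only by $2\sqrt{2r(p_{n}-r)}$, which satisfies $2\sqrt{2r(p_{n}-r)}\leqslant p_{n}+r$ for every admissible $r$ (since $(p_{n}+r)^{2}-8r(p_{n}-r)=(p_{n}-3r)^{2}\geqslant0$). So the hypothesis $|p_{n}+r|\geqslant\sum_{k=1}^{n-1}|p_{k}-p_{n}+r|$ does \emph{not} dominate the remainder at those nodes, and the conditions $p_{n}r\geqslant0$, $|p_{n}|\geqslant|r|$ do not repair this.

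This is not a repairable technicality in the node choice. Take $n=4$, $r=\tfrac{1}{2}$ and $p(z)=1+\tfrac{1}{2}z+1.95z^{2}+\tfrac{1}{2}z^{3}+z^{4}$: all three hypotheses hold strictly ($1.5>1.45$), the conclusion is true, and here $g(\theta)=2\cos2\theta+\cos\theta+1.95=4c^{2}+c-0.05$ with $c=\cos\theta$, which has four simple zeros on $\mathbb{S}$. Yet $g(\theta_{j})>0$ at all five nodes $(2j+1)\pi/5$ (the values are approximately $3.38$, $0.02$, $2.95$, $0.02$, $3.38$); in fact $g<0$ only on two arcs of width about $19^{\circ}$, each lying strictly inside a single internode gap and containing \emph{two} zeros. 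No system of $n+1$ nodes chosen independently of $p$ can detect these zeros by sign alternation, so the intermediate-value mechanism that settles $r=0$ cannot be carried over. The missing ingredient --- the actual content of the Lakatos--Losonczi generalization --- is a genuinely different argument (for instance, one that controls how the zeros of the unperturbed combination $(p_{n}-r)\frac{z^{n+1}-1}{z-1}+r(1+z^{n})$ sit on $\mathbb{S}$ and how the perturbation moves them, or one carried out in the Chebyshev variable $x=z+1/z$), and your sketch does not supply it.
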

Other conditions for all the zeros of a PSR polynomial to lie on $\mathbb{S}$
was presented by Kwon in \cite{Kwon2011}. In its simplest form, Kown's
theorem can be enunciated as follows:
\begin{thm}
\textbf{\emph{(Kwon)}} Let $p(z)$ be a PSR polynomial of even degree
$n\geqslant2$ whose leading coefficient $p_{n}$ is positive and
$p_{0}\leqslant p_{1}\leqslant\cdots\leqslant p_{n}$. In this case,
all the zeros of $p(z)$ will lie on $\mathbb{S}$ if, either $p_{n/2}\geqslant\sum_{k=0}^{n}\left|p_{k}-p_{n/2}\right|$,
or $p(1)\geqslant0$ and $p_{n}\geqslant\frac{1}{2}\sum_{k=1}^{n-1}\left|p_{k}-p_{n/2}\right|$.
\end{thm}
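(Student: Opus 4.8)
The plan is to reduce the problem, via the substitution $x=z+1/z$ already used above, to controlling the real zeros of the associated polynomial $q(x)$ of degree $m=n/2$. Recall that for an even PSR polynomial we have $p(z)=z^{m}q(x)$, and that the correspondence $\zeta,1/\zeta\mapsto\xi=\zeta+1/\zeta$ sends each reciprocal pair of unimodular zeros of $p$ to a zero of $q$ in $[-2,2]$, while conversely every zero $\xi\in[-2,2]$ of $q$ comes from a pair of zeros of $p$ on $\mathbb{S}$. Since $\deg q=m$ and $\deg p=2m$, this is a bijection between the $m$ zeros of $q$ and the $m$ reciprocal pairs of zeros of $p$, so \emph{all} zeros of $p$ lie on $\mathbb{S}$ if and only if \emph{all} $m$ zeros of $q$ lie in $[-2,2]$. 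Setting $z=\mathrm{e}^{i\theta}$ turns this into a statement about the real cosine polynomial
\begin{equation}
g(\theta)=\mathrm{e}^{-im\theta}p(\mathrm{e}^{i\theta})=q(2\cos\theta)=p_{m}+2\sum_{k=0}^{m-1}p_{k}\cos\big((m-k)\theta\big),
\end{equation}
whose leading term $2p_{0}\cos(m\theta)$ has nonzero coefficient because $p_{n}=p_{0}>0$. As a polynomial of degree $m$ in $\cos\theta$, $g$ has at most $m$ zeros in $[0,\pi]$; hence it suffices to \emph{exhibit} $m$ of them, and the whole theorem becomes a sign-counting problem on $[0,\pi]$.

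The main tool is a comparison with the Dirichlet kernel $D_{m}(\theta)=1+2\sum_{j=1}^{m}\cos(j\theta)=\sin\big((2m+1)\theta/2\big)/\sin(\theta/2)$, which is exactly what $g$ reduces to in the degenerate case $p_{0}=\cdots=p_{m}$. At its $m+1$ extremal nodes $\theta_{j}=(2j-1)\pi/(2m+1)$, $j=1,\dots,m+1$, one has $|D_{m}(\theta_{j})|=1/|\sin(\theta_{j}/2)|\geqslant1$ with strictly alternating signs. The deviation of $g$ from this reference function is
\begin{equation}
g(\theta)-p_{m}D_{m}(\theta)=2\sum_{k=0}^{m-1}(p_{k}-p_{m})\cos\big((m-k)\theta\big),
\end{equation}
whose modulus is at most $2\sum_{k=0}^{m-1}|p_{k}-p_{m}|$ everywhere.

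Under the first hypothesis I would argue directly: using $p_{k}=p_{n-k}$, the condition $p_{n/2}\geqslant\sum_{k=0}^{n}|p_{k}-p_{n/2}|$ reads $p_{m}\geqslant2\sum_{k=0}^{m-1}|p_{k}-p_{m}|$, so at each node $|p_{m}D_{m}(\theta_{j})|\geqslant p_{m}$ dominates the deviation. Thus $g(\theta_{j})$ inherits the alternating sign of $p_{m}D_{m}(\theta_{j})$; the $m+1$ alternations force at least $m$ sign changes, hence $m$ zeros of $g$ in $(0,\pi)$, which is all of them. The only way a node can fail to fix the sign strictly is equality in the hypothesis (at $\theta_{m+1}=\pi$, where $|\sin(\theta_{j}/2)|=1$), and this merely produces a coincidence of zeros on $\mathbb{S}$, i.e.\ the borderline non-simple case.

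The delicate part is the second hypothesis, and I expect this to be the main obstacle. After simplification it amounts to $p(1)=g(0)\geqslant0$ together with $p_{0}\geqslant\sum_{k=1}^{m-1}|p_{k}-p_{m}|$, i.e.\ only the spread of the \emph{interior} coefficients is controlled, by the leading one, with no bound on the full deviation $2\sum_{k=0}^{m-1}|p_{k}-p_{m}|$. Here the crude term-by-term estimate is too weak, so the Dirichlet comparison must be replaced by a finer analysis: I would adopt a node set adapted to the leading term $2p_{0}\cos(m\theta)$, fix the sign at the left endpoint from $g(0)=p(1)\geqslant0$, and propagate the alternation across the remaining nodes by summation by parts (Abel's transformation), which turns the monotonicity $p_{0}\leqslant p_{1}\leqslant\cdots\leqslant p_{n/2}$ into nonnegativity of the relevant partial cosine sums. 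Securing every one of the $m+1$ signs from these three ingredients -- rather than from a single dominating inequality -- is the crux; once the alternation is in hand, the counting argument of the first case applies verbatim.
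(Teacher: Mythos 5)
The survey itself gives no proof of this theorem (Section 4 explicitly defers all proofs to the original sources, here Kwon's paper), so your attempt can only be judged on its own terms. Your reduction to the trigonometric polynomial $g(\theta)=p_{m}+2\sum_{k=0}^{m-1}p_{k}\cos\big((m-k)\theta\big)$ with $m=n/2$, and your treatment of the first alternative, are sound: the identity $g-p_{m}D_{m}=2\sum_{k=0}^{m-1}(p_{k}-p_{m})\cos\big((m-k)\theta\big)$, the bound $|D_{m}(\theta_{j})|\geqslant1$ at the alternation nodes, and the hypothesis rewritten as $p_{m}\geqslant2\sum_{k=0}^{m-1}|p_{k}-p_{m}|$ do force $m$ sign changes of $g$ on $(0,\pi]$, which is all the zeros of $q$; this is the standard Lakatos--Schinzel-type comparison argument and settles that case, up to the routine care needed when some $g(\theta_{j})$ vanishes.

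The second alternative, however, is not proved: what you give is a plan whose decisive step --- securing the sign of $g$ at every node of a node set adapted to the leading term $2p_{0}\cos(m\theta)$ using only $p(1)\geqslant0$, the monotonicity, and $p_{0}\geqslant\sum_{k=1}^{m-1}(p_{m}-p_{k})$ --- is exactly what you label ``the crux'' and leave open. Worse, this gap cannot be closed for the statement as transcribed in the survey. Read literally, $p_{0}\leqslant p_{1}\leqslant\cdots\leqslant p_{n}$ together with the PSR relations $p_{k}=p_{n-k}$ forces all coefficients to be equal, so the theorem is vacuous; under the intended reading $p_{0}\leqslant\cdots\leqslant p_{n/2}$ (which you silently adopt), the polynomial $p(z)=1+3z+z^{2}$ satisfies every hypothesis of the second alternative ($p_{2}=1>0$, $1\leqslant3$, $p(1)=5\geqslant0$, and $p_{2}=1\geqslant\tfrac{1}{2}|p_{1}-p_{1}|=0$) yet has both zeros real, at $(-3\pm\sqrt{5})/2$, hence off $\mathbb{S}$. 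So the sign propagation you hope to establish must fail somewhere; before attempting the summation-by-parts argument you need to restore Kwon's actual second condition from the original paper, since the version stated here is too weak for any proof to succeed.
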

Modified forms of this theorem hold for the PSR polynomials of odd
degree and for the case where the coefficients of $p(z)$ do not have
the ordination above --- see \cite{Kwon2011} for these cases. Kwon
also found conditions for all but two zeros of $p(z)$ to lie on $\mathbb{S}$
in \cite{Kwon2011B}, which is relevant to the theory of Salem polynomials
--- see Section \ref{SectionApplications}. 

Other interesting results are the following: Konvalina \& Matache
\cite{KonvalinaMatache2004} found conditions under which a PSR polynomial
has at least one non-real zero on $\mathbb{S}$. Kim \& Park \cite{KimPark2008}
and then Kim and Lee \cite{KimLee2010} presented conditions for which
all the zeros of certain PSR polynomials lie on $\mathbb{S}$ (some
open cases were also addressed by Botta \& al. in \cite{BottaBraccialiPereira2016}).
Suzuki \cite{Suzuki2012} presented necessary and sufficient conditions,
relying on matrix algebra and differential equations, for all the
zeros of PSR polynomials to lie on $\mathbb{S}$. In \cite{BottaMarquesMeneguette2014}
Botta \& al. studied the distribution of the zeros of PSR polynomials
with a small perturbation in their coefficients. Real SR polynomials
of height $1$ --- namely, special cases of \emph{Littlewood}, \emph{Newman}
and \emph{Borwein polynomials} --- were studied by several authors,
see \cite{ConreyEtal2000,Erdelyi2001B,Mossinghoff2003,Mercer2006,Mukunda2006,Drungilas2008,BaradaranTaghavi2014,BorweinEtal2015,DrungilasEtal2018}
and references therein\footnote{The zeros of such polynomials present a fractal behaviour, as was
first discovered by Odlyzko and Poonen in \cite{OdlyzkoPoonen1993}.}. Zeros of the so-called \emph{Ramanujan Polynomials} and generalizations
were analyzed in \cite{MurtySmythWang2011,LalinEtal2013,DiamantisEtal2018}.
Finally, the Galois theory of PSR polynomials was studied in \cite{Lindstrom2015}
by Lindstr{\o}m, who showed that any PSR polynomial of degree less
than $10$ can be solved by radicals.

\subsection{Complex self-reciprocal and self-inversive polynomials}

Let us consider now the case of complex SR polynomials and SI polynomials.
Here we remark that many of the theorems that hold for SI polynomials
either also hold for SR polynomials or can be easily adapted to this
case (the opposite is also true). 
\begin{thm}
\textbf{\emph{(Cohn)}} An SI polynomial $p(z)$ has as many zeros
outside $\mathbb{S}$ as does its derivative \textup{$p^{\prime}(z)$}. 
\end{thm}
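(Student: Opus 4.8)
The plan is to reduce the statement to a single real-part identity on the unit circle and then read off the two zero counts with the argument principle. Throughout I write $p=\omega\,p^{\dagger}$ with $|\omega|=1$ (see (\ref{self3})); since $p_{0}=\omega\,\overline{p_{n}}\neq 0$ the polynomial has no zero at the origin, and $\deg p'=n-1$. The first ingredient is the purely algebraic identity
\[
(p')^{\dagger}(z)=n\,p^{\dagger}(z)-z\,(p^{\dagger})'(z),
\]
valid for \emph{every} polynomial of degree $n$; it is obtained by differentiating $p^{\dagger}(z)=z^{n}\overline{p(1/\bar z)}$ from (\ref{self2}) and regrouping, exactly as in the familiar identity $(p')^{\ast}=n\,p^{\ast}-z\,(p^{\ast})'$ for the reciprocal. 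Substituting $p^{\dagger}=\bar\omega\,p$ and $(p^{\dagger})'=\bar\omega\,p'$ collapses it to
\[
(p')^{\dagger}(z)=\bar\omega\,\bigl(n\,p(z)-z\,p'(z)\bigr).
\]

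Next I would pass to moduli on $\mathbb{S}$. For any polynomial $g$ one has $|g^{\dagger}(z)|=|g(z)|$ when $|z|=1$, because then $g^{\dagger}(z)=z^{\deg g}\,\overline{g(z)}$. Applying this to $g=p'$ and using the boxed relation gives $|p'(z)|=|n\,p(z)-z\,p'(z)|$ for every $z\in\mathbb{S}$. At a point of $\mathbb{S}$ where $p(z)\neq 0$, dividing by $|p(z)|$ and writing $\phi(z)=z\,p'(z)/p(z)$ turns this into $|\phi(z)|=|\phi(z)-n|$, that is $\operatorname{Re}\phi(z)=\tfrac{n}{2}$. Hence $\phi$ maps the unit circle into the vertical line $\operatorname{Re}w=n/2$, which lies in the open right half-plane and avoids the origin.

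For the generic case, assume first that $p$ has no zero on $\mathbb{S}$; by the Gauss--Lucas corollary recorded above, $p'$ then has none either, so both counts are unambiguous. From $p'(z)=z^{-1}p(z)\,\phi(z)$ the change of argument around $\mathbb{S}$ splits as
\[
\Delta_{\mathbb{S}}\arg p'=\Delta_{\mathbb{S}}\arg p-\Delta_{\mathbb{S}}\arg z+\Delta_{\mathbb{S}}\arg\phi .
\]
The first term is $2\pi N_{\mathrm{in}}(p)$, the second is $2\pi$, and the third vanishes since by the previous paragraph the closed curve $\phi(\mathbb{S})$ stays on a line through the right half-plane and cannot wind around $0$. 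Therefore $N_{\mathrm{in}}(p')=N_{\mathrm{in}}(p)-1$, and since there are no zeros on $\mathbb{S}$, $N_{\mathrm{out}}(p')=(n-1)-N_{\mathrm{in}}(p')=n-N_{\mathrm{in}}(p)=N_{\mathrm{out}}(p)$, as claimed.

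The main obstacle is the case where $p$ has zeros on $\mathbb{S}$: there $\phi$ acquires simple poles on the contour, and $p'$ itself may vanish on $\mathbb{S}$, precisely at the multiple zeros of $p$. I would rerun the argument on a contour indented by small semicircles around each of the finitely many $\zeta_{0}\in\mathbb{S}$ with $p(\zeta_{0})=0$; near such a point $\phi(z)\sim \mu\,\zeta_0/(z-\zeta_{0})$, so one must balance the half-turn contributed by each indentation against the zero of order $\mu-1$ that $p'$ carries at $\zeta_{0}$. Verifying that these boundary contributions cancel, leaving $N_{\mathrm{out}}(p')=N_{\mathrm{out}}(p)$ intact, is the only genuine bookkeeping in the proof; alternatively the boundary case can be deduced from the generic one by an SI-preserving limiting argument in which off-circle inversive pairs $\zeta,1/\bar\zeta$ are let to degenerate onto $\mathbb{S}$.
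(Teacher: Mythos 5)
Your argument is essentially correct and is, in substance, the classical route to this result (it is the computation behind Cohn's original paper and Bonsall \& Marden's treatment) --- which is genuinely different from what the paper does, since the paper offers no proof here at all: it merely asserts that the statement ``follows directly'' from Theorem \ref{TheoremCohn1}, which as stated only covers the case in which the common count of exterior zeros is zero. Your chain of identities is sound: $(p')^{\dagger}=n\,p^{\dagger}-z\,(p^{\dagger})'$ checks out on coefficients, the SI relation collapses it to $(p')^{\dagger}=\bar{\omega}\,(np-zp')$, and taking moduli on $\mathbb{S}$ yields $\operatorname{Re}\bigl(zp'(z)/p(z)\bigr)=n/2$ wherever $p(z)\neq0$ on $\mathbb{S}$; the winding-number computation then gives $N_{\mathrm{in}}(p')=N_{\mathrm{in}}(p)-1$ and hence $N_{\mathrm{out}}(p')=N_{\mathrm{out}}(p)$. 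One slip: the claim that $p'$ has no zero on $\mathbb{S}$ when $p$ has none does \emph{not} follow from the Gauss--Lucas corollary quoted in the paper (that corollary concerns polynomials with \emph{all} zeros on $\mathbb{S}$); but it does follow from your own identity, since $\operatorname{Re}\phi=n/2\neq0$ forces $p'\neq0$ on $\mathbb{S}$ away from the zeros of $p$, so nothing is lost.

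The only real incompleteness is the boundary case, which you defer. It does close along the lines you indicate: indent $\mathbb{S}$ outward around each $\zeta_{0}\in\mathbb{S}$ with $p(\zeta_{0})=0$ of multiplicity $\mu$; on the indentation $\phi(z)\sim\mu\zeta_{0}/(z-\zeta_{0})$ sweeps a large arc from $n/2+i\infty$ to $n/2-i\infty$ through the far right half-plane, so the image of the whole indented contour still avoids the origin's half-plane and $\Delta\arg\phi=0$ persists. The count then reads $N_{\mathrm{in}}(p')+(k-r)=\bigl(N_{\mathrm{in}}(p)+k\bigr)-1$, where $k$ is the number of on-circle zeros of $p$ with multiplicity and $r$ the number of distinct ones, which again yields $N_{\mathrm{out}}(p')=N_{\mathrm{out}}(p)$. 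I would advise against your alternative limiting argument: an SI perturbation cannot push a simple circle zero off $\mathbb{S}$ without changing the degree or creating a partner, and exterior zero counts are not continuous when zeros sit on the contour, so that route needs more care than the indentation does.
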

This follows directly from Cohn's Theorem \ref{TheoremCohn1} for
the case where $p(z)$ is SI. Besides, we can also conclude from this
that the derivative of $p(z)$ has no zeros on $\mathbb{S}$ except
at the multiple zeros of $p(z)$. Furthermore, if an SI polynomial
$p(z)$ of degree $n$ has exactly $k$ zeros on $\mathbb{S}$, while
its derivative has exactly $l$ zeros in or on $\mathbb{S}$, both
counted with multiplicity, then $n=2(l+1)-k$. 

O'Hara \& Rodriguez \cite{OharaRodriguez1974} showed that the following
conditions are always satisfied by SI polynomials whose zeros are
all on $\mathbb{S}$:
\begin{thm}
\textbf{\emph{(O'Hara \& Rodriguez)}} Let $p(z)$ be an SI polynomial
of degree $n$ whose zeros are all on $\mathbb{S}$. Then, the following
inequality holds: $\sum_{j=0}^{n}|p_{j}|^{2}\leqslant\left\Vert p(z)\right\Vert ^{2}$,
where $\left\Vert p(z)\right\Vert $ denotes the maximum modulus of
$p(z)$ on the unit circle; besides, if this inequality is strict
then the zeros of $p(z)$ are rotations of $n$th roots of unity.
Moreover, the following inequalities are also satisfied: \textup{$\left|a_{k}\right|\leqslant\tfrac{1}{2}\left\Vert p(z)\right\Vert $
if $k\neq n/2$ and $\left|a_{k}\right|\leqslant\tfrac{\sqrt{2}}{2}\left\Vert p(z)\right\Vert $
for $k=n/2$.}
\end{thm}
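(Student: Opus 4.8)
The plan is to move everything onto the unit circle and linearise the self-inversive symmetry. Put $z=e^{i\phi}$ and let $\zeta_k=e^{i\theta_k}$ be the zeros, all on $\mathbb{S}$. The coefficient relation $p_k=\omega\overline{p_{n-k}}$ is equivalent to $\overline{p(e^{i\phi})}=\bar\omega\,e^{-in\phi}p(e^{i\phi})$, so that
\[
v(\phi)=\omega^{-1/2}e^{-in\phi/2}p(e^{i\phi})
\]
is real-valued, with $\max_\phi|v(\phi)|=\|p(z)\|$. Factoring each root, $v(\phi)=\kappa\prod_{k=1}^{n}\sin\frac{\phi-\theta_k}{2}$ with $\kappa\in\mathbb{R}$: a real trigonometric polynomial of degree $n/2$ all of whose zeros in $\phi$ are real. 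Its Fourier coefficients obey $\widehat v(\tfrac n2-k)=\omega^{-1/2}p_k$, whence $|p_k|=|\widehat v(\tfrac n2-k)|$. Thus every assertion becomes a statement about the Fourier coefficients of a bounded real trigonometric polynomial; the middle coefficient $p_{n/2}$ corresponds to the mean $\widehat v(0)$, and the extreme coefficients $p_0,p_n$ to the top frequencies $\widehat v(\pm n/2)$. For odd $n$ the frequencies are half-integers, which I would accommodate by working on the doubled period $[0,4\pi)$.

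The $L^2$ inequality is then just the mean-versus-maximum bound and requires no hypothesis on $p$: by Parseval,
\[
\sum_{j=0}^{n}|p_j|^2=\frac{1}{2\pi}\int_0^{2\pi}\bigl|p(e^{i\phi})\bigr|^2\,d\phi\le\max_\phi\bigl|p(e^{i\phi})\bigr|^2=\|p(z)\|^2 .
\]
Since $p$ has zeros on $\mathbb{S}$, $|p|$ vanishes there and cannot be constant, so for $n\ge1$ this inequality is in fact always strict; the genuine roots-of-unity phenomenon therefore lives in the coefficient inequalities, whose equality case I characterise next.

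For the coefficient bounds I would first settle the extreme coefficients by a sampling/aliasing argument that simultaneously produces the extremal case. Sampling $v$ at the $n$ equally spaced nodes $\phi_j=\phi_0+2\pi j/n$ and forming $S=\frac1n\sum_{j=0}^{n-1}(-1)^j v(\phi_j)$, all intermediate frequencies cancel and only $\pm n/2$ survive, giving $S=2\operatorname{Re}\bigl(e^{in\phi_0/2}\widehat v(n/2)\bigr)$; choosing $\phi_0$ to align the phase yields $2|\widehat v(n/2)|=|S|\le\|v\|=\|p(z)\|$, i.e.\ $|p_0|=|p_n|\le\tfrac12\|p(z)\|$. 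This is equivalent to the Chebyshev-type inequality $\max_\phi\prod_k|\sin\frac{\phi-\theta_k}{2}|\ge 2^{-(n-1)}$, and equality forces $v$ to equioscillate at the nodes, i.e.\ $v\propto\cos(\tfrac n2\phi-\gamma)$ and $p(z)=p_n(z^n-c)$ with $|c|=1$ --- exactly the case in which the zeros are rotations of the $n$th roots of unity.

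The main obstacle is that this sampling fails for the intermediate coefficients: isolating a single low frequency $m=\tfrac n2-k$ aliases higher harmonics back onto it. Indeed $2|\widehat v(m)|\le\|v\|$ is false for a generic bounded real trigonometric polynomial of degree $n/2$ (high-degree trigonometric approximants to $\operatorname{sign}(\cos m\phi)$ already violate it), so the bound here genuinely requires that $v$ have only real zeros, equivalently that the zeros of $p$ lie on $\mathbb{S}$. The hard step is to convert this real-zeros structure into a uniform control of every Fourier coefficient; I would attempt either a variational argument showing that, for fixed $\|p(z)\|$ and fixed $|p_k|$, the extremal configuration equalises the $\theta_k$, or an induction that peels off one conjugate pair of zeros at a time. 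The middle coefficient $m=0$ is the borderline case: the mean $\widehat v(0)$ has no distinct conjugate partner to pair with, which is precisely why its sharp constant relaxes from $\tfrac12$ to $\tfrac{1}{\sqrt2}$.
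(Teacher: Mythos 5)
The paper itself offers no proof to compare against: Section~\ref{SectionTheorems} explicitly defers all proofs to the original works, here O'Hara \& Rodriguez \cite{OharaRodriguez1974}. Judged on its own terms, your attempt has a genuine gap. The reduction is sound and standard: for an SI polynomial the function $v(\phi)=\omega^{-1/2}e^{-in\phi/2}p(e^{i\phi})$ is real-valued with $\max_\phi|v(\phi)|=\left\Vert p(z)\right\Vert$ and $|\widehat v(k-n/2)|=|p_k|$; the Parseval step is correct (and you rightly observe that the $L^2$ inequality as stated in the survey holds for \emph{every} polynomial and that the strictness clause looks garbled --- the original result carries a factor $\tfrac12$ on the right and characterizes the \emph{equality} case); and the alternating-sign sampling argument is a complete, correct proof that $|p_0|=|p_n|\leqslant\tfrac12\left\Vert p(z)\right\Vert$, together with its equality case $p(z)=p_n(z^n-c)$, $|c|=1$. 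But the heart of the theorem is the bound $|p_k|\leqslant\tfrac12\left\Vert p(z)\right\Vert$ for \emph{every} $k\neq n/2$, plus $|p_{n/2}|\leqslant\tfrac{\sqrt2}{2}\left\Vert p(z)\right\Vert$, and there you stop: you correctly diagnose that sampling aliases higher harmonics onto an intermediate frequency and that the hypothesis that all zeros lie on $\mathbb{S}$ must now enter, but you only name two candidate strategies (a variational extremal argument, or peeling off one conjugate pair of zeros at a time) without carrying either out. Neither is routine --- the variational route requires identifying and justifying the extremal configuration, and the peeling induction does not obviously preserve either the hypothesis or the constant --- so the main assertion remains unproved.

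What is missing, concretely, is a lemma converting the structural fact you isolated --- that $v$ is a real trigonometric polynomial of degree $n/2$ possessing the maximal number $n$ of real zeros per period --- into a uniform bound $2|\widehat v(m)|\leqslant\max_\phi|v(\phi)|$ for all frequencies $m\neq0$ and $\sqrt2\,|\widehat v(0)|\leqslant\max_\phi|v(\phi)|$. Realness of $v$ alone (i.e.\ self-inversiveness alone) does not suffice for intermediate $m$, as your own counterexample with approximants to $\operatorname{sign}(\cos m\phi)$ shows, so this lemma is precisely where the on-circle hypothesis does its work; until it is supplied, the proposal establishes only the trivial part of the theorem and the two extreme coefficients.
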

Schinzel in \cite{Schinzel2005}, generalized Lakatos Theorem \ref{TheoremLakatos1}
for SI polynomials:
\begin{thm}
\textbf{\emph{(Schinzel)}} Let $p(z)$ be an SI polynomial of degree
$n$. If the inequality $\left|p_{n}\right|\geqslant\inf_{a,b\in\mathbb{C}:|b|=1}\sum_{k=0}^{n}\left|ap_{k}-b^{n-k}p_{n}\right|$,
then all the zeros of $p(z)$ lie on $\mathbb{S}$. These zero are
simple whenever the equality is strict.
\end{thm}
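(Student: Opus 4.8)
The plan is to reduce the statement to counting the sign changes of a real-valued function on the unit circle, generalizing the proof of Lakatos' Theorem~\ref{TheoremLakatos1}. Since $p$ is SI we have $p=\omega p^{\dagger}$ with $|\omega|=1$ and $p_{k}=\omega\overline{p_{n-k}}$. Fixing a square root $\alpha$ of $\omega$ and putting $z=e^{i\theta}$, I would first check that $g(\theta)=\overline{\alpha}\,e^{-in\theta/2}p(e^{i\theta})$ is real-valued: the SI relation pairs the $k$-th and $(n-k)$-th terms of $p$ into $2\,\mathrm{Re}\big(\overline{\alpha}\,p_{k}e^{i(k-n/2)\theta}\big)$. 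The zeros of $g$ in a period correspond, with multiplicity, to the zeros of $p$ on $\mathbb{S}$, and since $p$ has exactly $n$ zeros in $\mathbb{C}$ it suffices to exhibit $n$ zeros of $g$.

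Next I would absorb the parameters $a,b$ by an invariance argument. Replacing $p(z)$ by $a\,p(bz)$ with $a\neq0$ and $|b|=1$ preserves the SI property (the new unimodular constant is $\omega b^{n}a/\overline{a}$) and moves no zero onto or off $\mathbb{S}$. Setting $\tilde p_{k}=a\,p_{k}b^{k}$ and $c=b^{n}p_{n}$, one has $|\tilde p_{k}-c|=|a p_{k}-b^{n-k}p_{n}|$ and $|c|=|p_{n}|$, so the hypothesis becomes $|p_{n}|\geq\sum_{k=0}^{n}|\tilde p_{k}-c|$ for an optimal pair $(a,b)$; this recasts the problem as a comparison of $\tilde p$ with the polynomial $c(1+z+\cdots+z^{n})$, whose $n$ zeros are equally spaced on $\mathbb{S}$. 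That the infimum is attained follows from a compactness argument: the sum tends to $(n+1)|p_{n}|$ as $|a|\to0$ or $|a|\to\infty$, so a minimizing sequence stays in a compact subset of $\{a\neq0\}\times\mathbb{S}$, and the phase of $a$ may be fixed so that the realifying constant for $\tilde p$ coincides with the one for $c(1+z+\cdots+z^{n})$.

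The analytic core is then a single estimate. Writing the real function $\tilde g$ attached to $\tilde p$ and splitting off the constant reference gives $\tilde g(\theta)=|p_{n}|\,\Phi(\theta)+E(\theta)$, where $\Phi(\theta)=\sin\big((n{+}1)\theta/2\big)/\sin(\theta/2)$ is the Dirichlet kernel and the error satisfies $|E(\theta)|\leq\sum_{k=0}^{n}|\tilde p_{k}-c|\leq|p_{n}|$. At the test points $\theta_{m}=2\pi m/n$ one computes $\Phi(\theta_{m})=(-1)^{m}$ for $1\leq m\leq n-1$ and $\Phi(0)=n+1$. Hence $\tilde g(0)>0$, because the peak $(n+1)|p_{n}|$ dominates $|E|$, while for $1\leq m\leq n-1$ the term $(-1)^{m}|p_{n}|$ dominates $E(\theta_{m})$, so $\tilde g(\theta_{m})$ has the alternating sign $(-1)^{m}$. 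Consecutive test points of opposite sign then force a zero of $\tilde g$ between them.

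Finally I would count. The alternation produces $n-2$ zeros among $\theta_{1},\ldots,\theta_{n-1}$ and, using $\tilde g(0)>0$, one further zero in $(0,\theta_{1})$; for even $n$ the arc from $\theta_{n-1}$ back to $0$ supplies the last one, giving $n$ zeros of $g$ and hence all zeros of $p$ on $\mathbb{S}$. For odd $n$ the function $g$ is antiperiodic, $g(\theta+2\pi)=-g(\theta)$, and the missing sign change is supplied by the zero that an SI polynomial of odd degree must have on $\mathbb{S}$ (the first theorem of Section~\ref{SectionBasics}). When the hypothesis is strict each domination is strict, every sign change is transversal and the forced zero is simple, so all the zeros are simple. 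I expect the genuine difficulty to be bookkeeping rather than analysis: aligning the phase $\alpha$ and the rotation $b$ so that the reference reduces exactly to the real Dirichlet kernel, and pinning the final count to be precisely $n$ in the odd-degree case; the entire quantitative content lies in the elementary inequality $|E(\theta_{m})|\leq|p_{n}|$.
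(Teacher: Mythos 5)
First, a point of reference: the paper itself gives no proof of Schinzel's theorem --- Section \ref{SectionTheorems} explicitly defers all proofs to the original works --- so your attempt can only be judged on its own terms. Your overall architecture is the right one and matches the Lakatos-type arguments this theorem generalizes: realify $z^{-n/2}p(z)$ on $\mathbb{S}$ via the SI relation, compare with the Dirichlet kernel $\Phi(\theta)=\sin((n+1)\theta/2)/\sin(\theta/2)$ coming from $c(1+z+\cdots+z^{n})$, and count sign alternations at points where $|\Phi|=1$. The reduction $\tilde p_k=ap_kb^k$, $c=b^np_n$, the computation $\Phi(2\pi m/n)=(-1)^m$, the attainment of the infimum by compactness, and the zero count (including the antiperiodicity for odd $n$, which by itself already supplies the last sign change --- you do not need to invoke the odd-degree theorem separately) are all correct.

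The genuine gap is the step you dismiss as bookkeeping: ``the phase of $a$ may be fixed so that the realifying constant for $\tilde p$ coincides with the one for $c(1+z+\cdots+z^{n})$.'' You cannot fix that phase for free, because changing $\arg a$ changes the quantity $\sum_{k}|ap_{k}-b^{n-k}p_{n}|$ that you minimized; after rotating $a$ the bound $\sum_{k}|\tilde p_{k}-c|\leqslant|p_{n}|$ may be lost. Without alignment your decomposition reads $\tilde g(\theta)=\mathrm{Re}(\overline{\alpha'}c)\,\Phi(\theta)+\mathrm{Re}\,E(\theta)$ with $|\mathrm{Re}(\overline{\alpha'}c)|$ possibly strictly smaller than $|c|=|p_{n}|$, and then the domination $|\mathrm{Re}(\overline{\alpha'}c)|\geqslant\sum_{k}|\tilde p_{k}-c|$ --- the entire quantitative content of the proof --- is no longer guaranteed. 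This is precisely the point where the self-inversive hypothesis must interact with the minimization, so it deserves a real argument. Two repairs are available. (i) Symmetrization: with $\tau=\overline{p_{0}}/\overline{p_{n}}$ one checks, using $p_{k}=\omega\overline{p_{n-k}}$ and $|p_{0}|=|p_{n}|$, that $\sum_{k}|\tau\overline{a}p_{k}-p_{n}|=\sum_{k}|ap_{k}-p_{n}|$; hence $a^{*}=\tfrac{1}{2}(a+\tau\overline{a})$ satisfies $a^{*}=\tau\overline{a^{*}}$ (the aligned phase) and, by the triangle inequality, does not increase the sum. (ii) Alternatively, alignment is unnecessary: writing $\overline{\alpha'}c=|c|e^{i\beta}$, the identity $0=|c|\sin\beta\,\Phi(\theta_m)+\mathrm{Im}\,E(\theta_m)$ forces $|\mathrm{Re}\,E(\theta_m)|^{2}\leqslant S^{2}-|c|^{2}\sin^{2}\beta\leqslant|c|^{2}\cos^{2}\beta$ at every test point with $|\Phi(\theta_m)|=1$, which is exactly the domination you need. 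Either patch completes your argument; as written, the proposal asserts the crucial step rather than proving it.
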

In a similar way, Losonczi \& Schinzel \cite{LosoncziSchinzel2007}
generalized theorem \ref{TheoremLakatosLosonczi1} for the SI case: 
\begin{thm}
\textbf{\emph{(Losonczi \& Schinzel)}} Let $p(z)$ be an SI polynomial
of odd degree, i.e. $n=2m+1$. If $\left|p_{2m+1}\right|\geqslant\cos^{2}\left(\phi_{m}\right)\inf_{a,b\in\mathbb{C}:|b|=1}\sum_{k=1}^{2m+1}\left|ap_{k}-b^{2m+1-k}p_{2m+1}\right|$,
where $\phi_{m}=\pi/\left[4\left(m+1\right)\right]$, then all the
zeros of $p(z)$ lie on $\mathbb{S}$. The zeros are simple except
when the equality is strict.
\end{thm}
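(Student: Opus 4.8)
The plan is to reduce the statement to a zero-counting problem for a real trigonometric polynomial and then settle it by a Rouch\'e-type comparison against an explicit kernel whose zeros are known. First I would use the self-inversive relation $p(z)=\omega p^{\dagger}(z)$ with $|\omega|=1$ to put $p$ on the unit circle in balanced form: writing $z=\mathrm{e}^{i\theta}$ and extracting $\mathrm{e}^{in\theta/2}$ together with a fixed square root of $\omega$, the SI symmetry $p_k=\omega\overline{p_{n-k}}$ makes the imaginary parts cancel, so that $A(\theta):=\mathrm{e}^{-in\theta/2}\lambda\,p(\mathrm{e}^{i\theta})$ is real-valued for a suitable unimodular constant $\lambda$. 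Since $|A(\theta)|=|p(\mathrm{e}^{i\theta})|$, the zeros of $p$ on $\mathbb{S}$ are exactly the zeros of the real function $A$ in $[0,2\pi)$; as $p$ has degree $n$, it suffices to exhibit $n$ zeros of $A$ there (counted with multiplicity) to force every zero of $p$ onto $\mathbb{S}$.

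Next I would account for the two free parameters $a,b$ in the infimum. Replacing $p(z)$ by $a\,p(bz)$ with $|b|=1$ sends $p_k\mapsto ab^{k}p_k$, preserves the SI property, and rotates/scales the zeros without moving any of them off or onto $\mathbb{S}$; multiplying by the unimodular factor $b^{k}$ gives $|ap_k-b^{n-k}p_n|=|ab^{k}p_k-b^{n}p_n|$, so that $\inf_{a,b}\sum_k|ap_k-b^{n-k}p_n|$ is precisely the self-inversive invariant that replaces the PSR quantity $\sum_k|p_n-p_k|$ of Theorem~\ref{TheoremLakatosLosonczi1}. Taking the infimum selects the rotation and scaling making the comparison sharpest, so I may assume throughout that $p$ is written in this optimal normalized form and carry out the estimate there.

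The heart of the argument is the sign-change count, and this is where the constant $\cos^{2}(\phi_m)$ enters. The dominant part of $A(\theta)$ is the pair of extreme terms, which combine to $2|p_n|\cos(n\theta/2-\psi)$ for some phase $\psi$; this has exactly $n$ simple zeros in $[0,2\pi)$, and for odd $n=2m+1$ one zero of $p$ already lies on $\mathbb{S}$ by Theorem~1, leaving $2m$ to be located. I would choose $n$ evaluation points with spacing governed by $\phi_m=\pi/\!\left[4(m+1)\right]=\pi/\!\left[2(n+1)\right]$, at which the dominant term attains alternating signs with magnitude bounded below by $\cos\phi_m$ (and by $\cos^{2}\phi_m$ after the further factor supplied by the odd-degree reduction). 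Bounding the remaining terms by $\sum_{k=1}^{2m}|ap_k-b^{2m+1-k}p_{2m+1}|$, the hypothesis $|p_{2m+1}|\geqslant\cos^{2}(\phi_m)\inf_{a,b}\sum_k|ap_k-b^{2m+1-k}p_{2m+1}|$ is exactly what makes this remainder strictly smaller than the dominant term at each chosen point, so $A$ inherits the alternating sign pattern and therefore has a zero in each of the $n$ intervals between consecutive points.

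The main obstacle I anticipate is obtaining the sharp constant $\cos^{2}(\phi_m)$ rather than a crude one. The naive estimate $|\cos((k-n/2)\theta)|\leqslant1$ only yields the condition with constant $1$, which is Schinzel's general bound; extracting the extra factor $\cos^{2}(\phi_m)$ requires the optimal placement of the sampling grid of size $2(m+1)$ together with a careful lower bound on the dominant term at those points, exploiting that the odd-degree symmetry already accounts for one on-circle zero. Once the $n$ strict sign changes are in hand the count is exact; and when the hypothesis holds with strict inequality the sign changes are strict at every sample point, which excludes double roots and gives the asserted simplicity, equality being the only regime in which a multiple zero on $\mathbb{S}$ can appear.
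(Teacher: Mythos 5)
A preliminary remark: the survey does not prove this theorem. It appears in Section 4.3 under the blanket disclaimer that proofs are omitted and are to be found in the original works (here Losonczi and Schinzel's paper), so there is no in-paper argument to compare yours against; I can only assess your outline on its own terms.

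Your general strategy --- pass to a real-valued function $A(\theta)=\lambda\,\mathrm{e}^{-in\theta/2}p(\mathrm{e}^{i\theta})$ on the circle, sample it at $n+1$ points, and force $n$ sign changes by dominating a remainder --- is the right family of argument for Lakatos-type criteria, and your use of $|b|=1$ to absorb $b$ into a rotation is sound. The gap sits exactly where you flag it, in the extraction of $\cos^{2}(\phi_m)$, and the mechanism you propose cannot deliver it. First, the dominant term is misidentified. The two extreme monomials combine to $2|p_n|\cos(n\theta/2-\psi)$, all of whose local extrema have modulus exactly $2|p_n|$; no placement of a sampling grid makes this term exceed $2|p_n|$, so it can never contribute a gain of $1/\cos(\phi_m)$, and the comparison it supports is the cruder criterion $\left|p_{n}\right|\geqslant\frac{1}{2}\sum_{k=1}^{n-1}\left|p_{k}\right|$ rather than one phrased in terms of $\sum_{k}\left|ap_{k}-b^{n-k}p_{n}\right|$. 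The shape of that summand dictates the correct decomposition, namely $ap(z)=p_{n}\sum_{k=0}^{n}b^{n-k}z^{k}+\sum_{k=0}^{n}\left(ap_{k}-b^{n-k}p_{n}\right)z^{k}$, whose dominant kernel is the rotated Dirichlet kernel $p_{n}\left(z^{n+1}-b^{n+1}\right)/\left(z-b\right)$. At the $n+1$ midpoints $\theta_j$ between its circle zeros this kernel has modulus $\left|p_{n}\right|/\left|\sin\left((\theta_{j}-\beta)/2\right)\right|$ with $b=\mathrm{e}^{i\beta}$, and for odd $n=2m+1$ none of these midpoints is antipodal to $b$ --- the nearest ones miss by $\pi/(n+1)$ --- which gives $\left|\sin\right|\leqslant\cos\left(\pi/(2(n+1))\right)=\cos(\phi_{m})$ and hence a main term of size at least $\left|p_{n}\right|/\cos(\phi_{m})$. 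That is one factor of $\cos(\phi_m)$.

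The second factor does not come, as you suggest, from the odd-degree parity theorem ``already accounting for one zero on $\mathbb{S}$''; that fact plays no role in the estimate. It comes from pairing the remainder terms $k$ and $n-k$ (using the SI symmetry) into combinations governed by $\cos\left((k-n/2)(\theta-\beta)\right)$ and observing that for odd $n$ the frequencies $k-n/2$ are half-integers, so that at the sample points the arguments of these cosines are odd multiples of $\phi_m$ modulo $\pi$ and hence the cosines are bounded in modulus by $\cos(\phi_m)$. (In the PSR case of Theorem \ref{TheoremLakatosLosonczi1} this is immediate; for general SI polynomials controlling the residual phases in these paired terms is precisely the additional work the original proof must do.) Without replacing your kernel and supplying this pairing estimate, your argument tops out at the constant $1$, i.e., at Schinzel's general criterion, and does not reach $\cos^{2}(\phi_{m})$. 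A smaller but genuine issue: you cannot simply ``assume $p$ is in the optimal normalized form'', both because the infimum need not be attained and because replacing $p(z)$ by $ap(bz)$ also rescales the leading coefficient appearing on the left-hand side of the hypothesis; the clean route is to fix any admissible pair $(a,b)$ for which the stated inequality holds and run the decomposition above with those values.
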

Another sufficient condition for all the zeros of an SI polynomial
to lie on $\mathbb{S}$ was presented by Lakatos \& Losonczi in \cite{LakatosLosonczi2004}:
\begin{thm}
\textbf{\emph{(Lakatos \& Losonczi)}} Let $p(z)$ be an SI polynomial
of degree $n$ and suppose that the inequality $\left|p_{n}\right|\geqslant\frac{1}{2}\sum_{k=1}^{n-1}\left|p_{k}\right|$
holds. Then, all the zeros of $p(z)$ lie on $\mathbb{S}$. Moreover,
the zeros are all simple except when an equality takes place.
\end{thm}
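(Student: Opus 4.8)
The plan is to restrict $p$ to the unit circle, use the self-inversive symmetry to convert $p(e^{i\theta})$ (up to a unimodular factor) into a \emph{real-valued} trigonometric function, and then count its sign changes. Writing the self-inversive condition as $p(z)=\omega\,p^{\dagger}(z)$ with $\omega=e^{i\alpha}$, $|\omega|=1$, the coefficient relations $p_{k}=\omega\,\overline{p_{n-k}}$ give in particular $|p_{0}|=|p_{n}|$. I would set $c_{k}=e^{-i\alpha/2}p_{k}$, so that $c_{k}=\overline{c_{n-k}}$, and define
\begin{equation}
F(\theta)=e^{-i\alpha/2}\,e^{-in\theta/2}\,p\!\left(e^{i\theta}\right)=\sum_{k=0}^{n}c_{k}\,e^{i(k-n/2)\theta}.
\end{equation}
A direct computation using $c_{k}=\overline{c_{n-k}}$ shows that $\overline{F(\theta)}=F(\theta)$, so $F$ is real for real $\theta$, and moreover $F(\theta+2\pi)=(-1)^{n}F(\theta)$. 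Since $F$ differs from $p(e^{i\theta})$ only by a nonvanishing analytic factor, the zeros of $p$ lying on $\mathbb{S}$ are exactly the zeros of $F$ for $\theta\in[0,2\pi)$, counted with the same multiplicity; it therefore suffices to exhibit $n$ such zeros, because $p$ has only $n$ zeros in total and this forces all of them onto $\mathbb{S}$.

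Next I would split $F$ into a dominant envelope and a perturbation. Pairing the extreme terms $k=0$ and $k=n$ and writing $c_{n}=|p_{n}|e^{i\beta}$ gives
\begin{equation}
F(\theta)=G(\theta)+R(\theta),\qquad G(\theta)=2|p_{n}|\cos\!\left(\tfrac{n\theta}{2}+\beta\right),\qquad R(\theta)=\sum_{k=1}^{n-1}c_{k}\,e^{i(k-n/2)\theta}.
\end{equation}
The key estimate is the elementary bound $|R(\theta)|\leqslant\sum_{k=1}^{n-1}|c_{k}|=\sum_{k=1}^{n-1}|p_{k}|$, which by the hypothesis $|p_{n}|\geqslant\tfrac{1}{2}\sum_{k=1}^{n-1}|p_{k}|$ is at most $2|p_{n}|$. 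Thus $|R(\theta)|\leqslant 2|p_{n}|$ for every $\theta$, exactly matching the amplitude of the envelope $G$. This precise matching is the reason the constant $\tfrac{1}{2}$ appears in the statement.

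Finally I would evaluate $F$ at the $n$ successive extrema $\theta_{1}<\cdots<\theta_{n}$ of $G$ in $[0,2\pi)$, where $G(\theta_{m})=\pm 2|p_{n}|$ with alternating signs. Because $|R(\theta_{m})|\leqslant 2|p_{n}|=|G(\theta_{m})|$, the sign of $F(\theta_{m})$ agrees with that of $G(\theta_{m})$; hence $F$ alternates in sign across the successive extrema and, by the intermediate value theorem, vanishes between each consecutive pair. Combining the $n-1$ interior sign changes with the wrap-around from $\theta_{n}$ back to $\theta_{1}+2\pi$ via $F(\theta+2\pi)=(-1)^{n}F(\theta)$ produces exactly $n$ zeros in $[0,2\pi)$, which are all the zeros of $p$. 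The main obstacle is this cyclic sign-change count: one must verify that the wrap-around interval genuinely supplies the $n$th zero (this is where the anti-periodicity for odd $n$ enters) and that the $n$ sign changes yield $n$ \emph{distinct} zeros on $\mathbb{S}$. When the hypothesis is strict, $|R(\theta_{m})|<2|p_{n}|$ forces strict sign alternation, so the $n$ zeros are simple; equality can occur only when $R$ saturates the bound at some extremum, precisely the case in which $F$ may touch zero tangentially and a multiple zero can appear.
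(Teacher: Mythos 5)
The survey itself omits the proof of this theorem (Section 4 defers all proofs to the original works), so your proposal has to be measured against the argument in the cited Lakatos--Losonczi paper, which it essentially reproduces: pass to the real function $F(\theta)=e^{-i\alpha/2}e^{-in\theta/2}p(e^{i\theta})$, isolate the envelope $2|p_{n}|\cos(n\theta/2+\beta)$ coming from the extreme coefficients (using $|p_{0}|=|p_{n}|$), bound the middle block by $\sum_{k=1}^{n-1}|p_{k}|\leqslant2|p_{n}|$, and count sign alternations at the $n$ extrema of the envelope in one period. All the identities you assert ($c_{k}=\overline{c_{n-k}}$, reality of $F$, $F(\theta+2\pi)=(-1)^{n}F(\theta)$, the form of $G$) check out, and in the strict case the argument is complete: the $F(\theta_{m})$ are then nonzero with alternating signs at the $n$ extrema $\theta_{1}<\cdots<\theta_{n}<\theta_{1}+2\pi$, each of the $n$ disjoint open intervals between consecutive extrema contains a zero of $F$, these give $n$ distinct points of $\mathbb{S}$, and since $p$ has only $n$ zeros they are all simple and all lie on $\mathbb{S}$.

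The step you flag but do not close is genuinely where the proof can leak, and it is not only the wrap-around: under the non-strict hypothesis you only get $F(\theta_{m})G(\theta_{m})\geqslant0$, so some $F(\theta_{m})$ may vanish, and a zero sitting at a shared endpoint $\theta_{m}$ is claimed by both adjacent closed intervals; a naive count then yields only $n-1$ zeros. Two standard repairs: (i) multiplicity bookkeeping --- if $F(\theta_{m})=0$ and $F$ does not change sign there, then $\theta_{m}$ is a zero of even order at least $2$ of the real-analytic function $F$, and a careful pass along the cyclically ordered extrema shows the zeros in one period still number at least $n$ counted with multiplicity, which suffices because they pull back to zeros of $p$ of the same multiplicity; or (ii) perturbation --- apply the strict case to $p_{\varepsilon}(z)=p_{0}+(1-\varepsilon)\sum_{k=1}^{n-1}p_{k}z^{k}+p_{n}z^{n}$, which is still SI with the same $\omega$ and satisfies the strict inequality for $0<\varepsilon<1$, then let $\varepsilon\to0^{+}$ using continuity of the zeros and closedness of $\mathbb{S}$. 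Either route finishes the equality case; without one of them the proof is incomplete as written. One small correction along the way: ``the sign of $F(\theta_{m})$ agrees with that of $G(\theta_{m})$'' should read ``agrees with it or $F(\theta_{m})=0$'' unless the hypothesis is strict.
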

In \cite{LakatosLosonczi2009} Lakatos \& Losonczi also formulated
a theorem that contains as special cases many of the previous results:
\begin{thm}
\textbf{\emph{(Lakatos \& Losonczi)}} Let $p(z)=p_{0}+\cdots+p_{n}z^{n}$
be an SI polynomial of degree $n\geqslant2$ and $a$, $b$ and $c$
be complex numbers such that $a\neq0$, $|b|=1$ and $c/p_{n}\in\mathbb{R}$,
$0\leqslant c/p_{n}\leqslant1$. If $\left|p_{n}+c\right|\geqslant\left|ap_{0}-b^{n}p_{n}\right|+\sum_{k=1}^{n-1}\left|ap_{k}-b^{n-k}\left(c-p_{n}\right)\right|+\left|ap_{n}-p_{n}\right|$,
then, all the zeros of $p(z)$ lie on $\mathbb{S}$. Moreover, these
zeros are simple if the inequality is strict.
\end{thm}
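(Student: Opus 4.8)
The plan is to convert the assertion into a statement about the number of sign changes of a single real function on the unit circle, and then to force $n$ of them by comparison with an explicit trigonometric kernel built from $a,b,c$. First I would linearise the self-inversive symmetry. Since $p$ is SI we have $p(z)=\omega p^{\dagger}(z)$ with $|\omega|=1$, and on $\mathbb{S}$ the relation $p^{\dagger}(z)=z^{n}\overline{p(z)}$ gives $p(e^{i\theta})=\omega e^{in\theta}\overline{p(e^{i\theta})}$. Writing $\omega=e^{2i\gamma}$, the normalised function
\[
g(\theta)=e^{-i\gamma}e^{-in\theta/2}\,p(e^{i\theta})
\]
is real-valued, and its zeros in $[0,2\pi)$ are exactly the arguments of the unimodular zeros of $p$, counted with multiplicity. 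Because the non-unimodular zeros of an SI polynomial come in pairs $\zeta,1/\overline{\zeta}$ on opposite sides of $\mathbb{S}$, all zeros of $p$ lie on $\mathbb{S}$ if and only if $g$ has $n$ zeros; it therefore suffices to produce $n$ sign changes of $g$. For odd $n$ the factor $e^{-in\theta/2}$ makes $g$ anti-periodic, $g(\theta+2\pi)=-g(\theta)$, a fact I will need for the parity of the final count.

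Next I would introduce the comparison polynomial dictated by the three groups of terms in the hypothesis,
\[
G(z)=b^{n}p_{n}+\sum_{k=1}^{n-1}b^{\,n-k}(c-p_{n})\,z^{k}+p_{n}z^{n},
\]
which is again SI and shares the leading and trailing behaviour of $p$. Putting $t=c/p_{n}\in[0,1]$, a direct computation with $z=b\,e^{i\phi}$ reduces $G$ on $\mathbb{S}$ to the real kernel
\[
\rho(\phi)=2\cos\!\tfrac{n\phi}{2}-(1-t)\,\frac{\sin\!\big((n-1)\phi/2\big)}{\sin(\phi/2)},
\]
whose values at the nodes $\phi_{j}=2\pi j/n$ satisfy $\rho(\phi_{j})=(-1)^{j}(3-t)$ for $1\le j\le n-1$, while $\rho(0)=2-(1-t)(n-1)$. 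Since the coefficients of $ap(z)-G(z)$ are precisely $ap_{0}-b^{n}p_{n}$, the $ap_{k}-b^{n-k}(c-p_{n})$, and $ap_{n}-p_{n}$, the triangle inequality on $\mathbb{S}$ and the hypothesis give $|ap(e^{i\theta})-G(e^{i\theta})|\le|p_{n}+c|=(1+t)|p_{n}|$ for all $\theta$. As $3-t\ge 1+t$ when $t\le 1$, at each interior node the term $\rho(\phi_{j})|p_{n}|$ dominates the error, so the real reduction of $ap$ inherits the alternating sign $(-1)^{j}$ there.

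These $n-1$ sign-definite, alternating nodes already yield $n-2$ sign changes, and the remaining changes must be extracted from the single arc around the node $z=b$ (that is, $\phi=0$); closing this gap is where the real work lies. I would finish the count by combining the anti-periodicity (odd $n$) or periodicity (even $n$) of $g$ with the behaviour of $\rho$ near $\phi=0$, and it is exactly here that the full strength of the hypothesis is needed: the simultaneous bound of all three groups of moduli by $(1+t)|p_{n}|$, rather than the interior group alone, is what controls $g$ on this last arc and rules out a deficit of two zeros when $n$ is even. The main obstacle is thus this boundary bookkeeping at $z=b$, together with verifying that the phase of $a$ aligns the real reductions of $ap$ and $G$ — otherwise the transfer of signs is damped by a factor $\cos(\gamma_{ap}-\gamma_{G})$ that could kill the domination — a compatibility that must be read off from the hypothesis and the constraint $c/p_{n}\in\mathbb{R}$ rather than assumed. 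Once $n$ sign changes are secured, $g$ has exactly $n$ zeros and all zeros of $p$ lie on $\mathbb{S}$; and when the inequality is strict the node comparisons are strict, so the $n$ sign changes are genuine simple crossings, forcing the zeros of $p$ to be simple.
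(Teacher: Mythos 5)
First, note that the paper itself does not prove this theorem: Section 4 explicitly omits all proofs and refers to the original work of Lakatos \& Losonczi, so your attempt can only be judged on its own merits. Your setup is sound and your computations check out: $g(\theta)=e^{-i\gamma}e^{-in\theta/2}p(e^{i\theta})$ is indeed real-valued for an SI polynomial, the comparison polynomial $G(z)=b^{n}p_{n}+\sum_{k=1}^{n-1}b^{n-k}(c-p_{n})z^{k}+p_{n}z^{n}$ is SI, its reduction along $z=be^{i\phi}$ is $p_{n}e^{in\phi/2}\rho(\phi)$ with $\rho(\phi_{j})=(-1)^{j}(3-t)$ at $\phi_{j}=2\pi j/n$, and the triangle inequality plus the hypothesis give $|ap(z)-G(z)|\leqslant(1+t)|p_{n}|\leqslant(3-t)|p_{n}|$ on $\mathbb{S}$. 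Incidentally, the phase-alignment issue you flag is less serious than you fear: since $g_{ap}$ and $g_{G}$ are both real and $|ap(z_{j})-G(z_{j})|<|G(z_{j})|$ at every node (for $t<1$), the quotient $ap(z_{j})/G(z_{j})$ has positive real part, which forces a \emph{single} global sign $\epsilon=\pm1$ with $g_{ap}(\theta_{j})=\epsilon\,(-1)^{j}|g_{ap}(\theta_{j})|$; the alternation does transfer, and no extra hypothesis is needed for that.

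The genuine gap is the one you yourself locate and then do not close: the passage from $n-2$ sign changes at the interior nodes to $n$ zeros on $\mathbb{S}$, i.e.\ the two missing zeros on the arc containing $z=b$. This is not bookkeeping --- it is the entire content of the theorem beyond the easy part, and your comparison kernel cannot supply it. Indeed $\rho(0)=2-(1-t)(n-1)$ is negative for $n$ large and $t<1$, the same sign as $\rho(\phi_{1})=-(3-t)$ and (when $n$ is even) as $\rho(\phi_{n-1})$, so $\rho$ provides no alternation across that arc; worse, the auxiliary polynomial $G$ itself can have zeros off $\mathbb{S}$ (for $t=0$, $n=4$ one has $G(bw)=b^{4}p_{4}(1-w-w^{2}-w^{3}+w^{4})/1$ up to the factor $(w-1)$ in my normalization, and $1-w-w^{2}-w^{3}+w^{4}$ has a real root in $(1,2)$), so no Rouch\'e-type transfer of a full zero count from $G$ to $ap$ is available. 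In both parities the two ends of the exceptional arc carry the \emph{same} sign of $g$, so you must rule out ``zero extra crossings'' there, and nothing in your argument does so. Closing this requires a different device --- e.g.\ multiplying by a linear factor vanishing at $z=b$ so that the kernel acquires $n+1$ clean alternation points with no exceptional arc (as in the classical proof of Theorem \ref{TheoremLakatos1}), or a deformation/contradiction argument exploiting that the two putative off-circle zeros of an SI polynomial form an inversive pair. As it stands the proposal proves only that $p$ has at least $n-2$ zeros on $\mathbb{S}$, which is strictly weaker than the statement.
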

In \cite{Losonczi2006} Losonczi presented the following necessary
and sufficient conditions for all the zeros of a (complex) SR polynomial
of even degree to lie on $\mathbb{S}$: 
\begin{thm}
\textbf{\emph{(Losonczi)}} Let $p(z)$ be a monic complex SR polynomial
of even degree, say $n=2m$. Then, all the zeros of $p(z)$ will lie
on $\mathbb{S}$ if, and only if, there exist real numbers $\alpha_{1},\ldots,\alpha_{2m}$,
all with moduli less than or equal to $2$, that satisfy the inequalities:
$p_{k}=\left(-1\right)^{k}\sum_{l=0}^{\left[k/2\right]}\binom{m-k+2l}{l}\sigma_{k-2l}^{2m}\left(\alpha_{1},\ldots,\alpha_{2m}\right)$,
$0\leqslant k\leqslant m$, where $\sigma_{k}^{2m}\left(\alpha_{1},\ldots,\alpha_{2m}\right)$
denotes the $k$th elementary symmetric function in the $2m$ variables
$\alpha_{1},\ldots,\alpha_{2m}$. 
\end{thm}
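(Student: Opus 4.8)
The plan is to pass to the half-degree polynomial $q(x)$ that Section~\ref{SectionBasics} already attached to a PSR polynomial, where $x=z+1/z$ and $p(z)=z^{m}q(x)$, and to extract both the coefficient identity and the circle condition from the factorisation of $q$. The first step is a reduction to the real case. Since $p(z)$ is SR its zeros are closed under $\zeta\mapsto 1/\zeta$; if in addition every zero lies on $\mathbb{S}$ then $1/\zeta=\overline{\zeta}$ there, so the multiset of zeros is also closed under complex conjugation. A monic polynomial with conjugation-invariant zeros has real coefficients, so under the hypothesis that all zeros lie on $\mathbb{S}$ the polynomial $p(z)$ is forced to be a real self-reciprocal polynomial, palindromic in accordance with $p_{0}=p_{n}=1$. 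This legitimises writing $p(z)=z^{m}q(x)$ with $q$ real of degree $m=n/2$ and speaking of the real roots $\alpha_{1},\ldots,\alpha_{m}$ of $q$.

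Next I would make the correspondence between the zeros of $p$ and the roots of $q$ explicit. Factoring $q(x)=\prod_{j=1}^{m}(x-\alpha_{j})$ gives
\[
p(z)=z^{m}\prod_{j=1}^{m}\left(z+z^{-1}-\alpha_{j}\right)=\prod_{j=1}^{m}\left(z^{2}-\alpha_{j}z+1\right),
\]
and the two reciprocal zeros of each factor $z^{2}-\alpha_{j}z+1$ lie on $\mathbb{S}$ precisely when $\alpha_{j}$ is real with $|\alpha_{j}|\leqslant 2$, in which case they are $\mathrm{e}^{\pm i\theta_{j}}$ with $\alpha_{j}=2\cos\theta_{j}$. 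Consequently, all the zeros of $p$ lie on $\mathbb{S}$ if and only if every root $\alpha_{j}$ of $q$ is real and obeys $|\alpha_{j}|\leqslant 2$. This single equivalence supplies the geometric content of both directions of the theorem.

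It then remains to turn the factored form into the stated coefficient formula. Writing $q(x)=\sum_{j=0}^{m}c_{j}x^{j}$ with $c_{j}=(-1)^{m-j}\sigma_{m-j}(\alpha)$, I would expand
\[
p(z)=\sum_{j=0}^{m}c_{j}z^{m-j}(z^{2}+1)^{j}=\sum_{j,l}c_{j}\binom{j}{l}z^{m-j+2l},
\]
and collect the coefficient of $z^{k}$ by setting $j=m-k+2l$; this yields exactly $p_{k}=(-1)^{k}\sum_{l=0}^{[k/2]}\binom{m-k+2l}{l}\sigma_{k-2l}(\alpha)$ for $0\leqslant k\leqslant m$, the remaining coefficients being fixed by the palindromic symmetry $p_{k}=p_{n-k}$. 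For sufficiency, given real $\alpha_{j}$ with $|\alpha_{j}|\leqslant 2$ satisfying these equations, the very same expansion identifies $p(z)$ with $\prod_{j}(z^{2}-\alpha_{j}z+1)$, whose zeros are all on $\mathbb{S}$; for necessity, the reduction above lets us take the $\alpha_{j}$ to be the roots of the $q$ associated with $p$, which are real and lie in $[-2,2]$.

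The step I expect to demand the most care is the invertibility linking the two directions. The relations expressing $p_{0},\ldots,p_{m}$ in terms of $\sigma_{0}(\alpha),\ldots,\sigma_{m}(\alpha)$ are triangular: the $l=0$ term contributes $(-1)^{k}\sigma_{k}(\alpha)$ with coefficient $\binom{m-k}{0}=1$, while every other term involves a $\sigma$ of strictly smaller index. Hence the assignment $q\mapsto p$ is a bijection from monic polynomials of degree $m$ onto monic palindromic polynomials of degree $2m$, and the coefficient equations can be solved uniquely for $\sigma_{0}(\alpha),\ldots,\sigma_{m}(\alpha)$, that is, for the multiset $\{\alpha_{j}\}$. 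This is what guarantees that the $\alpha_{j}$ produced in the necessity direction are the genuine roots of the $q$ attached to $p$, and conversely that any solution of the equations reconstructs that same $q$; only through this bijection does the elementary equivalence ``$\alpha_{j}$ real in $[-2,2]$ $\iff$ zeros of $p$ on $\mathbb{S}$'' transfer faithfully from $q$ to $p$. Establishing this triangularity and unique solvability is the technical heart of the argument, the remaining manipulations (the binomial expansion and the symmetry bookkeeping) being routine.
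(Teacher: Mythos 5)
The paper itself offers no proof of this theorem --- Section \ref{SectionTheorems} explicitly omits all proofs and refers the reader to the original sources, here Losonczi's 2006 article --- so your argument has to stand on its own, and in outline it is the standard (and, as far as I can tell, the original) one: write $p(z)=z^{m}q(x)$ with $x=z+1/z$, factor $p(z)=\prod_{j=1}^{m}\left(z^{2}-\alpha_{j}z+1\right)$ over the roots $\alpha_{j}$ of $q$, observe that each quadratic factor has its reciprocal pair of zeros on $\mathbb{S}$ exactly when $\alpha_{j}$ is real with $\left|\alpha_{j}\right|\leqslant2$, and collect powers of $z$ to obtain the coefficient identity. Your binomial bookkeeping and the triangularity argument that makes the two directions consistent are both correct.

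There are, however, two genuine mismatches with the statement as printed, and you pass over both silently. First, your construction produces $m$ numbers $\alpha_{1},\ldots,\alpha_{m}$ and elementary symmetric functions in $m$ variables, whereas the statement demands $2m$ numbers and $\sigma_{k-2l}^{2m}$. These are different assertions, and the printed one is false: for $m=1$ take $\alpha_{1}=\alpha_{2}=2$, so that $p_{1}=-\sigma_{1}^{2}(\alpha_{1},\alpha_{2})=-4$ and $p(z)=z^{2}-4z+1$ has zeros $2\pm\sqrt{3}$ off the circle even though both $\left|\alpha_{i}\right|\leqslant2$. Your version is the correct one, but a proof that quietly replaces $2m$ by $m$ has not proved the stated theorem; the discrepancy must be identified as an error in the statement. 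Second, your opening reduction asserts that a monic SR polynomial with all zeros on $\mathbb{S}$ is automatically palindromic with $p_{0}=p_{n}=1$. Under the paper's definition of SR this fails: $z^{2}-1$ is monic and SR (with $\omega=-1$), both its zeros lie on $\mathbb{S}$, yet $p_{0}=-1$ and it admits no factorization $\prod_{j}\left(z^{2}-\alpha_{j}z+1\right)$, so the necessity direction breaks for it. Conjugation-invariance of the zero set does force real coefficients, but palindromicity needs the extra hypothesis $\omega=1$; the theorem is really about PSR polynomials (Losonczi's ``reciprocal''). With those two corrections --- $m$ parameters and $p=p^{\ast}$ --- the rest of your argument, namely the equivalence ``$\alpha_{j}$ real in $[-2,2]$ iff its pair of zeros lies on $\mathbb{S}$'', the expansion giving $p_{k}=(-1)^{k}\sum_{l}\binom{m-k+2l}{l}\sigma_{k-2l}$, and the unique solvability of the triangular system, is sound.
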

Losonczi in \cite{Losonczi2006} also showed that if all the zeros
of a complex monic reciprocal polynomial are on $\mathbb{S}$ then
its coefficients are all real and satisfy the inequality $\left|p_{n}\right|\leqslant\binom{n}{k}$
for $0\leqslant k\leqslant n$.

The theorems above give conditions for \emph{all} the zeros of SI
or SR polynomials to lie on $\mathbb{S}$. In many cases, however,
we need to verify if a polynomial has a \emph{given number} of zeros
(or \emph{none}) on the unit circle. Considering this problem, Vieira
in \cite{Vieira2017} found sufficient conditions for an SI polynomial
of degree $n$ to have a determined number of zeros on the unit circle.
In terms of the length $L\left[p(z)\right]=\left|p_{0}\right|+\cdots+\left|p_{n}\right|$
of a polynomial $p(z)$ of degree $n$, this theorem can be stated
as follows: 
\begin{thm}
\label{TheoremVieira}\textbf{\emph{(Vieira)}} Let $p(z)$ be an SI
polynomial of degree $n$. If the inequality $\left|p_{n-m}\right|\geqslant\tfrac{1}{4}\left(\tfrac{n}{n-m}\right)L\left[p(z)\right]$,
$m<n/2$, holds true, then $p(z)$ will have exactly $n-2m$ zeros
on $\mathbb{S}$; besides, all these zeros are simple when the inequality
is strict. Moreover, $p(z)$ will have no zero on $\mathbb{S}$ if,
for $n$ even and $m=n/2$, the inequality $\left|p_{m}\right|>\frac{1}{2}L\left[p(z)\right]$
is satisfied.
\end{thm}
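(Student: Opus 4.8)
The plan is to avoid working with $p(z)$ directly and instead transfer the counting problem to the derivative $p'(z)$, where the single coefficient $p_{n-m}$ can be made genuinely dominant. The two ingredients I would combine are the Rouch\'e corollary stated after Rouch\'e's theorem and Cohn's theorem for SI polynomials (``an SI polynomial has as many zeros outside $\mathbb{S}$ as its derivative''). Throughout I use that $p$ being SI forces $|p_k|=|p_{n-k}|$, which also guarantees, by symmetry, that the number of zeros of $p$ inside $\mathbb{S}$ equals the number outside; hence showing $p$ has exactly $m$ zeros off $\mathbb{S}$ on each side is the same as showing it has exactly $n-2m$ zeros on $\mathbb{S}$.

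The one non-mechanical observation, and the step that produces the exact constant in the hypothesis, is the length identity for the derivative. Since $|p_k|=|p_{n-k}|$, replacing $k\mapsto n-k$ and averaging gives
\begin{equation}
L\!\left[p'(z)\right]=\sum_{k=1}^{n}k\,|p_k|=\sum_{k=0}^{n}k\,|p_k|=\tfrac{n}{2}\,L\!\left[p(z)\right].
\end{equation}
With this in hand, I would read off the dominant coefficient of $p'(z)=\sum_{k=1}^{n}k\,p_k z^{k-1}$, namely the coefficient $(n-m)p_{n-m}$ of $z^{n-m-1}$. The hypothesis $|p_{n-m}|\geqslant\tfrac{1}{4}\bigl(\tfrac{n}{n-m}\bigr)L[p(z)]$ is precisely $2(n-m)|p_{n-m}|\geqslant\tfrac{n}{2}L[p(z)]=L[p'(z)]$, i.e.\ $(n-m)|p_{n-m}|\geqslant L[p'(z)]-(n-m)|p_{n-m}|$, which says that this coefficient dominates the sum of the moduli of all remaining coefficients of $p'$.

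When the inequality is strict, the Rouch\'e corollary applied to $p'$ yields that $p'$ has exactly $n-m-1$ zeros inside $\mathbb{S}$ and, by strictness, none on $\mathbb{S}$; as $\deg p'=n-1$, it therefore has exactly $m$ zeros outside $\mathbb{S}$. Cohn's theorem then transfers this to $p$, giving exactly $m$ zeros of $p$ outside $\mathbb{S}$, and the inversive symmetry pairs these with $m$ zeros inside, so exactly $n-2m$ zeros remain on $\mathbb{S}$. Simplicity under strict inequality is immediate: a multiple zero of $p$ on $\mathbb{S}$ would be a common zero of $p$ and $p'$, but $p'$ has no zero on $\mathbb{S}$. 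The final assertion is the degenerate endpoint $m=n/2$, where $\tfrac{1}{4}\bigl(\tfrac{n}{n-m}\bigr)=\tfrac12$; the same computation with the dominant coefficient $(n/2)p_{n/2}$ at degree $n/2-1$ forces $p'$ to have $n/2-1$ zeros inside, hence $n/2$ outside, so $p$ has no zero on $\mathbb{S}$.

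I expect the main obstacle to be purely conceptual rather than computational: a naive Rouch\'e estimate on $p$ itself must fail, because on $\mathbb{S}$ the terms $p_m z^m$ and $p_{n-m}z^{n-m}$ have equal modulus and combine into a cosine that vanishes, so no single coefficient of $p$ can dominate. Passing to $p'$ is exactly what breaks this tie, since the weights $n-m$ and $m$ differ, and the identity $L[p'(z)]=\tfrac{n}{2}L[p(z)]$ is what converts the derivative's dominance condition into the stated factor $\tfrac{1}{4}\tfrac{n}{n-m}$. The only remaining care lies in the boundary (equality) cases, where $p'$ may acquire zeros on $\mathbb{S}$ and the zeros of $p$ need no longer be simple.
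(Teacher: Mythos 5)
The survey omits the proof of this theorem (it defers to the original work \cite{Vieira2017}), so your argument can only be checked on its own merits; it is in fact essentially the original argument. The chain is sound under \emph{strict} inequality: the identity $L\left[p^{\prime}(z)\right]=\sum_{k=1}^{n}k\left|p_{k}\right|=\tfrac{n}{2}L\left[p(z)\right]$ follows correctly from $\left|p_{k}\right|=\left|p_{n-k}\right|$; the hypothesis is then exactly the statement that the coefficient $(n-m)p_{n-m}$ of $z^{n-m-1}$ in $p^{\prime}(z)$ dominates the sum of the moduli of all its other coefficients; Rouch\'e gives $n-m-1$ zeros of $p^{\prime}(z)$ inside $\mathbb{S}$ and, by strictness, none on it, hence $m$ outside; Cohn's theorem for SI polynomials transfers the count $m$ of exterior zeros to $p(z)$; the inversive pairing then leaves exactly $n-2m$ zeros on $\mathbb{S}$, and simplicity follows since $p^{\prime}(z)$ does not vanish on $\mathbb{S}$. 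The endpoint $m=n/2$ is also handled correctly and reproduces the constant $\tfrac{1}{2}$.

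The one genuine gap is the equality case, which you flag but do not close -- and it cannot be closed as the theorem is worded here. At equality the Rouch\'e barrier degenerates: the homotopy $p^{\prime}_{t}(z)=(n-m)p_{n-m}z^{n-m-1}+t\,r(z)$, $t\uparrow1$, only shows that $p^{\prime}(z)$ has \emph{at most} $m$ zeros outside $\mathbb{S}$, hence that $p(z)$ has \emph{at least} $n-2m$ zeros on $\mathbb{S}$, since zeros may migrate onto the circle in the limit. This is not an artifact of the method: $p(z)=(z+1)^{3}$ with $m=1$ is SI, satisfies the hypothesis with equality ($\left|p_{2}\right|=3=\tfrac{1}{4}\cdot\tfrac{3}{2}\cdot8$), and has all three zeros, counted with multiplicity, on $\mathbb{S}$ rather than $n-2m=1$. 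So your proof establishes everything that is actually true; the residual discrepancy lies in the survey's restatement, which should either require strict inequality for the exact count (as in the original reference) or weaken the conclusion at equality to ``at least $n-2m$ zeros on $\mathbb{S}$.''
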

The case $m=0$ corresponds to Lakatos \& Losonczi Theorem \ref{TheoremLakatos1}
for all the zeros of $p(z)$ to lie on $\mathbb{S}$. The necessary
counterpart of this theorem was considered by Stankov in \cite{Stankov2018},
with an application to the theory of Salem numbers --- see Section
\ref{SectionSalem}.

Other results on the distribution of zeros of SI polynomials include
the following: Sinclair \& Vaaler \cite{SinclairVaaler2008} showed
that a monic SI polynomial $p(z)$ of degree $n$ satisfying the inequalities
$L^{r}\left[p(z)\right]\leqslant2+2^{r}\left(n-1\right)^{1-r}$ or
$L^{r}\left[p(z)\right]\leqslant2+2^{r}\left(l-2\right)^{1-r}$, where
$r\geqslant1$, $L^{r}\left[p(z)\right]=\left|p_{0}\right|^{r}+\cdots+\left|p_{n}\right|^{r}$
and $l$ is the number of non-null terms of $p(z)$, has all their
zeros on $\mathbb{S}$; the authors also studied the geometry of SI
polynomials whose zeros are all on $\mathbb{S}$. Choo \& Kim applied
Theorem \ref{TheoremChen} to SI polynomials in \cite{ChooKim2013}.
Hypergeometric polynomials with all their zeros on $\mathbb{S}$ were
considered in \cite{Area2013,Dimitrov2013}. Kim \cite{Kim2013} also
obtained SI polynomials which are related to Jacobi polynomials. Ito
\& Wimmer \cite{ItoWimmer2016} studied SI polynomial operators in
Hilbert space whose spectrum is on $\mathbb{S}$. 

\section{Where these polynomials are found?\label{SectionApplications} }

In this section, we shall briefly discuss some important or recent
applications of the theory of polynomials with symmetric zeros. We
remark, however, that our selection is by no means exhaustive: for
example, SR and SI polynomials also find applications in many fields
of mathematics (e.g., information and coding theory \cite{JoynerKim2011},
algebraic curves over a finite field and cryptography \cite{Joyner2013},
elliptic functions \cite{JoynerShaska2018}, number theory \cite{Mckee2008}
etc.) and physics (e.g., Lee-Yang theorem in statistical physics \cite{YangLee1952},
Poincaré Polynomials defined on Calabi-Yau manifolds of superstring
theory \cite{He2011} etc.). 

\subsection{Polynomials with small Mahler measure \label{SectionSalem}}

Given a monic polynomial $p(z)$ of degree $n$, with integer coefficients,
the Mahler measure of $p(z)$, denoted by $M[p(z)]$, is defined as
the product of the modulus of all those zeros of $p(z)$ that lie
in the exterior of $\mathbb{S}$ \cite{Everest2013}. That is, 
\begin{equation}
M[p(z)]=\prod_{i=1}^{n}\max\left\{ 1,\left|\zeta_{i}\right|\right\} ,\label{Mahler}
\end{equation}
where $\zeta_{1},\ldots,\zeta_{n}$ are the zeros\footnote{The Mahler measure of a monic integer polynomial $p(z)$ can also
be defined without making reference to its zeros through the formula
$M[p(z)]=\exp\left\{ \int_{0}^{1}\log\left[p\left(\mathrm{e}^{2\pi it}\right)\right]\mathrm{d}t\right\} $
--- see \cite{Everest2013}.} of $p(z)$. Thus, if a monic integer polynomial $p(z)$ has all its
zeros in or on the unit circle, we have $M[p(z)]=1$; in particular,
all cyclotomic polynomials (which are PSR polynomials whose zeros
are the primitive roots of unity, see \cite{Marden1966}) have Mahler
measure equal to $1$. In a sense, the Mahler measure of a polynomial
$p(z)$ measures how close it is to the cyclotomic polynomials. Therefore,
it is natural to raise the following: 
\begin{problem}
\textbf{(Mahler)}\emph{ Find the monic, integer, non-cyclotomic polynomial
with the smallest Mahler measure.}
\end{problem}
This is an 80 years old open problem of mathematics. Of course, we
can expect that the polynomials with the smallest Mahler measure be
among those with only a few number of zeros outside $\mathbb{S}$,
in particular among those with only one zero outside of $\mathbb{S}$.
A monic integer polynomial that has exactly one zero outside $\mathbb{S}$
is called a \emph{Pisot polynomial} and its unique zero of modulus
greater than $1$ is called its \emph{Pisot number} \cite{Bertin2012}.
A breakthrough towards the solution of Mahler's problem was given
by Smyth in \cite{Smyth1971}:
\begin{thm}
\textbf{\emph{(Smyth)}} \label{TheoremSmyth}The Pisot polynomial
$S(z)=z^{3}-z-1$ is the polynomial with smallest Mahler measure among
the set of all monic, integer and non-SR polynomials. Its Mahler measure
is given by the value of its Pisot number, which is, 
\begin{equation}
\sigma=\sqrt[3]{\tfrac{1}{2}+\tfrac{1}{2}\sqrt{\tfrac{23}{27}}}+\sqrt[3]{\tfrac{1}{2}-\tfrac{1}{2}\sqrt{\tfrac{23}{27}}}\approx1.32471795724.
\end{equation}
\end{thm}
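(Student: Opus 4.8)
The plan is to recast the claim in the Mahler-measure language already introduced: writing $\sigma$ for the unique real zero of $S(z)=z^{3}-z-1$, I must show that every monic integer polynomial $p(z)$ that is not SR satisfies $M[p(z)]\geq\sigma$, with equality only for $S$ up to the evident symmetries $z\mapsto-z$ and $p\mapsto\pm p$. First I would reduce to the essential case. Factoring out any power of $z$ leaves $M[\cdot]$ unchanged, so I may assume $p(0)\neq0$. Since $M[\cdot]$ is multiplicative over factors and every monic integer factor has Mahler measure at least $1$, and since a non-SR polynomial cannot have all of its irreducible factors paired reciprocally (otherwise its zero multiset would be invariant under $\zeta\mapsto1/\zeta$), there is an irreducible non-reciprocal factor $q$ with $M[p]\geq M[q]$; it therefore suffices to prove the bound for an irreducible, hence minimal, non-SR polynomial. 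The basic analytic tool is the integral representation of the Mahler measure noted above, $\log M[p]=\int_{0}^{1}\log\bigl|p(e^{2\pi it})\bigr|\,dt$ (Jensen's formula), which turns the product over the exterior zeros into an average of $\log|p|$ over $\mathbb{S}$.

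The heart of the argument is an auxiliary-function construction that exploits non-reciprocity. Alongside $p$ I would form its reciprocal $p^{\ast}(z)=z^{n}p(1/z)$ and consider the ratio $F(z)=p(z)/p^{\ast}(z)$ expanded as a power series about $z=0$. Because $p$ is monic we have $p^{\ast}(0)=1$, so $1/p^{\ast}(z)$, and hence $F(z)$, is a power series with integer coefficients; and precisely because $p$ is not SR we have $p\neq\pm p^{\ast}$, so $F$ is non-constant. The problem then becomes extremal: among all non-constant integer power series $F$ arising from monic integer $p$, minimise $M[p]$. Following Smyth \cite{Smyth1971}, I would express $\log M[p]$ through Jensen's formula as the boundary average of $\log|p|$ over $\mathbb{S}$ and bound it from below using the integrality of the coefficients of $F$ together with a positivity (convexity) inequality for the associated series --- the mechanism that forces $\log M[p]\geq\log\sigma$.

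The final step is to extract the extremiser and identify the constant. Tracking the equality case of the positivity inequality pins down the low-order coefficients of $p$, and the integrality constraints then force $q=S(z)=z^{3}-z-1$ up to $z\mapsto-z$ and an overall sign. That $M[S]=\sigma$ follows from $S$ being a Pisot polynomial: the product of its three zeros equals $1$, so the two non-real zeros have modulus $1/\sqrt{\sigma}<1$ and lie inside $\mathbb{S}$, leaving $\sigma$ as the sole exterior zero, whence $M[S]=\sigma$; the Cardano radicals for $\sigma$ then come from solving the depressed cubic $z^{3}=z+1$. I expect the main obstacle to be exactly the sharp lower bound of the preceding paragraph: engineering the auxiliary function so that integrality plus positivity yield the precise constant $\log\sigma$ rather than a weaker bound, while simultaneously controlling the equality case to guarantee uniqueness of the extremiser. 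This delicate, non-formulaic estimate is the genuine content of Smyth's theorem; by contrast the reductions and the closing identification of $S$ are routine once that inequality is established.
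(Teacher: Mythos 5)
First, a remark on the comparison you were asked to survive: the paper gives no proof of this theorem. It is a survey item credited to Smyth with a pointer to \cite{Smyth1971}, so there is no in-paper argument to measure yours against; your proposal has to stand on its own.

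As written, it does not. The outer layers are sound and standard: discarding factors of $z$ leaves $M[\cdot]$ unchanged; multiplicativity of $M$ together with $M\geq 1$ for monic integer polynomials reduces the problem to an irreducible non-reciprocal factor; $F(z)=p(z)/p^{\ast}(z)$ is a power series with integer coefficients because $p^{\ast}(0)=p_{n}=1$, and it is non-constant precisely because $p\neq\pm p^{\ast}$; and $M[S]=\sigma$ follows since the product of the three zeros of $z^{3}-z-1$ is $1$, so the two non-real zeros have modulus $\sigma^{-1/2}<1$. But the entire mathematical content of Smyth's theorem is the sharp bound $M[p]\geq\sigma$ for irreducible non-reciprocal $p$, and at exactly that point your text substitutes a placeholder: ``bound it from below using the integrality of the coefficients of $F$ together with a positivity (convexity) inequality for the associated series.'' No such inequality is stated, let alone proved, and nothing in your setup explains why the extremal constant should be the root of $z^{3}-z-1$ rather than something weaker. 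The actual mechanism is much more specific: one strips Blaschke-type factors off $p/p^{\ast}$ to manufacture functions analytic and bounded by $1$ on the unit disk whose values at the origin encode $1/M[p]$, notes that their product has some Taylor coefficient in positive degree that is a nonzero integer (hence of modulus at least $1$) because $F$ is non-constant, and then proves a delicate two-function Schwarz--Pick-type coefficient inequality whose equality analysis simultaneously yields the constant $\sigma$ and the uniqueness of the extremiser. You candidly flag this as ``the genuine content of Smyth's theorem,'' which is correct --- but it means what you have written is a roadmap to the proof, not a proof. To repair it you would need to state the auxiliary functions explicitly, prove the coefficient inequality with its exact constant, and carry out the equality case that forces $q=z^{3}-z-1$ up to the symmetries $z\mapsto -z$ and $p\mapsto\pm p^{\ast}$ (note that $z^{3}+z^{2}-1$ is also extremal, so the list of symmetries in your opening sentence is incomplete).
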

The Mahler problem is, however, still open for SR polynomials. A monic
integer SR polynomial with exactly two (real and positive) zeros (say,
$\zeta$ and $1/\zeta$) not lying on $\mathbb{S}$ is called a \emph{Salem
polynomial} \cite{Bertin2012,Smyth2015}. It can be shown that a Pisot
polynomial with at least one zero on $\mathbb{S}$ is also a Salem
polynomial. The unique positive zero greater than one of a Salem polynomial
is called its \emph{Salem number}, which also equals the value of
its Mahler measure. A Salem number $s$ is said to be small if $s<\sigma$;
up to date, only $47$ small Salem numbers are known \cite{Boyd1977,Mossinghoff1998}
and the smallest known one was found about 80 years ago by Lehmer
\cite{Lehmer1933}. This gave place to the following:
\begin{conjecture}
\textbf{\emph{(Lehmer)}}\emph{ }The monic integer polynomial with
the smallest Mahler measure is the Lehmer polynomial $\mathcal{L}(z)=z^{10}+z^{9}-z^{7}-z^{6}-z^{5}-z^{4}-z^{3}+z+1$,
a Salem polynomial whose Mahler measure is $\varLambda\approx1.17628081826$,
known as Lehmer's constant. 
\end{conjecture}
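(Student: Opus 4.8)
The plan must begin with an admission: the final statement is \emph{Lehmer's conjecture}, an open problem for more than eighty years, so what follows is not a proof but the only reduction that is genuinely in reach, together with the precise place where every known method breaks down. The first step is already available in this survey. By Smyth's Theorem~\ref{TheoremSmyth}, every monic integer \emph{non}-SR polynomial has Mahler measure at least $\sigma\approx1.3247>\varLambda$, while every cyclotomic polynomial has Mahler measure exactly $1$. Hence it suffices to prove $M[p(z)]\geq\varLambda$ for every non-cyclotomic \emph{SR} polynomial, and within this family the Salem polynomials are the critical case, since these are precisely the reciprocal polynomials with the fewest possible zeros off $\mathbb{S}$.

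Second, I would pass to the Chebyshev variable used earlier in the paper. A monic Salem polynomial $p(z)$ of even degree $n=2m$ is PSR, so $p(z)=z^{m}q(x)$ with $x=z+1/z$, where $q(x)=p_{0}C_{m}(x)+\cdots+p_{m-1}C_{1}(x)+p_{m}$ and $C_{j}(x)=z^{j}+z^{-j}$ is a monic integer polynomial of degree $j$; thus $q$ is itself monic with integer coefficients. The Salem number $s$ and its reciprocal both map to the single real zero $x_{0}=s+1/s>2$ of $q$, while every zero of $q$ coming from $\mathbb{S}$ lies in $[-2,2]$. Since $M[p(z)]=s$, the conjecture for this family is equivalent to the assertion that a monic integer polynomial with one zero $x_{0}>2$ and all others in $[-2,2]$ must satisfy $x_{0}\geq\varLambda+\varLambda^{-1}$ unless it is cyclotomic. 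This is a separation statement about algebraic integers near the interval $[-2,2]$, and it places the problem squarely in the circle of the Schur--Siegel--Smyth trace problem. The analytic route, by contrast, would estimate $\log M[p(z)]=\sum_{|\zeta_{i}|>1}\log|\zeta_{i}|$ from below; the strongest unconditional result in that direction is Dobrowolski's inequality $M[p(z)]\geq1+c\left(\tfrac{\log\log n}{\log n}\right)^{3}$ for non-cyclotomic $p$ of degree $n$.

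The hard part is in fact the whole problem: neither route yields a lower bound that is bounded away from $1$ uniformly in the degree. Dobrowolski's bound tends to $1$ as $n\to\infty$, so it cannot isolate $\varLambda$ as an absolute minimum, and the trace-type reformulation founders because a Salem polynomial has all but two of its zeros on $\mathbb{S}$, so the off-circle pair $s,1/s$ may, a priori, approach $\mathbb{S}$ arbitrarily closely as the degree grows; the product defining $M[p(z)]$ is then a near-cancellation that the triangle inequality and potential-theoretic estimates cannot control. Verifying that $\varLambda$ is minimal among the $47$ known small Salem numbers \cite{Boyd1977,Mossinghoff1998} is only a finite computation and is silent about undiscovered high-degree examples. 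A genuine proof would therefore need a fundamentally new ingredient --- an effective gap theorem pushing the zeros of a non-cyclotomic reciprocal polynomial off a fixed neighbourhood of $\mathbb{S}$, or an arithmetic-dynamical height gap --- none of which is currently known. I expect the statement to remain a conjecture, and the outline above to be as far as the present machinery can honestly go.
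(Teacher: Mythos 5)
This statement is labelled a conjecture in the paper, which explicitly notes that ``the proof of this conjecture is also an open problem,'' so there is no proof in the paper to compare against; your refusal to claim one is the only correct response. Your supporting discussion is accurate and consistent with the survey's own framing: Smyth's Theorem~\ref{TheoremSmyth} does dispose of the non-self-reciprocal case, the passage to the variable $x=z+1/z$ matches the Chebyshev decomposition given in Section~\ref{SectionBasics}, and your account of why Dobrowolski-type bounds cannot close the gap is correct. The one caveat worth flagging is that your reduction of the reciprocal case to Salem polynomials is heuristic rather than established --- a non-cyclotomic reciprocal polynomial may have many zeros off $\mathbb{S}$, and it is not known that the infimum of the Mahler measure over that family is attained by (or even approached through) Salem polynomials --- but since you present this as identifying the ``critical case'' rather than as a rigorous step, the discussion stands as an honest assessment of an open problem.
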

The proof of this conjecture is also an open problem. To be fair,
we do not even know if there exists a smallest Salem number at all.
This is the content of another problem raised by Lehmer:
\begin{problem}
\textbf{(Lehmer)} \emph{Answer whether there exists or not a positive
number $\epsilon$ such that the Mahler measure of any monic, integer
and non-cyclotomic polynomial $p(z)$ satisfies the inequality $M[p(z)]>1+\epsilon$.}
\end{problem}
Lehmer's polynomial also appears in connection with several fields
of mathematics. Many examples are discussed in Hironaka's paper \cite{Hironaka2009};
here we shall only present an amazing identity found by Bailey \&
Broadhurst in \cite{Bailey1999} in their works on polylogarithm ladders:
if $\lambda$ is any zero of the aforementioned Lehmer's polynomial
$\mathcal{L}(z)$, then, 
\begin{equation}
\frac{\left(\lambda^{315}-1\right)\left(\lambda^{210}-1\right)\left(\lambda^{126}-1\right)^{2}\left(\lambda^{90}-1\right)\left(\lambda^{3}-1\right)^{3}\left(\lambda^{2}-1\right)^{5}\left(\lambda-1\right)^{3}}{\left(\lambda^{630}-1\right)\left(\lambda^{35}-1\right)\left(\lambda^{15}-1\right)^{2}\left(\lambda^{14}-1\right)^{2}\left(\lambda^{5}-1\right)^{6}\lambda^{68}}=1.
\end{equation}

\subsection{Knot theory}

A \emph{knot} is a closed, non-intersecting, one-dimensional curve
embedded on $\mathbb{R}^{3}$ \cite{Adams2004}. Knot theory studies
topological properties of knots as, for example, criteria under which
a knot can be unknot, conditions for the equivalency between knots,
the classification of prime knots etc. --- see \cite{Adams2004}
for the corresponding definitions. In Figure \ref{FigureKnots} we
plotted all prime knots up to six crossings.

\begin{figure}[H]
\centering\includegraphics[scale=0.35]{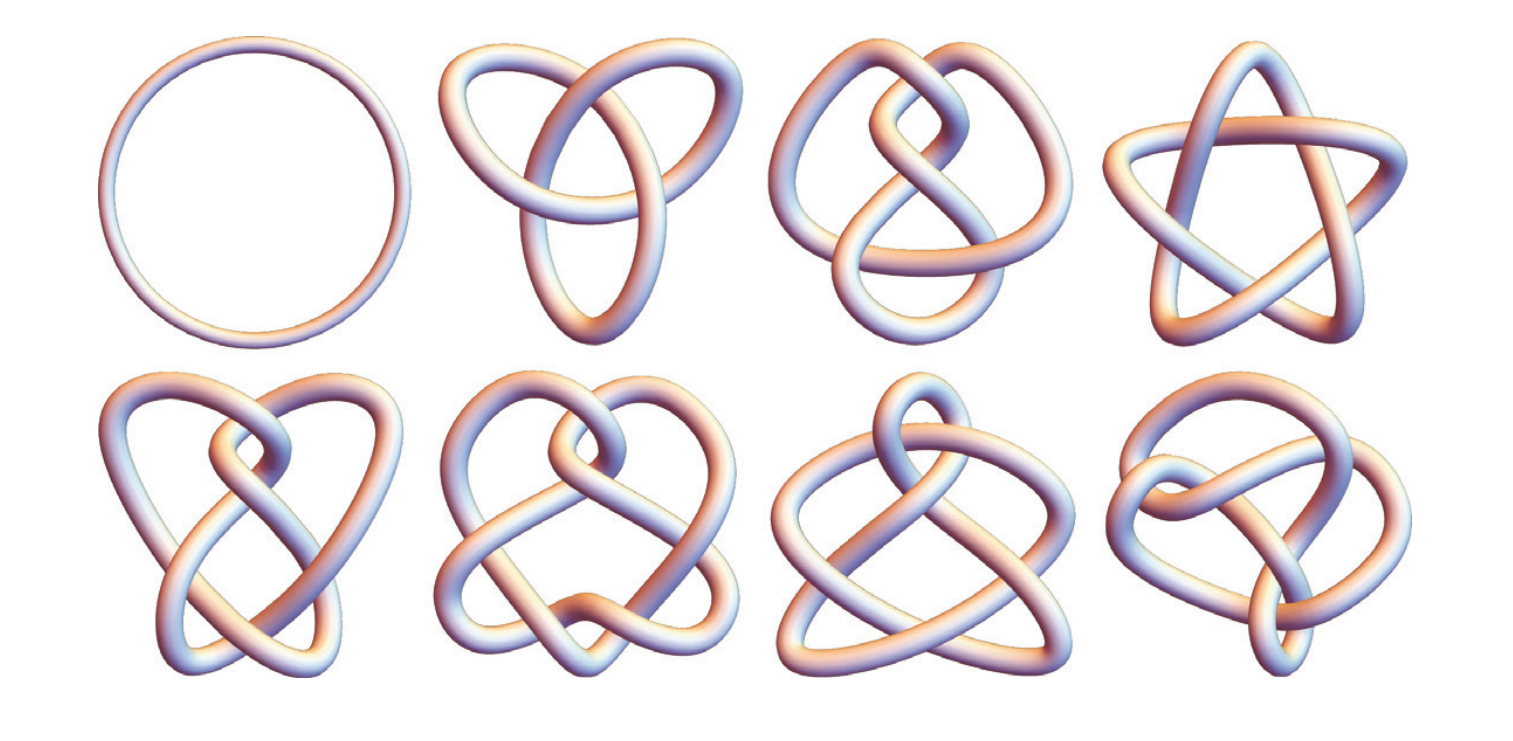}

\caption{A table of prime knots up to six crossings. In the Alexander-Briggs
notation these knots are, in order, $0_{1}$, $3_{1}$, $4_{1}$,
$5_{1}$, $5_{2}$, $6_{1}$, $6_{2}$ and $6_{3}$.}
\label{FigureKnots}
\end{figure}

One of the most important questions in knot theory is to determine
whether or not two knots are equivalent. This, however, is not an
easy task. A way of attacking this question is to look for abstract
objects --- mainly the so-called \emph{knot invariants} --- rather
than to the knots themselves. A knot invariant is a (topologic, combinatorial,
algebraic etc.) quantity that can be computed for any knot and that
is always the same for equivalents knots\footnote{We remark, however, that different knots can have the same knot invariant.
Up to date, we do not know whether there exists a knot invariant that
distinguishes all non-equivalent knots from each other (although there
do exist some invariants that distinguish every knot from the trivial
knot). Thus, until now the concept of knot invariants only partially
solves the problem of knot classification.}. An important class of knot invariants is constituted by the so-called
\emph{Knot Polynomials}. Knot polynomials were introduced in 1928
by Alexander \cite{Alexander1928}. They consist in polynomials with
integer coefficients that can be written down for every knot. For
about 60 years since its creation, Alexander polynomials were the
only known kind of knot polynomial. It was only in 1985 that Jones
\cite{Jones1985} came up with a new kind of knot polynomials ---
today known as \emph{Jones polynomials} --- and since then other
kinds were discovered as well, see \cite{Adams2004}. 

\begin{table}[H]
\centering%
\begin{tabular}{clcl}
\hline 
Knot & Alexander polynomial $\varDelta(t)$ & Knot & Alexander polynomial $\varDelta(t)$\tabularnewline
\hline 
$0_{1}$ & $1$ & $5_{2}$ & $2-3t+2t^{2}$\tabularnewline
$3_{1}$ & $1-t+t^{2}$ & $6_{1}$ & $2-5t+2t^{2}$\tabularnewline
$4_{1}$ & $1-3t+t^{2}$ & $6_{2}$ & $1-3t+3t^{2}-3t^{3}+t^{4}$\tabularnewline
$5_{1}$ & $1-t+t^{2}-t^{3}+t^{4}$ & $6_{3}$ & $1-3t+5t^{2}-3t^{3}+t^{4}$\tabularnewline
\hline 
\end{tabular}

\caption{Alexander polynomials for prime knots up to six crossings.}
\label{TableKnotP}
\end{table}

What is interesting for us here is that the Alexander polynomials
are PSR polynomials of even degree (say, $n=2m$) and with integer
coefficients\footnote{Alexander polynomials can also be defined as Laurent polynomials,
see \cite{Adams2004}.}. Thus, they have the following general form:
\begin{equation}
\varDelta(t)=\delta_{0}+\delta_{1}+\cdots+\delta_{m-1}t^{m-1}+\delta_{m}t^{m}+\delta_{m-1}t^{m+1}+\cdots+\delta_{1}t^{2m-1}+\delta_{0}t^{2m},\label{Bt}
\end{equation}
where $\delta_{i}\in\mathbb{N}$, $0\leqslant i\leqslant m$. In table
\ref{TableKnotP} we present the Alexander polynomials for the prime
knots up to six crossings. 

Knots theory finds applications in many fields of mathematics in physics
--- see \cite{Adams2004}. In mathematics, we can cite a very interesting
connection between Alexander polynomials and the theory of Salem numbers:
more precisely, the Alexander polynomial associated with the so-called
\emph{Pretzel Knot} $\mathcal{P}(-2,3,7)$ is nothing but the Lehmer
polynomial $\mathcal{L}(z)$ introduced in Section \ref{SectionSalem};
it is indeed the Alexander polynomial with the smallest Mahler measure
\cite{Hironaka2001}. In physics, knot theory is connected with quantum
groups and it also can be used to one construct solutions of the Yang-Baxter
equation \cite{Vieira2018B} through a method called baxterization
of braid groups. 

\subsection{Bethe Equations\label{SubSection-Bethe}}

The \emph{Bethe Equations} were introduced in 1931 by Hans Bethe \cite{Bethe1931}
together with his powerful method --- the so-called \emph{Bethe Ansatz
method} --- for solving spectral problems associated with exactly
integrable models of statistical mechanics. They consist in a system
of coupled and non-linear equations that ensure the consistency of
the Bethe Ansatz. In fact, for the XXZ Heisenberg spin chain, the
Bethe Equations consist in a coupled system of trigonometric equations;
however, after a change of variables is performed, we can write them
in the following rational form:
\begin{equation}
x_{i}^{L}=\left(-1\right)^{N-1}\prod_{k=1,k\neq i}^{N}\frac{x_{i}x_{k}-2\varDelta x_{i}+1}{x_{i}x_{k}-2\varDelta x_{k}+1},\qquad1\leqslant i\leqslant N,\label{Bethe}
\end{equation}
where $L\in\mathbb{N}$ is the length of the chain, $N\in\mathbb{N}$
is the excitation number and $\varDelta\in\mathbb{R}$ is the so-called
anisotropy parameter. A solution of (\ref{Bethe}) consists in a (non-ordered)
set $X=\left\{ x_{1},\ldots,x_{N}\right\} $ of the unknowns $x_{1},\ldots,x_{N}$
so that (\ref{Bethe}) is satisfied. Notice that the Bethe equations
satisfy the important relation $x_{1}^{L}x_{2}^{L}\cdots x_{N}^{L}=1$,
which suggests an inversive symmetry of their zeros. 

In \cite{VieiraLimaSantos2015}, Vieira and Lima-Santos showed that
the solutions of (\ref{Bethe}), for $N=2$ and arbitrary $L$, are
given in terms of the zeros of certain SI polynomials. In fact, (\ref{Bethe})
becomes a system of two coupled algebraic equations for $N=2$, namely,
\begin{equation}
x_{1}^{L}=-\frac{x_{1}x_{2}-2\varDelta x_{1}+1}{x_{1}x_{2}-2\varDelta x_{2}+1},\qquad\text{and}\qquad x_{2}^{L}=-\frac{x_{1}x_{2}-2\varDelta x_{2}+1}{x_{1}x_{2}-2\varDelta x_{1}+1}.\label{Bethe2}
\end{equation}
Now, from the relation $x_{1}^{L}x_{2}^{L}=1$ we can eliminate one
of the unknowns in (\ref{Bethe2}) --- for instance, by setting $x_{2}=\omega_{a}/x_{1}$,
where $\omega_{a}=\exp\left(2\pi ia/L\right)$, $1\leqslant a\leqslant L$,
are the roots of unity of degree $L$. Replacing these values for
$x_{2}$ into (\ref{Bethe2}), we obtain the following polynomial
equations fixing $x_{1}$:
\begin{equation}
p_{a}(z)=\left(1+\omega_{a}\right)z^{L}-2\varDelta\omega_{a}z^{L-1}-2\varDelta z+\left(1+\omega_{a}\right)=0,\qquad1\leqslant a\leqslant L,
\end{equation}
We can easily verify that the polynomial $p_{a}(z)$ are SI for each
value of $a$. They also satisfy the relations $p_{a}(z)=z^{L}p(\omega_{a}/z)$,
$1\leqslant a\leqslant L$, which means that the solutions of (\ref{Bethe2})
have the general form $X=\left\{ \zeta,\omega_{a}/\zeta\right\} $
for $\zeta$ any zero of $p_{a}(z)$. In \cite{VieiraLimaSantos2015}
the distribution of the zeros of the polynomials $p_{a}(z)$ was analyzed
through an application of Vieira's Theorem \ref{TheoremVieira}. It
was shown that the exact behaviour of the zeros of the polynomials
$p_{a}(z)$, for each $a$, depends on two critical values of $\varDelta$,
namely, 
\begin{equation}
\varDelta_{a}^{(1)}=\tfrac{1}{2}\left|\omega_{a}+1\right|,\qquad\text{and}\qquad\varDelta_{a}^{(2)}=\tfrac{1}{2}\left(\tfrac{L}{L-2}\right)\left|\omega_{a}+1\right|,
\end{equation}
 as follows: if $|\varDelta|\leqslant\varDelta_{a}^{(1)}$, then all
the zeros of $p_{a}(z)$ are on $\mathbb{S}$; if $|\varDelta|\geqslant\varDelta_{a}^{(2)}$,
then all the zeros of $p_{a}(z)$ but two are on $\mathbb{S}$; (see
\cite{VieiraLimaSantos2015} for the case $\varDelta_{a}^{(1)}<|\varDelta|<\varDelta_{a}^{(2)}$
and more details).

Finally, we highlight that the polynomial $p_{a}(z)$ becomes a Salem
polynomial for $a=L$ and integer values of $\varDelta$. This was
one of the first appearances of Salem polynomials in physics. 

\subsection{Orthogonal polynomials\label{Section-Orth}}

An infinite sequence $\mathcal{P}=\left\{ P_{n}(z)\right\} _{n\in\mathbb{N}}$
of polynomials $P_{n}(z)$ of degree $n$ is said to be an \emph{orthogonal
polynomial sequence} on the interval $(l,r)$ of the real line if
there exists a function $w(x)$, positive in $(l,r)\in\mathbb{R}$,
such that, 
\begin{equation}
\int_{l}^{r}P_{m}(z)P_{n}(z)w(z)dz=\begin{cases}
K_{n}, & m=n,\\
0, & m\neq n,
\end{cases}\qquad m,n\in\mathbb{N},
\end{equation}
where $K_{0}$, $K_{1}$ etc. are positive numbers. Orthogonal polynomial
sequences on the real line have many interesting and important properties
--- see \cite{Chihara2011}.

\begin{table}[H]
\centering%
\begin{tabular}{ll}
\hline 
Hermite polynomials & Möbius-transformed Hermite polynomials\tabularnewline
\hline 
$H_{0}(z)=1$ & $\mathcal{H}_{0}(z)=1$\tabularnewline
$H_{1}(z)=2z$ & $\mathcal{H}_{1}(z)=-2i-2iz$\tabularnewline
$H_{2}(z)=-2+4z^{2}$ & $\mathcal{H}_{2}(z)=-6-4z-6z^{2}$\tabularnewline
$H_{3}(z)=-12z+8z^{3}$ & $\mathcal{H}_{3}(z)=20i+12iz+12iz^{2}+20iz^{3}$\tabularnewline
$H_{4}(z)=12-48z^{2}+16z^{4}$ & $\mathcal{H}_{4}(z)=76+16z+72z^{2}+16z^{3}+76z^{4}$\tabularnewline
\hline 
\end{tabular}

\caption{Hermite and Möbius-transformed Hermite polynomials, up to $4$th degree.}
\label{TableOrth}
\end{table}

Very recently, Vieira \& Botta \cite{VieiraBotta2018A,VieiraBotta2018}
studied the action of Möbius transformations over orthogonal polynomial
sequences on the real line. In particular, they showed that the infinite
sequence $\mathcal{T}=\left\{ T_{n}(z)\right\} _{n\in\mathbb{N}}$
of the Möbius-transformed polynomials $T_{n}(z)=(z-1)^{n}P_{n}\left(W(z)\right)$,
where $W(z)=-i(z+1)/(z-1)$, is an SR and/or SI polynomial sequence
with all their zeros on the unit circle $\mathbb{S}$ --- see Table
\ref{TableOrth} for an example. We highlight that the polynomials
$T_{n}(z)\in\mathcal{T}$ also have properties similar to the original
polynomials $P_{n}(z)\in\mathcal{P}$ as, for instance, they satisfy
a type of orthogonality condition on the unit circle and a three-term
recurrence relation, their zeros lie all on $\mathbb{S}$ and are
simple, for $n\geqslant1$ the zeros of $T_{n}(z)$ interlaces with
those of $T_{n+1}(z)$ and so on --- see \cite{VieiraBotta2018A,VieiraBotta2018}
for more details.

\section{Conclusions}

In this work, we reviewed the theory of self-conjugate, self-reciprocal
and self-inversive polynomials. We discussed their main properties,
how they are related to each other, the main theorems regarding the
distribution of their zeros and some applications of these polynomials
both in physics and mathematics. We hope that this short review suits
for a compact introduction of the subject, paving the way for further
developments in this interesting field of research.

\section*{Acknowledgements}

We thanks the editorial staff for all the support during the publishing
process and also the \emph{Coordination for the Improvement of Higher
Education} (CAPES).

\bibliographystyle{elsarticle-num}
\bibliography{PolySym}

\end{document}